\documentclass[twoside]{article}

%
\usepackage[accepted]{aistats2022_preprint}
%



\bibliographystyle{apalike}

\usepackage[latin1]{inputenc}
\usepackage{amssymb,dsfont,amsmath,amsthm,mathtools}
\usepackage{hyperref}
\usepackage{graphicx}
\usepackage{notations, mystyle}

\def\ones{\mathds{1}}

\usepackage{float}
\usepackage{algorithm}
\usepackage{algorithmic}

\setlength\belowcaptionskip{0.8em}

\begin{document}

%
\runningtitle{Faster Unbalanced Optimal Transport: Translation invariant Sinkhorn and 1-D Frank-Wolfe}

%

\twocolumn[

\aistatstitle{Faster Unbalanced Optimal Transport: \\ Translation invariant Sinkhorn and 1-D Frank-Wolfe}

\aistatsauthor{Thibault Sejourne \And Francois-Xavier Vialard \And Gabriel Peyre}

\aistatsaddress{DMA, ENS, PSL \And  LIGM, UPEM \And CNRS, DMA, ENS, PSL} ]


\begin{abstract}
	Unbalanced optimal transport (UOT) extends optimal transport (OT) to take into account mass variations to compare distributions.
	This is crucial to make OT successful in ML applications, making it robust to data normalization and outliers. 
	The baseline algorithm is Sinkhorn, but its convergence speed might be significantly slower for UOT than for OT. 
	In this work, we identify the cause for this deficiency, namely the lack of a global normalization of the iterates, which equivalently corresponds to a translation of the dual OT potentials.   
	Our first contribution leverages this idea to develop a provably accelerated Sinkhorn algorithm (coined ''translation invariant Sinkhorn'') for UOT, bridging the computational gap with OT.
	Our second contribution focusses on 1-D UOT and proposes a Frank-Wolfe solver applied to this translation invariant formulation. The linear oracle of each steps amounts to solving a 1-D OT problems, resulting in a linear time complexity per iteration.
	Our last contribution extends this method to the computation of UOT barycenter of 1-D measures.
	Numerical simulations showcase the convergence speed improvement brought by these three approaches.
\end{abstract}

\section{Introduction}
\label{sec-intro}

\paragraph{Optimal transport in ML.}
Optimal Transport (OT) is now used extensively to solve various ML problems. 
For probability vectors $(\al,\be) \in \RR_+^N \times\RR_+^M$, $\sum_i \al_i = \sum_j \be_j=1$, a cost matrix $\C \in \RR^{N\times M}$, it computes a coupling matrix $\pi\in\RR^{N\times M}$ solving
\begin{align*}
	\OT(\al,\be) \triangleq \inf_{\pi\geq 0,\, \pi_1=\al,\, \pi_2=\be} \dotp{\pi}{\C} = \sum_{i,j} \pi_{i,j}\C_{i,j}
\end{align*}
where $(\pi_1,\pi_2) \triangleq (\pi\ones,\pi^\top\ones)$ are the marginals of the coupling $\pi$.
%
%
The optimal transport matrix $\pi$ can be used to perform for instance domain adaptation~\cite{courty2014domain} and differentiable sorting~\cite{cuturi2019differentiable}.
If the cost is of the form $\C_{i,j}=d(x_i,x_j)^p$ where $d$ is some distance, then $\OT(\al,\be)^{1/p}$ is itself a distance between probability vectors with many favorable geometrical properties~\cite{peyre2019computational}. This distance is used for supervised learning over histograms~\cite{2015-Frogner} or unsupervised learning of generative models~\cite{WassersteinGAN}.


\paragraph{Csiszar divergences.}

The simplest formulation of UOT penalizes the discrepancy between $(\pi_1,\pi_2)$ and $(\al,\be)$ using Csiszar divergences~\cite{csiszar1967information}.
We consider an ``entropy'' function $\phi:\RR_+\rightarrow\RR_+$ which is positive, convex, lower semi-continuous and such that $\phi(1)=0$. We define $\phi^\prime_\infty\triangleq\lim_{x\rightarrow\infty} \tfrac{\phi(x)}{x}$.
Its associated Csiszar divergence reads, for $(\mu,\nu)\in\RR_+^N$
\begin{align}
	\D_\phi(\mu|\nu)\triangleq \sum_{\nu_i>0} \phi(\tfrac{\mu_i}{\nu_i})\nu_i + \phi^\prime_\infty\sum_{\nu_i=0}\mu_i.
\end{align}
A popular instance is the Kullback-Leibler divergence ($\KL$) where $\phi(x)=x\log x - x + 1$ and $\phi^\prime_\infty=+\infty$, such that $\KL(\mu|\nu)\triangleq \sum_i \log(\tfrac{\mu_i}{\nu_i})\mu_i - m(\mu) + m(\nu)$ when $(\nu_i=0)\Rightarrow(\mu_i=0)$, and $\KL(\mu|\nu)=+\infty$ otherwise.

\paragraph{Unbalanced optimal transport.}
Unbalanced optimal transport (UOT) is a generalization of OT which relaxes the constraint that $(\al,\be)$ must be probability vectors. 
Defining $m(\al)\triangleq\sum_i\al_i$ the mass of measure, we can have $m(\al)\neq m(\be)$.
%
This generalization is crucial to cope with outlier to perform robust learning~\cite{mukherjee2021outlier, balaji2020robust} and avoid to perform some a priori normalization of datasets~\cite{lee2019parallel}.
Unbalanced OT enables mass creation and destruction, which is important for instance to model growth in cell populations~\cite{schiebinger2017reconstruction}.
%
%
%
We refer to~\cite{liero2015optimal} for a thorough presentation of UOT. 
To derive efficient algorithms, following~\cite{chizat2016scaling}, we consider an entropic-regularized problem 
\begin{align}\label{eq-primal-uot-kl}
\UOT(\al,\be) \triangleq \inf_{\pi\geq 0} &\dotp{\pi}{\C} + \epsilon\KL(\pi|\al\otimes\be) \\
&+ \D_{\phi_1}(\pi_1 | \al) + \D_{\phi_2}(\pi_2 | \be)\,.\nonumber
\end{align}
Here $\KL(\pi|\al\otimes\be)$ is the Kullback-Leibler divergence between $\pi$ and $\al \otimes \be= (\al_i \be_j)_{i,j}$.
The original (unregularized) formulation of UOT~\cite{liero2015optimal} corresponds to the special case $\epsilon=0$.
A popular case uses $\D_\phi=\rho\KL$ where $\rho>0$ controls the tradeoff between mass transportation and mass creation/destruction. Balanced OT is retrieved in the limit $\rho\rightarrow +\infty$, while when $\rho \rightarrow 0$ $\UOT(\al,\be)/\rho$ converges to the Hellinger distance (no transport). 
The dual problem to~\eqref{eq-primal-uot-kl} reads $\UOT(\al,\be) = \sup_{(\f,\g)} \Ff_\epsilon(\f,\g)$, where
\begin{align}
\Ff_\epsilon(\f,\g) \triangleq &\dotp{\al}{-\phi_1^*(-\f)}
+ \dotp{\be}{-\phi_2^*(-\g)}\nonumber\\
&-\epsilon\dotp{\al\otimes\be}{e^{\tfrac{\f\oplus\g - \C}{\epsilon}} - 1}. \label{eq-dual-uot-kl}
\end{align}
Here $(\f,\g)$ are vectors in $\RR^N\times\RR^M$, $\phi^*$ is the Legendre transform of $\phi$ and we used the shorthand notations
$\f\oplus\g - \C \triangleq (\f_i+\g_j-\C_{i,j})_{i,j} \in \RR^{N \times M}, \phi_1^*(-\f) \triangleq (\phi_1^*(-\f_i))_i \in \RR^N$.
When $\epsilon=0$ the last term in~\eqref{eq-dual-uot-kl} becomes the constraint $\f\oplus\g\leq\C$.


%
%

\paragraph{Sinkhorn's algorithm and its limitation for UOT.}

Problem~\eqref{eq-primal-uot-kl} can be solved using a generalization of Sinkhorn's algorithm, which is the method of choice to solve large scale ML problem with balanced OT~\cite{CuturiSinkhorn}, since it enjoys easily parallelizable computation which streams well on GPUs~\cite{pham2020unbalanced}.
Following~\cite{chizat2016scaling, sejourne2019sinkhorn}, Sinkhorn algorithm maximizes the dual problem~\eqref{eq-dual-uot-kl} by an alternate maximization on the two variables. 
In sharp contrast with balanced OT, UOT Sinkhorn algorithm might converge slowly, even if $\epsilon$ is large.
For instance when $\D_\phi=\rho\KL$, Sinkhorn converges linearly at a rate $(1+\tfrac{\epsilon}{\rho})^{-1}$~\cite{chizat2016scaling}, which is close to $1$ and thus slow when $\epsilon\ll\rho$.
One of the main goal of this paper is to alleviate this issue by introducing a \emph{translation invariant} formulation of the dual problem together with variants of the initial Sinkhorn's iterations which enjoy better convergence rates.

%

\paragraph{Translation invariant formulations.}
The balanced OT problem corresponds to using $\phi_1(x)=\phi_2(x)=\iota_{\{1\}}(x)$ (i.e. $\phi(1)=0$ and $\phi(x)=+\infty$ otherwise), such that $\phi^*(x)=x$.
In this case $\Ff_\epsilon$ reads
\begin{align*}
\Ff_\epsilon(\f,\g) = \dotp{\al}{\f}
+ \dotp{\be}{\g}
-\epsilon\dotp{\al\otimes\be}{e^{\tfrac{\f\oplus\g - \C}{\epsilon}} - 1}. 
\end{align*}
A key property is that for any constant translation $\la\in\RR$, $\Ff_\epsilon(\f+\la,\g-\la)=\Ff_\epsilon(\f,\g)$ (because $\phi^*$ is linear), while this does not hold in general for UOT.
In particular, optimal $(\f^\star,\g^\star)$ for balanced OT are only unique up to such translation, while for $\UOT$ with strictly convex $\phi^*$ (hessian $\phi^{*\prime\prime}>0$), the dual problem has a unique pair of maximizers. 

We emphasize that this lack of translation invariance is what makes UOT slow with the following example.
Let $\f^\star$ minimize $\Ff_\epsilon$.
If one initializes Sinkhorn with $\f^\star + \tau$ for some $\tau\in\RR$, $\UOT$-$\KL$ Sinkhorn iterates read $\f_t=\f^\star + (\tfrac{\rho}{\epsilon + \rho})^{2t}\tau$.
Thus iterates are sensitive to translations, and the error $(\tfrac{\rho}{\epsilon + \rho})^{2t}\tau$ decays slowly when $\epsilon\ll\rho$.

We solve this issue by explicitly dealing with the translation parameter using an \emph{overparameterized} dual functional $\Gg_\epsilon$ and its associated invariant functional~$\Hh_\epsilon$
\begin{align}
\Gg_\epsilon(\Bf,\Bg,\la)&\triangleq \Ff_\epsilon(\Bf+\la,\, \Bg-\la),\label{eq-def-g-func}\\
\Hh_\epsilon(\Bf,\Bg)&\triangleq\sup_{\la\in\RR}\Gg_\epsilon(\Bf,\Bg,\la).\label{eq-def-h-func}
\end{align}
Note that maximizing $\Ff_\epsilon$, $\Gg_\epsilon$ or $\Hh_\epsilon$ yields the same value $\UOT(\al,\be)$ and one can switch between the maximizers using 
\begin{align}
	(f,g) = (\Bf + \la^\star(\Bf,\Bg),\Bg - \la^\star(\Bf,\Bg)) \\
 	\qwhereq
	\la^\star(\Bf,\Bg) \triangleq \argmax \Gg_\epsilon(\Bf,\Bg,\cdot). \label{eq:lambdastar}
\end{align}
By construction, one has $\Hh_\epsilon(\Bf+\la,\Bg-\la)=\Hh_\epsilon(\Bf,\Bg)$, making the functional \emph{translation invariant}.
%
%
When $\epsilon=0$ we will write $(\Gg_0,\Hh_0)$.

All our contributions design efficient UOT solvers by operating on the functionals $\Gg_\epsilon$ and $\Hh_\epsilon$ instead of $\Ff_\epsilon$.
In particular, in Section~\ref{sec-sinkhorn} we define the associated $\Gg_\epsilon$-Sinkhorn and $\Hh_\epsilon$-Sinkhorn  which performs alternate maximization on these functionals.

%

\paragraph{Solving Unregularized UOT.}
Some previous works directly adress the unregularized case $\epsilon=0$ and some specific entropy functionals.
An alternate minimization scheme is proposed in~\cite{bauer2021square} for the special case of KL divergences, but without quantitative rates of convergence.  
In the case of a quadratic divergence $\phi(x)=\rho (x-1)^2$, it is possible to compute the whole path of solutions for varying $\rho$ using LARS algorithm~\cite{chapel2021unbalanced}.
Primal-dual approaches are possible for the Wasserstein-1 case by leveraging a generalized Beckmann flow formulation~\cite{schmitzer2019framework, lee2019parallel}.
In this specific case $\epsilon=0$ and for 1-D problems, we take a different route in Section~\ref{sec-fw} by applying Frank-Wolfe algorithm on $\Hh_0$, which leads to a $O(N)$ approximate algorithm.

%
%
%

\paragraph{Solving 1-D (U)OT problems.}
1-D balanced OT is straightforward to solve because the optimal transport plan is monotonic, and is thus solvable in $O(N)$ operations once the input points supporting the distributions have been sorted (which requires $O(N \log N)$ operations). 
This is also useful to define losses in higher dimension using the sliced Wasserstein distance~\cite{bonneel2015sliced,rabin2011wasserstein}, which integrates 1-D OT problems.
%
%
%
In the specific case where $\D_\phi$ is the total variation, $\phi(x)=|x-1|$, it is possible to develop an in $O(N\log(N)^2)$ network flow solver when $\C$ represents a tree metric (so this applies in particular for 1-D problem). 
In~\cite{bonneel2019spot} an approximation is performed by considering only transport plan defining injections between the two distributions, and a $O(NM)$ algorithm is detailed. 
%
%
%
%
To the best of our knowledge, there is no general method to address 1-D UOT, and we detail in Section~\ref{sec-fw} an efficient linear time solver which applies for smooth settings.

\paragraph{Wasserstein barycenters.}
Balanced OT barycenters, as defined in~\cite{agueh-2011}, enable to perform geometric averaging of probability distribution, and finds numerous application in ML and imaging~\cite{rabin2011wasserstein,solomon2015convolutional}.
It is however challenging to solve because the support of the barycenter is apriori unknown. Its exact computation requires the solution of a multimarginal $\OT$ problems~\cite{carlier2003class, gangbo1998optimal}, it has a polynomial complexity~\cite{altschuler2021wasserstein} but does not scale to large input distributions. 
In low dimension, one can discretize the barycenter support and use standard solvers such as Frank-Wolfe methods~\cite{luise2019sinkhorn}, entropic regularization~\cite{cuturi2014fast,janati2020debiased}, interior point methods~\cite{ge2019interior} and stochastic gradient descent~\cite{li2015generative}.
These approaches could be generalized to compute UOT barycenters.
In 1-D, computing balanced OT can be done in $O(N)$ operations by operating once the support of the distributions is sorted \cite{carlier2003class, bach2019submodular}.
This approach however does not generalize to UOT, and we detail in Section~\ref{sec-barycenter} an extension of our F-W solver to compute in $O(N)$ operations an approximation of the barycenter. 
%
%
%

\paragraph{Contributions.}
%
%
Our first main contribution is the derivation in Section~\ref{sec-sinkhorn} of the $\Gg_\epsilon$-Sinkhorn algorithm (which can be applied for any divergence provided that $\phi^*$ is smooth) and $\Hh_\epsilon$-Sinkhorn algorithm (which is restricted to the KL divergence). 
%
%
%
%
We provide empirical evidence that they converge faster than the standard $\Ff_\epsilon$-Sinkhorn algorithm: when $\epsilon\leq\rho$ for $\Gg_\epsilon$, and for any $(\epsilon,\rho)$ for $\Hh_\epsilon$.
We prove that $\Hh_\epsilon$ iterates converge faster than fo $\Ff_\epsilon$ in Theorem~\ref{thm-rate-cv-h-sink}.
Section~\ref{sec-fw} details our second contribution, which is an efficient linear time approximate 1-D UOT applying Frank-Wolfe iterations to $\Hh_0$.
To the best of our knowledge, it is the first proposal which applies FW to the UOT dual, because the properties of $\Hh_0$ allow to overcome the issue of the constraint set being unbounded.
Numerical experiments show that it compares favorably against the Sinkhorn algorithm when the goal is to approximate unregularized UOT. 
In Section~\ref{sec-barycenter} we extend the Frank-Wolfe approach to compute 1-D barycenters.
All those contributions are implemented in Python, and available at \url{https://github.com/thibsej/fast\_uot}.

\section{Properties of $\Gg_\epsilon$ and $\Hh_\epsilon$}
\label{sec-trans-inv}

We first give some important properties of the optimal translation parameter, which is important to link $\Gg_\epsilon$ and $\Hh_\epsilon$. We recall that $m(\al) \triangleq \sum_i\al_i$.

\begin{prop}
	\label{prop-equiv-mass-trans}
	Assume that $\phi_1^*$, $\phi_2^*$ are smooth and strictly convex.
	Then there exists a unique maximizer $\la^\star(\Bf,\Bg)$ of $\Gg_\epsilon(\Bf,\Bg,\cdot)$. Furthermore, $(\tilde\al,\tilde\be) = \nabla \Hh_0(\Bf,\Bg)$
	satisfy $\tal = \nabla\phi_1^*(-\Bf-\la^*(\Bf,\Bg))\al$, $\tbe=\nabla\phi_2^*(-\Bg+\la^*(\Bf,\Bg))\be)$ and $m(\tal)=m(\tbe)$.
\end{prop}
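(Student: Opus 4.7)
The plan is to exploit a key cancellation: in $\Ff_\epsilon(\Bf+\la, \Bg-\la)$, the exponential term (or, for $\epsilon=0$, the constraint $\f \oplus \g \leq \C$) involves $(\Bf+\la) \oplus (\Bg-\la) = \Bf \oplus \Bg$ and is therefore independent of $\la$. Consequently, up to a $\la$-free remainder $R(\Bf,\Bg)$,
\begin{align*}
\Gg_\epsilon(\Bf,\Bg,\la) = -\dotp{\al}{\phi_1^*(-\Bf-\la)} - \dotp{\be}{\phi_2^*(-\Bg+\la)} + R(\Bf,\Bg).
\end{align*}
Strict convexity of each $\phi_j^*$ makes each term $\la \mapsto -\phi_j^*(\cdot \mp \la)$ strictly concave, so (provided $\al$ or $\be$ is not identically zero) the map $\la \mapsto \Gg_\epsilon(\Bf,\Bg,\la)$ is strictly concave on $\RR$; this already gives uniqueness of any maximizer.

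For existence I would rely on coercivity. Under the standing entropy assumptions of the paper (in particular for $\phi = \KL$, where $\phi^*(x) = e^x-1$), $\Gg_\epsilon \to -\infty$ as $\la \to \pm\infty$ because one of the two $\phi_j^*$ terms blows up, so a unique maximizer $\la^\star(\Bf,\Bg)$ exists. This is the one step that is not automatic from strict convexity alone and is the main technical point to verify case by case for the chosen entropy.

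For the gradient of $\Hh_0$, I would apply Danskin's envelope theorem. The implicit function theorem, used on the first-order condition $\partial_\la \Gg_0(\Bf,\Bg,\la^\star)=0$ together with the strict concavity above (so that the second $\la$-derivative is nonzero), gives that $\la^\star$ is $C^1$ in $(\Bf,\Bg)$, hence
\begin{align*}
\nabla \Hh_0(\Bf,\Bg) = \nabla_{(\Bf,\Bg)} \Gg_0\bigl(\Bf,\Bg,\la^\star(\Bf,\Bg)\bigr).
\end{align*}
A direct chain-rule computation of the partial derivatives of $\Gg_0$ in $\Bf_i$ and $\Bg_j$ yields exactly $\tal_i = \al_i \nabla\phi_1^*(-\Bf_i - \la^\star)$ and $\tbe_j = \be_j \nabla\phi_2^*(-\Bg_j + \la^\star)$. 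Finally, the mass equality is nothing but first-order optimality in $\la$: differentiating $\Gg_\epsilon$ in $\la$ and evaluating at $\la^\star$ gives
\begin{align*}
\sum_i \al_i \nabla\phi_1^*(-\Bf_i - \la^\star) = \sum_j \be_j \nabla\phi_2^*(-\Bg_j + \la^\star),
\end{align*}
which reads $m(\tal) = m(\tbe)$, closing the proof.
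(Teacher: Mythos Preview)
Your proposal is correct and follows essentially the same approach as the paper: existence of $\la^\star$ via coercivity of $\Gg_\epsilon(\Bf,\Bg,\cdot)$ (the paper invokes $\lim_{x\to\infty}\phi^*(x)=+\infty$ from \cite{liero2015optimal} rather than leaving it case-by-case), uniqueness via strict convexity of $\phi_i^*$, the gradient formulas via the envelope theorem, and the mass equality via the first-order optimality condition in $\la$. Your explicit observation of the cancellation $(\Bf+\la)\oplus(\Bg-\la)=\Bf\oplus\Bg$ and the implicit-function-theorem justification for applying Danskin are welcome additions, but the structure of the argument is the same.
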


\begin{proof}
	From~\cite{liero2015optimal} we have that $\lim_{x\rightarrow\infty} \phi^*(x) = +\infty$.
	Thus for any $(\Bf,\Bg)$, $\Gg_\epsilon(\Bf,\Bg,\la)\rightarrow-\infty$ when $\la\rightarrow\pm\infty$, i.e. $\Gg_\epsilon$ is coercive in $\la$.
	It means that we have compactness, and the maximum is attained in $\RR$.
	Uniqueness is given by the strict convexity of $\phi_i^*$.
	The expression of  $(\tal,\tbe)$ follows by applying the enveloppe theorem since $\phi_i^*$ are smooth. 
	Concerning the mass equality, the first order optimality condition of $\Gg_\epsilon$ in $\la$ reads $\dotp{\al}{\nabla\phi_1^*(-\Bf-\la)} = \dotp{\be}{\nabla\phi_2^*(-\Bg+\la)}$.
	Thus by definition of $(\tal,\tbe)$, this condition is rewritten as $\dotp{\tal}{1} = \dotp{\tbe}{1}$, meaning that $m(\tal) = m(\tbe)$.
\end{proof}

%
%
%

\paragraph{Closed forms for KL.}
The case $\phi_i(x)=\rho_i(x\log x -x+1)$ and $\phi_i^*(x)=\rho_i(e^{-x/\rho_i} - 1)$ (corresponding to penalties $\rho_i\KL$, where one can have $\rho_1\neq\rho_2$) enjoys simple closed form expression. 
We start with the property that if we fix $(\Bf,\Bg)$, then $\la^\star$ can be computed explicitly.
\begin{prop}
	\label{prop-kl-opt-trans}
	One has 
	\eql{\label{eq-opt-trans-kl}
		\la^\star(\Bf,\Bg) = \tfrac{\rho_1\rho_2}{\rho_1 + \rho_2} \log\Big[\frac{\dotp{\al}{e^{-\Bf / \rho_1}}}{ \dotp{\be}{e^{-\Bg / \rho_2}}}\Big].
	}
\end{prop}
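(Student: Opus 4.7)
The plan is to verify this closed form by directly solving the first-order optimality condition of $\Gg_\epsilon$ in the translation variable $\la$, which by Proposition~\ref{prop-equiv-mass-trans} has a unique maximizer since the KL entropy gives $\phi_i^*(y) = \rho_i(e^{y/\rho_i} - 1)$ which is smooth and strictly convex.

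First I would observe the key cancellation in $\Gg_\epsilon$: since $(\Bf+\la) \oplus (\Bg-\la) = \Bf \oplus \Bg$, the entropic term $-\epsilon\dotp{\al\otimes\be}{e^{(\f\oplus\g-\C)/\epsilon}-1}$ in $\Ff_\epsilon$ is \emph{independent of $\la$}. So the $\la$-dependence of $\Gg_\epsilon$ comes only from the two marginal terms. Substituting the KL Legendre transform gives
\begin{align*}
	\Gg_\epsilon(\Bf,\Bg,\la) = \sum_i \al_i\rho_1\bigl(1 - e^{-(\Bf_i+\la)/\rho_1}\bigr) + \sum_j \be_j\rho_2\bigl(1 - e^{-(\Bg_j-\la)/\rho_2}\bigr) + R(\Bf,\Bg),
\end{align*}
where $R$ collects the $\la$-independent terms.

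Next I would differentiate in $\la$ to obtain $\partial_\la \Gg_\epsilon = \dotp{\al}{e^{-(\Bf+\la)/\rho_1}} - \dotp{\be}{e^{-(\Bg-\la)/\rho_2}}$, which matches the mass-balance condition of Proposition~\ref{prop-equiv-mass-trans} applied to KL (where $\nabla\phi_i^*$ is the exponential). Factoring out the $\la$-dependence gives
\begin{align*}
	e^{-\la/\rho_1}\dotp{\al}{e^{-\Bf/\rho_1}} = e^{\la/\rho_2}\dotp{\be}{e^{-\Bg/\rho_2}},
\end{align*}
and taking logarithms yields $\la(\tfrac{1}{\rho_1}+\tfrac{1}{\rho_2}) = \log\bigl(\dotp{\al}{e^{-\Bf/\rho_1}}/\dotp{\be}{e^{-\Bg/\rho_2}}\bigr)$, which rearranges to~\eqref{eq-opt-trans-kl}. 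Uniqueness of the solution was already established in Proposition~\ref{prop-equiv-mass-trans}, so solving the first-order condition is sufficient.

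There is no real obstacle here: the proof is a direct computation once one notices the $\la$-invariance of the entropic coupling term. The only subtlety worth flagging is ensuring that $\dotp{\al}{e^{-\Bf/\rho_1}}$ and $\dotp{\be}{e^{-\Bg/\rho_2}}$ are strictly positive so the logarithm is well-defined, which holds whenever $\al,\be$ are nonzero measures with finite-valued potentials.
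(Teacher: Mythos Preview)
Your proof is correct and follows essentially the same route as the paper: write the first-order optimality condition $\dotp{\al}{e^{-(\Bf+\la)/\rho_1}} = \dotp{\be}{e^{-(\Bg-\la)/\rho_2}}$, factor out the $\la$-dependence, take logarithms, and solve. Your explicit remark that the entropic term is $\la$-independent and your check that the logarithm is well-defined are useful additions, but the argument is otherwise identical to the paper's.
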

\begin{proof}
	The optimality of Equation~\ref{eq-def-g-func} in $\la$ reads $\dotp{\al}{\nabla\phi_1^*(-\Bf-\la)} = \dotp{\be}{\nabla\phi_2^*(-\Bg+\la)}$,
	which for $\rho_i\KL$ is $\dotp{\al}{e^{-\tfrac{\Bf+\la}{\rho_1}}} = \dotp{\be}{e^{-\tfrac{\Bg-\la}{\rho_2}}}$. 
	Solving this equation in $\la$ yields Equation~\eqref{eq-opt-trans-kl}.
\end{proof}
Note that Equation~\eqref{eq-opt-trans-kl} can be computed in $O(N)$ time, and stabilized via a logsumexp reduction.
Equation~\eqref{eq-opt-trans-kl} is useful to rewrite $\Hh_\epsilon$ explicitly.
It yields a formulation which is new to the best of our knowledge.

\begin{prop}
	Setting $\tau_1=\tfrac{\rho_1}{\rho_1 + \rho_2}$ and $\tau_2=\tfrac{\rho_2}{\rho_1 + \rho_2}$, one has
	\begin{align}
	\Hh_\epsilon&(\Bf,\Bg) =  \rho_1 m(\al) + \rho_2 m(\be) -\epsilon\dotp{\al\otimes\be}{e^{\tfrac{\Bf\oplus\Bg - \C}{\epsilon}} - 1}\nonumber\\
	&- (\rho_1 + \rho_2)\Big(\dotp{\al}{ e^{-\Bf / \rho_1} }\Big)^{\tau_1} \Big(\dotp{\be}{ e^{-\Bg / \rho_2} }\Big)^{\tau_2}.\label{eq-dual-form-hell}
	\end{align}
	In particular when $\rho_1 = \rho_2=\rho$ and $\epsilon=0$,
	\begin{align*}
	\Hh_0(\Bf,\Bg) = \rho \Big[m(\al) +  m(\be)- 2\sqrt{   \dotp{\al}{ e^{-\Bf/\rho} }  \dotp{\be}{ e^{-\Bg/\rho} }   }\Big].
	\end{align*}
\end{prop}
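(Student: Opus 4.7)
The plan is to directly substitute the explicit form of $\phi_i^*$ for the KL case into $\Gg_\epsilon$, evaluate at the optimal translation $\la^\star$ from Proposition~\ref{prop-kl-opt-trans}, and simplify. The key observation is that the entropic term in $\Ff_\epsilon$ is unchanged under the reparametrization $(\f,\g) \mapsto (\Bf+\la,\Bg-\la)$ since $(\Bf+\la)\oplus(\Bg-\la) = \Bf\oplus\Bg$, so only the marginal penalty terms depend on $\la$.

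First I would compute $\phi_i^*(y) = \rho_i(e^{y/\rho_i} - 1)$ from $\phi_i(x) = \rho_i(x\log x - x + 1)$, giving $-\phi_i^*(-\f) = \rho_i(1 - e^{-\f/\rho_i})$. Substituting into $\Ff_\epsilon$ and then $\Gg_\epsilon(\Bf,\Bg,\la)=\Ff_\epsilon(\Bf+\la,\Bg-\la)$ yields
\begin{align*}
\Gg_\epsilon(\Bf,\Bg,\la) &= \rho_1 m(\al) + \rho_2 m(\be) - \epsilon\dotp{\al\otimes\be}{e^{(\Bf\oplus\Bg - \C)/\epsilon} - 1} \\
&\quad - e^{-\la/\rho_1}\rho_1 A - e^{\la/\rho_2}\rho_2 B,
\end{align*}
where I introduce the shorthand $A \triangleq \dotp{\al}{e^{-\Bf/\rho_1}}$ and $B \triangleq \dotp{\be}{e^{-\Bg/\rho_2}}$.

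Next I would plug in $\la^\star = \tfrac{\rho_1\rho_2}{\rho_1+\rho_2}\log(A/B)$ from Proposition~\ref{prop-kl-opt-trans}. A short calculation gives $e^{-\la^\star/\rho_1} = (A/B)^{-\tau_2}$ and $e^{\la^\star/\rho_2} = (A/B)^{\tau_1}$, so the two $\la$-dependent terms become
\[
-\rho_1 A^{1-\tau_2}B^{\tau_2} - \rho_2 A^{\tau_1}B^{1-\tau_1} = -(\rho_1+\rho_2)\,A^{\tau_1}B^{\tau_2},
\]
using $1-\tau_2 = \tau_1$ and $1-\tau_1 = \tau_2$. This directly yields the claimed formula~\eqref{eq-dual-form-hell} for $\Hh_\epsilon$.

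Finally, specializing to $\rho_1=\rho_2=\rho$ and $\epsilon=0$ gives $\tau_1=\tau_2=1/2$ and $\rho_1+\rho_2=2\rho$, and the entropic term vanishes, which immediately produces the advertised expression $\Hh_0(\Bf,\Bg)=\rho[m(\al)+m(\be)-2\sqrt{AB}]$. There is no real obstacle here beyond careful bookkeeping of exponents; the only step that might look delicate is verifying $1-\tau_2=\tau_1$ which collapses the two terms into one, but this is immediate from the definition $\tau_i=\rho_i/(\rho_1+\rho_2)$.
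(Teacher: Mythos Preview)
Your proof is correct and follows essentially the same approach as the paper: write $\Ff_\epsilon$ explicitly in the KL case, note that the entropic term is invariant under the translation $(\Bf+\la,\Bg-\la)$, substitute the closed-form $\la^\star$ from Proposition~\ref{prop-kl-opt-trans}, and simplify using $\tau_1+\tau_2=1$. The paper's version carries out the same substitution on $\dotp{\al}{e^{-(\Bf+\la^\star)/\rho_1}}$ and $\dotp{\be}{e^{-(\Bg-\la^\star)/\rho_2}}$ directly, arriving at the identical expression $A^{\tau_1}B^{\tau_2}$ for each; your introduction of the shorthand $A,B$ and the explicit computation of $e^{\pm\la^\star/\rho_i}$ is a minor repackaging of the same calculation.
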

%


\section{Translation invariant Sinkhorn}
\label{sec-sinkhorn}

We propose in this section two variants of the Sinkhorn algorithm based on an alternate maximization on $\Gg_\epsilon$ and $\Hh_\epsilon$.
%

%
%
\paragraph{$\Ff$-Sinkhorn (the original one).}
Sinkhorn's algorithm reads, for any initialization $\f_0$, 
\begin{align*}
\g_{t+1}(y) &= -\aprox_{\phi^*_1}(-\Smin{\al}{\epsilon}(\C(\cdot,y) - \f_{t})),\\
\f_{t+1}(x) &= -\aprox_{\phi^*_2}(-\Smin{\be}{\epsilon}(\C(x,\cdot) - \g_{t+1})),
\end{align*}
where the softmin is $\Smin{\al}{\epsilon}(\f) \triangleq -\epsilon\log\dotp{\al}{e^{-\f / \epsilon}}$, and the anisotropic prox reads 
\begin{align}
	\aprox_{\phi^*}(x)\triangleq\arg\min_{y\in\RR}\epsilon e^{\tfrac{x-y}{\epsilon}} + \phi^*(y).
\end{align}
We refer to \cite{sejourne2019sinkhorn} for more details.
For $\rho\KL$ we have $\aprox_{\phi^*}(x) = \tfrac{\rho}{\epsilon + \rho}x$.
The softmin and $\aprox_{\phi^*}$ are respectively $1$-contractive and $(1+\tfrac{\epsilon}{\rho})^{-1}$-contractive for the sup-norm $\norm{\cdot}_\infty$. 

\paragraph{$\Gg$-Sinkhorn.}
Alternate maximization on $\Gg_\epsilon$ reads
\begin{align*}
\Bg_{t+1}(y) &= -\aprox_{\phi^*_1}\big(-\Smin{\al}{\epsilon}\big(\C(\cdot,y) - \Bf_{t} - \la_{t}\big)\big), \\
\Bf_{t+1}(x) &= -\aprox_{\phi^*_2}\big(-\Smin{\be}{\epsilon}\big(\C(x,\cdot) - \Bg_{t+1} + \la_{t}\big)\big),\\
\la_{t+1} &= \la^\star(\Bf_{t+1},\Bg_{t+1}), 
\end{align*}
and the associated dual iterates are retrieved as $(\f_t,\g_t) \triangleq (\Bf_t+\la_t,  \Bg_t-\la_t)$.
%
%
For smooth $\phi^*_i$, standard results on alternate convex optimization ensure its convergence~\cite{tseng2001convergence}.
Note that the extra step to compute $\la_{t+1}$ has $O(N)$ complexity for $\KL$ (Equation~\eqref{eq-opt-trans-kl}).
For smooth $\phi^*$, computing $\la_{t+1}$ is a 1-D optimization problem whose gradient and hessian have $O(N)$ cost and converges in few iterations with a Newton method.


\paragraph{$\Hh$-Sinkhorn.}
In the following, we denote $\Psi_1(\Bg) \triangleq \argmax \Hh_\epsilon(\cdot,\Bg)$ and $\Psi_2(\Bf) \triangleq \argmax \Hh_\epsilon(\Bf,\cdot)$.
The $\Hh$-Sinkhorn's algorithm is the alternate minimization on $\Hh_\epsilon$, it thus reads 
$$
	\bar g_{t+1}=\Psi_{2}(\bar f_t), \: \bar f_{t+1}=\Psi_1(\bar g_{t+1}), 
$$
and the associated dual iterates are retrieved as $(\f_t,\g_t) \triangleq (\Bf_t+\la^\star(\Bf_t,\Bg_t),  \Bg_t-\la^\star(\Bf_t,\Bg_t))$.
Contrary to $\Gg$-Sinkhorn, $\Hh$-Sinkhorn inherits the invariance of $\Hh_\epsilon$: 
One has $\Psi_1(\Bf_t + \mu)=\Psi_1(\Bf_t) - \mu$ i.e. $\bar\g_{t+1}\rightarrow\bar\g_{t+1}-\mu$.

Computing $\Bf=\Psi_1(\Bg)$ for some fixed $\Bg$ requires to solve the equation in $\Bf$
\begin{align}\label{eq-optim-h-sink}
    e^{\Bf/\epsilon}\dotp{\be}{e^{(\Bg-\C)/\epsilon}}=\nabla\phi_1^*(-\Bf-\la^\star(\Bf,\Bg)).
\end{align}

For a generic divergence, without an explicit expression of $\la^\star$, there is a priori no closed form expression for $\f$, and one would need to re-sort to sub-iterations.
However, thanks to the closed form~\eqref{eq-opt-trans-kl} for $\KL$, the following proposition proved in the appendices shows that it can be computed in closed form.

\begin{prop}\label{prop-conv-h-sink}
	For fixed $(\Bf,\Bg)$, assuming for simplicity $\rho_1=\rho_2=\rho$, denoting $\xi \triangleq \tfrac{\epsilon}{\epsilon+2\rho}$, one has
	\begin{align*}
		\Psi_1(\Bf) = \hg + \xi \Smin{\be}{\rho}(\hg), \: \Psi_2(\Bg) = \hf + \xi  \Smin{\al}{\rho}(\hf) \\
		\text{where}\:
		\choice{
			\hg \triangleq \tfrac{\rho}{\rho+\epsilon}\Smin{\al}{\epsilon}(\C-\Bf)-\tfrac{1}{2}\tfrac{\epsilon}{\rho+\epsilon} \Smin{\al}{\rho}(\Bf),\\
			\hf \triangleq \tfrac{\rho}{\rho+\epsilon}\Smin{\be}{\epsilon}(\C-\Bg)-\tfrac{1}{2}\tfrac{\epsilon}{\rho+\epsilon} \Smin{\be}{\rho}(\Bg).		
		}
	\end{align*}
\end{prop}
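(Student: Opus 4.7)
The plan is to compute $\Psi_1(\Bg)=\argmax_{\Bf}\Hh_\epsilon(\Bf,\Bg)$ by unfolding the definition $\Hh_\epsilon(\Bf,\Bg)=\sup_\la\Ff_\epsilon(\Bf+\la,\Bg-\la)$ as a joint optimization in $(\Bf,\la)$. This separates a standard $\Ff$-Sinkhorn update (stationarity in $\Bf$) from the translation optimality condition of Proposition~\ref{prop-kl-opt-trans} (stationarity in $\la$); the resulting self-consistent equation in $\Bf$ then collapses to a scalar one via the translation equivariance of the softmin.

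Concretely, let $(\Bf^\star,\la^\star)$ jointly maximize $\Ff_\epsilon(\Bf+\la,\Bg-\la)$ at fixed $\Bg$. The stationarity in $\Bf$ is exactly the $\Ff$-Sinkhorn update for the first potential with $\g=\Bg-\la^\star$; for $\rho\KL$, where $\aprox_{\phi^\ast}(x)=\tfrac{\rho}{\rho+\epsilon}x$, this closed-forms to
\[
\Bf^\star+\la^\star=\tfrac{\rho}{\rho+\epsilon}\Smin{\be}{\epsilon}(\C-\Bg+\la^\star)=\tfrac{\rho}{\rho+\epsilon}\big(\Smin{\be}{\epsilon}(\C-\Bg)+\la^\star\big),
\]
using the translation identity $\Smin{\be}{\epsilon}(\cdot+c)=\Smin{\be}{\epsilon}(\cdot)+c$. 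Subtracting $\la^\star$ gives $\Bf^\star=\tfrac{\rho}{\rho+\epsilon}\Smin{\be}{\epsilon}(\C-\Bg)-\tfrac{\epsilon}{\rho+\epsilon}\la^\star$. Proposition~\ref{prop-kl-opt-trans} with $\rho_1=\rho_2=\rho$ rewrites $\la^\star=\tfrac{1}{2}\bigl(\Smin{\be}{\rho}(\Bg)-\Smin{\al}{\rho}(\Bf^\star)\bigr)$ (after converting the $\log\dotp{\cdot}{e^{-\cdot/\rho}}$ terms to softmins). Substituting and grouping yields the self-consistent equation
\[
\Bf^\star=\hf+\tfrac{1}{2}\tfrac{\epsilon}{\rho+\epsilon}\,\Smin{\al}{\rho}(\Bf^\star),
\]
with $\hf$ exactly as stated.

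The final step is to resolve this equation. The key observation is that $\Smin{\al}{\rho}(\Bf^\star)$ is a scalar, so $\Bf^\star-\hf=c\,\ones$ for some $c\in\RR$. Applying $\Smin{\al}{\rho}$ to both sides and using $\Smin{\al}{\rho}(\hf+c\ones)=c+\Smin{\al}{\rho}(\hf)$ collapses the fixed-point equation into the scalar linear equation $c=\tfrac{1}{2}\tfrac{\epsilon}{\rho+\epsilon}\bigl(c+\Smin{\al}{\rho}(\hf)\bigr)$, whose solution is $c=\tfrac{\epsilon}{2\rho+\epsilon}\Smin{\al}{\rho}(\hf)=\xi\,\Smin{\al}{\rho}(\hf)$, delivering the announced formula. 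The derivation for $\Psi_2$ is symmetric, exchanging the roles of $(\al,\Bf)$ and $(\be,\Bg)$.

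The main obstacle is essentially bookkeeping: carrying the translation $\la$ through the Sinkhorn update and the $\la^\star$ formula without sign or coefficient errors, so that $\hf$ emerges with exactly the prefactors $\tfrac{\rho}{\rho+\epsilon}$ and $-\tfrac{1}{2}\tfrac{\epsilon}{\rho+\epsilon}$ given in the statement. The conceptually clever step is the last one: what looks like an implicit fixed-point equation in $\RR^N$ is in fact a scalar equation in one unknown, because the $\Bf^\star$-dependence on the right-hand side enters only through the scalar summary $\Smin{\al}{\rho}(\Bf^\star)$ and its compatibility with translations.
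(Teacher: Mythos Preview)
Your proposal is correct and follows essentially the same route as the paper: both derive the first-order optimality condition in $\Bf$ with the explicit $\la^\star$ of Proposition~\ref{prop-kl-opt-trans} substituted in, land on a self-consistent equation of the form $\Bf^\star=\hf+\text{(const.)}\cdot\Smin{\al}{\rho}(\Bf^\star)$, and then observe that the implicit dependence is through a single scalar, reducing the problem to a linear equation in one unknown solved via the translation equivariance of the softmin. Your framing as a joint maximization in $(\Bf,\la)$ with two separate stationarity conditions is a mild repackaging of the paper's direct differentiation of $\Hh_\epsilon$, but the substance, the key scalar-reduction idea, and the final algebra are the same.
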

%


The following theorem shows that this algorithm enjoys a better convergence rate than $\Ff$-Sinkhorn.
It involves the Hilbert pseudo-norm $\norm{\f}_\star\triangleq\inf_{t\in\RR}\norm{\f+t}_\infty$ which is relevant here due to the translation invariance of the map $\Psi_1$.
A key property that $\Hh$-Sinkhorn inherits is the contractance rate of $\Smin{\al}{\epsilon}$ for $\norm{\f}_\star$, which we write
%
	$\norm{\Smin{\al}{\epsilon}(\C-\f) - \Smin{\al}{\epsilon}(\C-\g)}_\star \leq \kappa_\epsilon(\al)\norm{\f-\g}_\star$,
%
where $\kappa_\epsilon(\al)$ denotes the contraction rate.
Local and global estimation of $\kappa_\epsilon(\al)$ are detailed respectively in~\cite{knight2008sinkhorn} and~\cite{birkhoff1957extensions, franklin1989scaling}.
The latter estimate reads $\kappa_\epsilon(\al)\leq 1 - \tfrac{2}{1 + \eta}$, where $\eta=\exp(-\tfrac{1}{2\epsilon}\max_{i,j,k,l}(\C_{j,k} + \C_{i,l} - \C_{j,l} - \C_{i,k}))$.
Note that $\eta$ depends on $\al$ via its support.

\begin{thm}\label{thm-rate-cv-h-sink}
	Write $\Bf_0$ the initialization, $(\f_t,\g_t)$ the iterates of $\Hh$-Sinkhorn after the translation $\la^\star(\Bf_t,\Bg_t)$, and $(f^\star,\g^\star)$ the optimal dual solutions for $\Ff$.
	Defining $\bar\kappa \triangleq \kappa_\epsilon(\al)\kappa_\epsilon(\be) (1+\tfrac{\epsilon}{\rho})^{-2} < 1$,  one has
	\begin{align*}
		\norm{\f_t - \f^\star}_\infty + \norm{\g_t - \g^\star}_\infty \leq 2 \bar\kappa^t \norm{\Bf_0 - \f^\star}_\star.
	\end{align*}
\end{thm}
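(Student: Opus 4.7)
The plan is to analyze $\Hh$-Sinkhorn in the Hilbert pseudo-norm $\norm{\cdot}_\star$, which is the natural norm on the translation-invariant quotient on which $\Hh_\epsilon$ lives, and only at the end convert back to $\norm{\cdot}_\infty$. The two ingredients are the Birkhoff--Hilbert contraction of $\Smin{}{\epsilon}$ in $\norm{\cdot}_\star$ recalled in the excerpt, and the closed-form half-steps of Proposition~\ref{prop-conv-h-sink}, which decompose each update as a vector part plus a global scalar to which $\norm{\cdot}_\star$ is blind.

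First I would bound one half-step. Since $\Psi_2(\Bf) = \hg + \xi\Smin{\be}{\rho}(\hg)$ with $\Smin{\be}{\rho}(\hg)\in\RR$, and adding a scalar is the zero perturbation in $\norm{\cdot}_\star$, one has $\norm{\Psi_2(\Bf)-\Psi_2(\Bf')}_\star = \norm{\hg-\hg'}_\star$. Inside $\hg = \tfrac{\rho}{\rho+\epsilon}\Smin{\al}{\epsilon}(\C-\Bf) - \tfrac{1}{2}\tfrac{\epsilon}{\rho+\epsilon}\Smin{\al}{\rho}(\Bf)$ the second term is also a scalar, so only the $\Smin{\al}{\epsilon}$ piece survives in $\norm{\cdot}_\star$; applying the Birkhoff bound gives $\norm{\Psi_2(\Bf)-\Psi_2(\Bf')}_\star \leq \tfrac{\rho}{\rho+\epsilon}\kappa_\epsilon(\al)\norm{\Bf-\Bf'}_\star$, with a symmetric bound for $\Psi_1$. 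Composing the two half-steps yields the full-step contraction with rate $\bar\kappa=(1+\tfrac{\epsilon}{\rho})^{-2}\kappa_\epsilon(\al)\kappa_\epsilon(\be)$. Picking the fixed-point representative $(\Bf^\star,\Bg^\star)=(\f^\star,\g^\star)$---consistent because the $\Ff$-optimum already satisfies the mass-balance condition of Proposition~\ref{prop-equiv-mass-trans}, so $\la^\star(\f^\star,\g^\star)=0$---a Banach-type iteration then gives $\norm{\Bf_t-\f^\star}_\star \leq \bar\kappa^t\norm{\Bf_0-\f^\star}_\star$.

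The last step---and the main obstacle---is to upgrade this Hilbert bound into the stated $\norm{\cdot}_\infty$ bound on the canonical iterate $(\f_t,\g_t) = (\Bf_t+\la^\star(\Bf_t,\Bg_t), \Bg_t-\la^\star(\Bf_t,\Bg_t))$. While $\norm{\cdot}_\star \leq \norm{\cdot}_\infty$ is immediate, the reverse direction requires that the scalar ambiguity of $\Bf_t-\f^\star$ be cancelled exactly by the shift $\la^\star(\Bf_t,\Bg_t)$. Using the closed form~\eqref{eq-opt-trans-kl} (or, more intrinsically, the first-order mass-balance condition $m(\tal)=m(\tbe)$ of Proposition~\ref{prop-equiv-mass-trans} satisfied by both $(\f_t,\g_t)$ and $(\f^\star,\g^\star)$), one verifies that $\la^\star(\Bf_t,\Bg_t)$ coincides with the shift selecting the correct representative of the translate class of $\Bf_t$ relative to $\f^\star$; this gives $\norm{\f_t-\f^\star}_\infty \leq \bar\kappa^t\norm{\Bf_0-\f^\star}_\star$ and the symmetric bound for $\g_t$, adding to the factor $2$ in the theorem.
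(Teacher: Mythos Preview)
Your first two paragraphs match the paper's strategy exactly: the contraction in $\norm{\cdot}_\star$ follows because only the $\Smin{}{\epsilon}$ term in $\hg$ is non-scalar, so one gets $\norm{\Psi_1(\Bf)-\Psi_1(\Bf')}_\star\le \tfrac{\rho}{\rho+\epsilon}\kappa_\epsilon(\al)\norm{\Bf-\Bf'}_\star$ and hence $\norm{\Bf_t-\f^\star}_\star\le\bar\kappa^t\norm{\Bf_0-\f^\star}_\star$.

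The gap is in your last paragraph. You assert that $\la^\star(\Bf_t,\Bg_t)$ ``coincides with the shift selecting the correct representative of the translate class of $\Bf_t$ relative to $\f^\star$''. If this means the minimizer of $\la\mapsto\norm{\Bf_t+\la-\f^\star}_\infty$, it is simply false: $\la^\star$ is defined by the mass-balance equation $\dotp{\al}{e^{-(\Bf_t+\la)/\rho}}=\dotp{\be}{e^{-(\Bg_t-\la)/\rho}}$, whereas the Hilbert minimizer is $-\tfrac12(\max(\Bf_t-\f^\star)+\min(\Bf_t-\f^\star))$. These have no reason to agree, so you cannot conclude $\norm{\f_t-\f^\star}_\infty\le\norm{\Bf_t-\f^\star}_\star$ separately for each potential.

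The paper handles this differently: it introduces the coupled norm
\[
\norm{(\Bf,\Bg)}_{\star\star}\triangleq\min_{\la\in\RR}\bigl(\norm{\Bf+\la}_\infty+\norm{\Bg-\la}_\infty\bigr)
\]
and proves two lemmas. First, $\norm{(\Bf,\Psi_1(\Bf))-(\Bf',\Psi_1(\Bf'))}_{\star\star}\le 2\norm{\Bf-\Bf'}_\star$, which uses the translation covariance $\Psi_1(\Bf+\la)=\Psi_1(\Bf)-\la$ together with non-expansiveness of $\Psi_1$ in $\norm{\cdot}_\infty$ (the latter requires a separate computation showing the various constants sum to~$1$). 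Second, the map $\Phi:(\Bf,\Bg)\mapsto(\Bf+\la^\star,\Bg-\la^\star)$ is $1$-Lipschitz from $\norm{\cdot}_{\star\star}$ to $\norm{\cdot}_\infty$; this is proved by computing the Jacobian $J\Phi$ and observing that $J\Phi^\top(\la\ones_N,-\la\ones_M)=0$, so one can insert an arbitrary $(\la,-\la)$ before bounding. Chaining the two lemmas gives the factor~$2$ on the \emph{sum} $\norm{\f_t-\f^\star}_\infty+\norm{\g_t-\g^\star}_\infty$---not on each term individually as you wrote.
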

\begin{proof}
The proof is deferred in Appendix~\ref{sec-supp-sinkhorn}
\end{proof}

The rate of $\Hh$-Sinkhorn is improved compared to its $\Ff$ counterpart by a factor $\kappa_\epsilon(\al)\kappa_\epsilon(\be)$, hence the speed-up illustrated in Figures~\ref{fig-sinkhorn-cv-rate} and~\ref{fig-sinkhorn-cv-rate-wot}.
Note that we leave a study of the overall complexity of $\Hh$-Sinkhorn for future works.
%

%
%

\if 0
One can thus study the convergence with the Hilbert semi-norm $\norm{\f}_\star\triangleq\inf_{\mu\in\RR}\norm{\f+\mu}_\infty$.
Compared to $\Ff$-Sinkhorn, $\Hh$-Sinkhorn performs additional uniform translations, which preserves the Hilbert norm.
For this norm $\Hh$-Sinkhorn is $\kappa_\epsilon(1+\tfrac{\epsilon}{\rho})^{-1}$-contractive, where $\kappa_\epsilon$ is the contraction rate of $\Smin{\al}{\epsilon}$~\cite{knight2008sinkhorn} \todo{donner une upper bound pour ce rate}.
\todo{is this obvious??}  This rate on $(\bar\f_t,\bar\g_t)$ translates into a convergence for $\norm{\cdot}_\infty$ on $(f_t,g_t)$ at the rate $\kappa_\epsilon(1+\tfrac{\epsilon}{\rho})^{-1}$.
\fi

\begin{figure}
	\centering
	\includegraphics[width=0.4\textwidth]{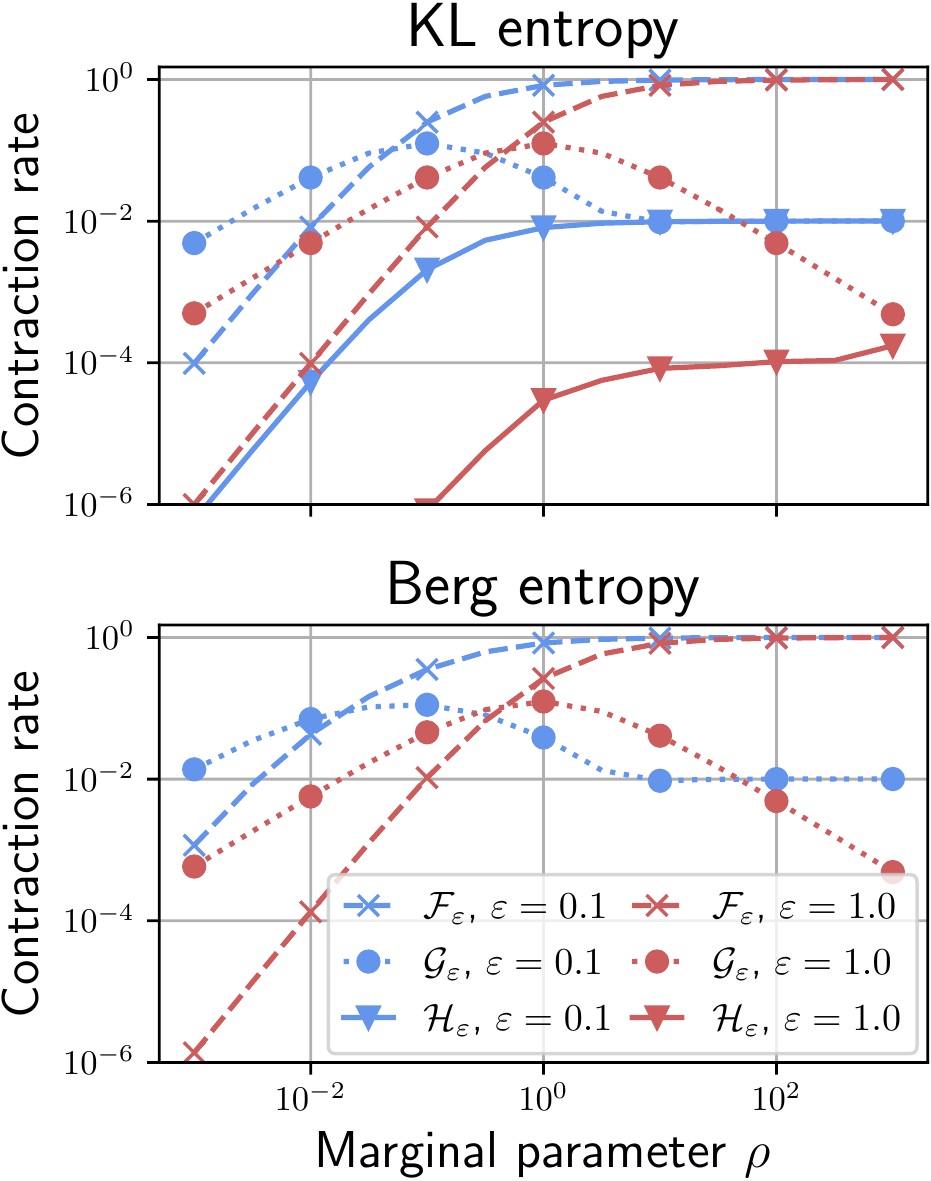}
	\caption{\textit{
		Estimation of the contraction rate of $\Ff$, $\Gg$ and $\Hh$-Sinkhorn as a function of $\rho$ for a fixed $\epsilon$. Performed on the measures of Figure~\ref{fig-iter_fw}.}
	}
	\label{fig-sinkhorn-cv-rate}
\end{figure}

\paragraph{Empirical convergence study.}
Figures~\ref{fig-sinkhorn-cv-rate} and~\ref{fig-sinkhorn-cv-rate-wot} show a numerical evaluation of the the convergence rate for $\Ff$, $\Gg$ and $\Hh$-Sinkhorn, respectively performed on synthetic 1D data (displayed Figure~\ref{fig-iter_fw}) and on the single-cell biology dataset of the WOT package\footnote{\url{https://broadinstitute.github.io/wot}}~\cite{schiebinger2017reconstruction}.
We compute the versions $(\Ff,\Gg,\Hh)$ of Sinkhorn for the $\KL$ setting.
To emphasize the generality of $\Gg$-Sinkhorn, we compute it for the Berg entropy $\phi(x) = \rho(x - 1 - \log x)$ (and $\phi^*(x) = -\rho\log(1-\tfrac{x}{\rho})$)
We observe empirically that all versions converge linearly to fixed points $(\f^\star,\g^\star)$.
Thus we focus on estimating the convergence rate $\kappa$.
Given iterates $\f_t$, it is estimated as $\kappa=e^c$ where $c$ is the median over $t$ of $\log\norm{\f_{t+1}-\f^\star}_\infty - \log\norm{\f_{t}-\f^\star}_\infty$.
We report those estimates as curves where $\rho$ varies while $\epsilon$ is fixed.

\begin{figure}
	\centering
	\includegraphics[width=0.35\textwidth]{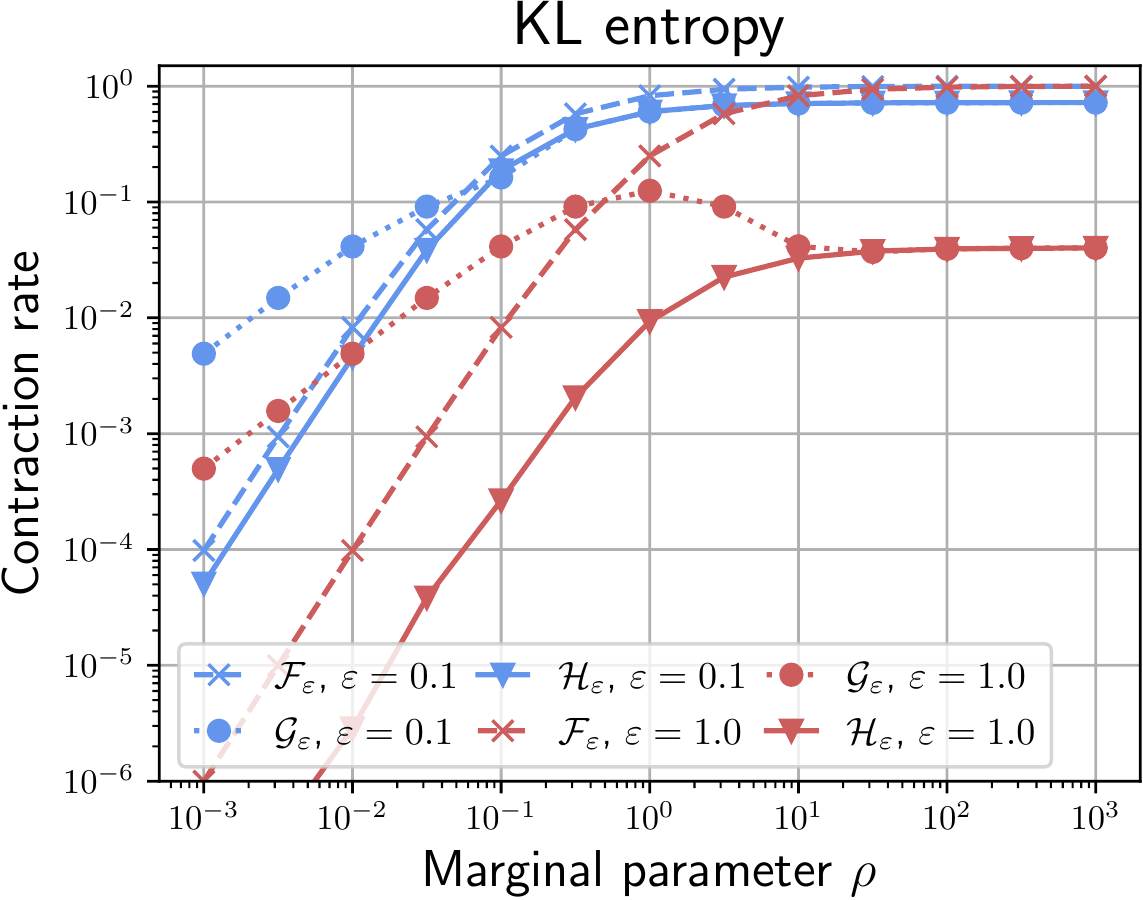}
	\caption{\textit{
			Estimation of the contraction rate of $\Ff$, $\Gg$ and $\Hh$-Sinkhorn as a function of $\rho$ for a fixed $\epsilon$. Performed on WOT single-cell data.}
	}
	\label{fig-sinkhorn-cv-rate-wot}
\end{figure}

We observe that $\Hh$-Sinkhorn outperforms both $\Gg$ and $\Ff$-Sinkhorn, and its convergence curve appears as a translation of the $\Ff$-Sinkhorn curve (of approximately $\log(\kappa_\epsilon(\al)\kappa_\epsilon(\be))<0$).
Note also that the overall complexity of $\Hh$-Sinkhorn remains $O(N^2)$ because $\Smin{}{\rho}$ translations cost $O(N)$.
Concerning $\Gg$-Sinkhorn, it outperforms $\Ff$-Sinkhorn in its slow regime $\epsilon\leq\rho$, but is slower when $\rho\leq\epsilon$.
This behaviour is consistent with $\KL$ and Berg entropies.
Thus the criteria whether $\epsilon\leq\rho$ or not seems a correct rule of thumb to decide when $\Ff$ or $\Gg$-Sinkhorn is preferable.

\paragraph{Extensions.}
%
It is also possible to accelerate the convergence of Sinkhorn using Anderson extrapolation~\cite{anderson1965iterative}, see Appendix for details.
%


\section{Frank-Wolfe solver in 1-D}
\label{sec-fw}

\if 0
\paragraph{Frank-Wolfe methods.}
Frank-Wolfe or conditional gradient methods~\cite{frank1956algorithm} aims at minimizing $\min_{x\in conv(\Aa)} \Ff(x)$, where $\Aa$ is called the set of atoms.
To do so they propose instead to minimize a linear minimization oracle (LMO) at the current iterate $x_t$, which reads $v_t\in\arg\min_{v\in\Aa} \dotp{\nabla\Ff(x_t)}{v}$. 
The next iterate $x_{t+1}$ is updated as a convex combination of $(x_t,v_t)$ via line-search $\gamma_t\in\arg\min_{\gamma\in[0,1]} \Ff(x_t + \gamma d_t)$, where $d_t=v_t-x_t$ is the descent direction.
One can skip the line-search and set $\gamma_t=\tfrac{2}{2+t}$, which gives a $O(\tfrac{1}{t})$ approximation rate of the optimizer for gradient-Lipschitz functions.

We also consider in this paper the Paiwise FW (PFW) variant~\cite{lacoste2015global}.
We store at each iterate the atom $v_t$ and its weight $w_t\geq 0$ in the convex combination as a dictionnary $\Vv_t$, i.e. at time $t$ one has $x_t = \sum_{k=1}^t w_k v_k$, and $\sum_{k=1}^t w_k = 1$.
What changes is the descent direction in the linesearch $d_t=v_t-s_{t^\star}$ where $s_{t^\star}\in\arg\max_{s\in\Vv_t}\dotp{\nabla\Ff(x_t)}{s}$.
The linesearch seeks $\gamma\in[0,w_{t^\star}]$ instead of $[0,1]$ to ensure that $x_{t+1}=x_t + \gamma d_t$ remains a convex combination.
One can interpret this variant as removing previous atoms which became irrelevant to replace them with more optimal ones.
There is an affine-invariant analysis of this variant in~\cite{lacoste2015global} which ensures a linear convergence in general settings including ours, see Figure~\ref{fig-comparison-fw-ver}.
\fi

In this section, we derive an efficient 1-D solver using Frank-Wolfe's algorithm in the unregularized setting $\epsilon=0$.
Frank-Wolfe or conditional gradient method~\cite{frank1956algorithm} minimizes a smooth convex function on a compact convex set by linearizing the function at each iteration. 
It is tempting to apply F-W's algorithm to solve the UOT problem since the resulting linearized problem is a balanced OT problem, which itself can be solved efficiently in 1-D.
One cannot however directly apply F-W to $\Ff_0$ because the associated constraint set $f \oplus g \leq \C$ (i.e. $\f_i+\g_j\leq\C_{i,j}$) is a priori unbounded, since it is left unchanged by the translation $(f+\la,g-\la)$. 
We thus propose to rather apply it on the translation invariant functional $\Hh_0$, which results in an efficient numerical scheme, which we now detail.

\paragraph{F-W for UOT.}

We apply F-W to the problem $\sup_{\Bf\oplus\Bg\leq\C} \Hh_0(\Bf,\Bg)$.
While the constraint set is a priori unbounded, we will show that the iterates remain well defined nevertheless. 
The iterations of F-W read 
$$
    (\Bf_{t+1},\Bg_{t+1}) = (1-\ga_t) (\Bf_t,\Bg_t) + \ga_t (r_{t},s_{t})
$$
for some step size $\ga_t>0$ where $(f_{t},g_{t})$ are solutions of a Linear Minmization Oracle (LMO), 
$$
    (r_{t},s_{t}) \in \uargmin{r \oplus s \leq C} \dotp{ (r,s) }{ \nabla \Hh_0(\Bf_t,\Bg_t) }.
$$
Thanks to Proposition~\ref{prop-equiv-mass-trans}, this LMO thus reads
\begin{align}
    &(r_{t},s_{t}) \in \uargmin{r \oplus s \leq C}   \dotp{r}{\tilde\al_t} + \dotp{s}{\tilde\be_t}, \\
    \qwhereq
    &\choice{
    \tal_t \triangleq \nabla\phi_1^*(-\Bf_t-\la^\star(\Bf_t,\Bg_t))\al, \\
    \tbe_t \triangleq \nabla\phi_2^*(-\Bg_t+\la^\star(\Bf_t,\Bg_t))\be.
    }\label{eq:fw-updated-histo}
\end{align}
It is thus the solution of a balanced OT problem between two histograms with equal masses. Hence the iteration of this F-W are well-defined.
Recall that $\la^\star$ is computable in closed form for $\KL$ (Proposition~\ref{prop-kl-opt-trans}) or via a Newton scheme for smooth $\phi^*$.

Note that this approach holds for any measures defined on any space.
Thus one could use any algorithm such as network simplex or Sinkhorn.
While the computational gain of this approach is not clear in general, we propose to focus on the setting of 1-D data, where the LMO is particularly fast to solve.

\if 0 
Fix a current iterate $(\f_0,\g_0)$ such that $\f_0\oplus\g_0\leq\C$.
Since the envelope theorem applies for $\Hh$ (Proposition~\ref{prop-equiv-mass-trans}), the LMO of UOT reads
\begin{align}
	\sup_{\f\oplus\g\leq\C}
	&\dotp{\al}{\nabla\phi_1^*(-\f_0-\la^\star(\f_0,\g_0))\cdot\f}\nonumber \\
	&+ \dotp{\be}{\nabla\phi_2^*(-\g_0+\la^\star\f_0,\g_0))\cdot\g}.
\end{align}
It is exactly $\OT(\tal, \tbe)$, where $\tal=\nabla\phi_1^*(-\f_0-\la^\star(\f_0,\g_0))\al$ and $\tbe=\nabla\phi_2^*(-\g_0+\la^\star(\f_0,\g_0))\be$. 
Note that one must have $m(\tal) = m(\tbe)$ in order to have a finite value for $\OT(\tal, \tbe)$, otherwise it is $+\infty$.
Such property is guaranteed by Proposition~\ref{prop-equiv-mass-trans}.
\fi 

	\begin{algorithm}[t]
		\caption{-- \textbf{SolveOT1D($x$, $\al$, $y$, $\be$, $\C$)} }\label{alg-ot-lmo}
		\small{
			\textbf{Input:} measures $(x,\al,N)$ and $(y,\be,M)$, cost $\C$\\
			\textbf{Output:} primal-dual solutions $(\pi, \f,\g)$\\
			\begin{algorithmic}[1]
				\STATE Set $\pi,\, \f,\, \g \leftarrow 0,\, 0,\, 0$
				\STATE Set $\g_1,\, a,\, b,\, i,\, j \leftarrow \C(x_1,y_1),\, \al_1,\, \be_1,\, 1,\, 1$
				\WHILE{$i<N$ or $j<M$}
				\IF{($a\geq b$ and $i<N$) or ($j=M$)}
				\STATE $\pi_{i,j},\, b\leftarrow a,\, b - a$
				\STATE $i\leftarrow i + 1$
				\STATE $\f_i,\, a\leftarrow \C(x_i, y_j) - \g_j,\, \al_i$
				\ELSIF{($a>b$ and $j<M$) or ($i=N$)}
				\STATE $\pi_{i,j},\, a\leftarrow b,\, a - b$
				\STATE $j\leftarrow j + 1$
				\STATE $\g_j,\, b\leftarrow \C(x_i, y_j) - \f_i,\, \be_j$
				\ENDIF
				\ENDWHILE
				\STATE Return $(\pi,\f,\g)$.
			\end{algorithmic}
		}
	\end{algorithm}

\paragraph{The 1-D case.}

Algorithm~\ref{alg-ot-lmo} details a fast and exact $O(N+M)$ time solver for 1-D optimal transport when $C_{i,j}=|x_i-y_j|^p$ ($p\geq 1$) in 1-D and the points are already sorted. It can thus be used to compute the LMO for 1-D UOT problems.

Algorithm~\ref{algo-fw} details the resulting F-W algorithm for 1-D UOT.
It uses either the standard step size $\gamma_t=\tfrac{2}{2+t}$ or a line search optimizing 
$$
    \ga \in [0,1] \mapsto \Hh_0( (1-\ga) \Bf_t + \gamma r_t, (1-\ga) \Bg_t + \gamma s_t ).
$$
For a KL divergence, the computation of $(\tal_t,\tbe_t)$ is in closed form and require $O(N+M)$ operations, while for other divergences, it can be obtained  using a few iterations of a Newton solver. The induced cost is in any case comparable with the one of the SolveOT1D sub-routine.  
We also test the Pairwise FW (PFW) variant~\cite{lacoste2015global}, which we detail in the Appendix. This variant requires to store all the iterates of the algorithm (thus being memory intensive to reach high precision), but ensures a linear convergence under looser additional conditions than FW.

\begin{figure*}[h!]
	\centering
	\begin{tabular}{c@{}c@{}c}
		{\includegraphics[width=0.3\linewidth]{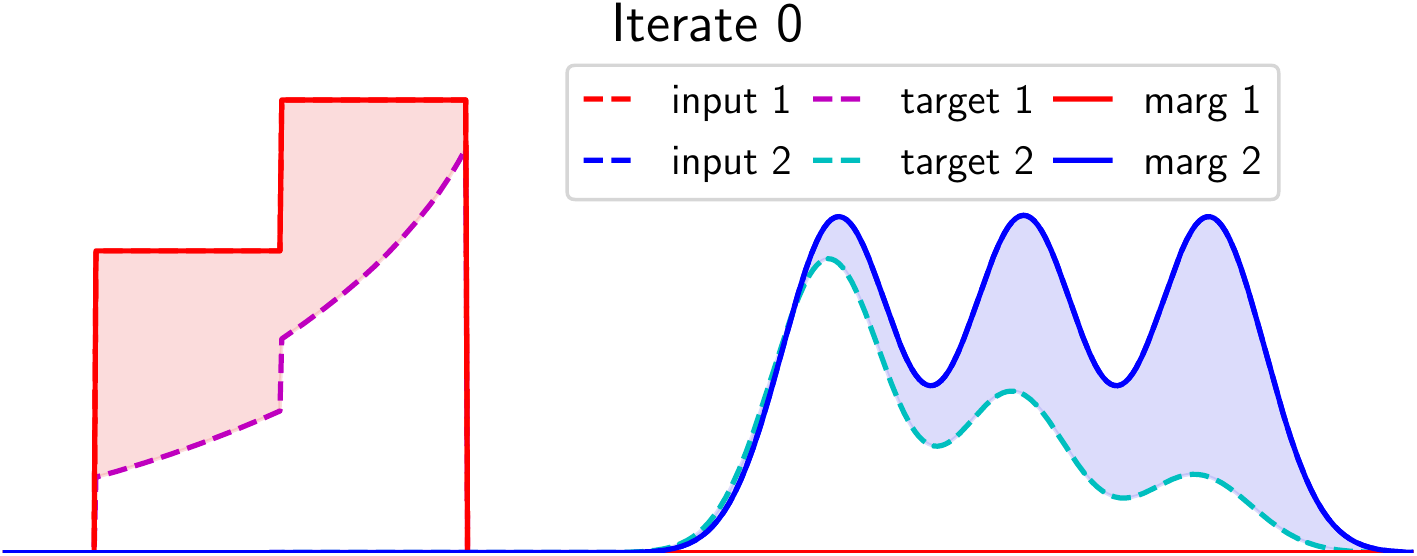}} &
		{\includegraphics[width=0.3\linewidth]{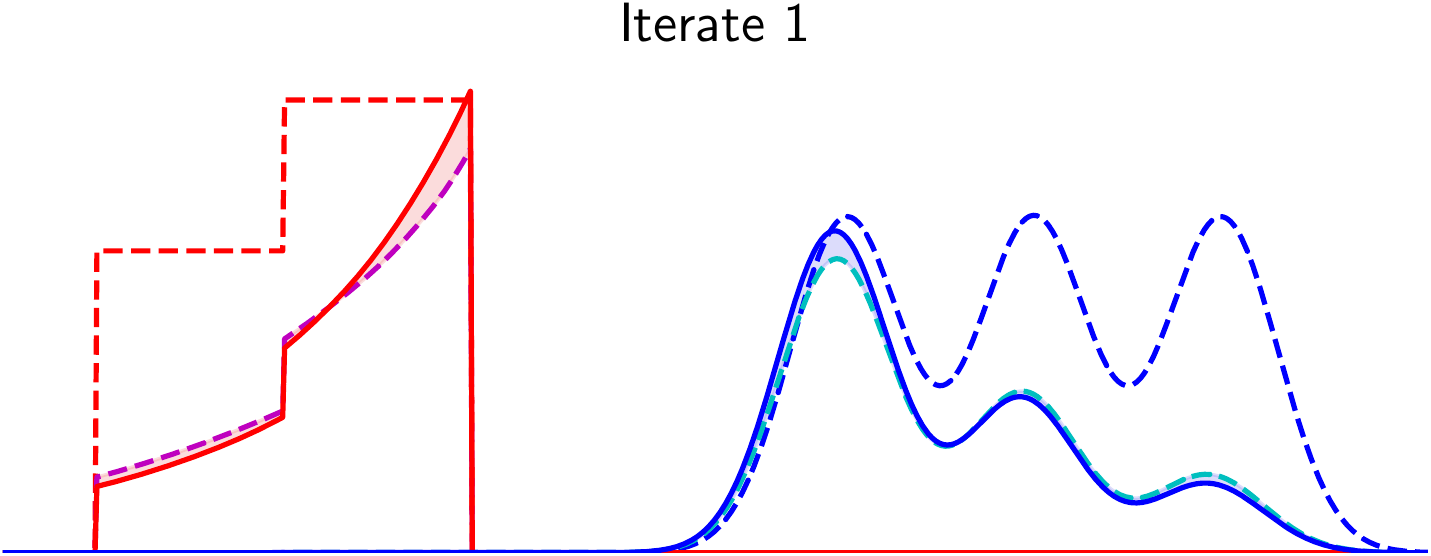}} &
		{\includegraphics[width=0.3\linewidth]{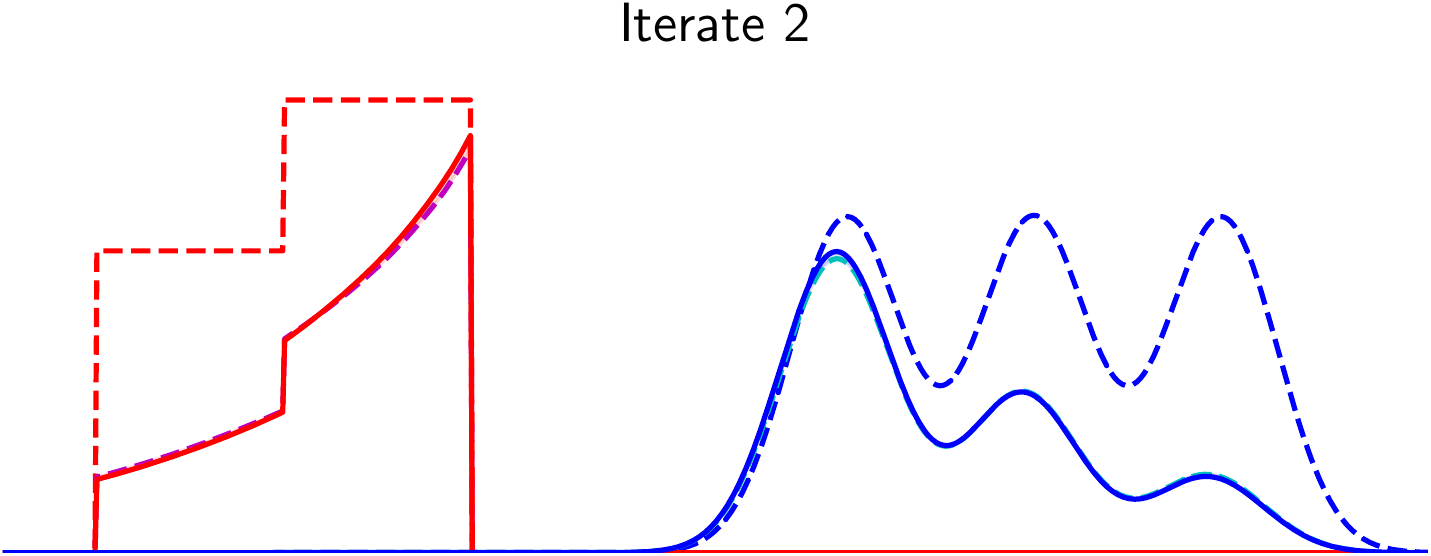}}
	\end{tabular}
	\caption{
		\textit{
		Evolution of the plan's marginals at the first iterations. The inputs $(\al,\be)$ are the dashed lines, the optimal marginals $(\pi_1^\star,\pi_2^\star)$ are dotted in cyan/magenta. The initialization is $(\f_0,\g_0)=(0,0)$, thus $(\tal_0,\tbe_0)=(\al,\be)$. The filled area is the error between $(\tal_t,\tbe_t)$ and $(\pi_1^\star,\pi_2^\star)$. 
	}
	}
	\label{fig-iter_fw}
\end{figure*}

We provide an illustration of Algorithm~\ref{algo-fw} on Figure~\ref{fig-iter_fw} to illustrate the optimization from the primal point of view.
At optimality, in the $\KL$ setting, one has $\pi_1^\star=e^{-\f^\star / \rho}\al$ and $\pi_2^\star=e^{-\g^\star / \rho}\be$.
Thus we can estimate suboptimal marginals as $\pi_{1,t}=e^{-\f_t / \rho}\al$ and $\pi_{2,t}=e^{-\g_t / \rho}\be$, where $(\f_t,\g_t)$ are the FW iterates.
We observe that the term $e^{-\f / \rho}$ acts as a normalization on the marginals.
We also observe that on these examples, the marginals are close to $(\pi_1^\star,\pi_2^\star)$ after only 2 iterations (Iteration zero is the initialization $(\pi_{1,0},\pi_{2,0})=(\al,\be)$).

\begin{figure}[H]
	\centering
	\includegraphics[width=0.35\textwidth]{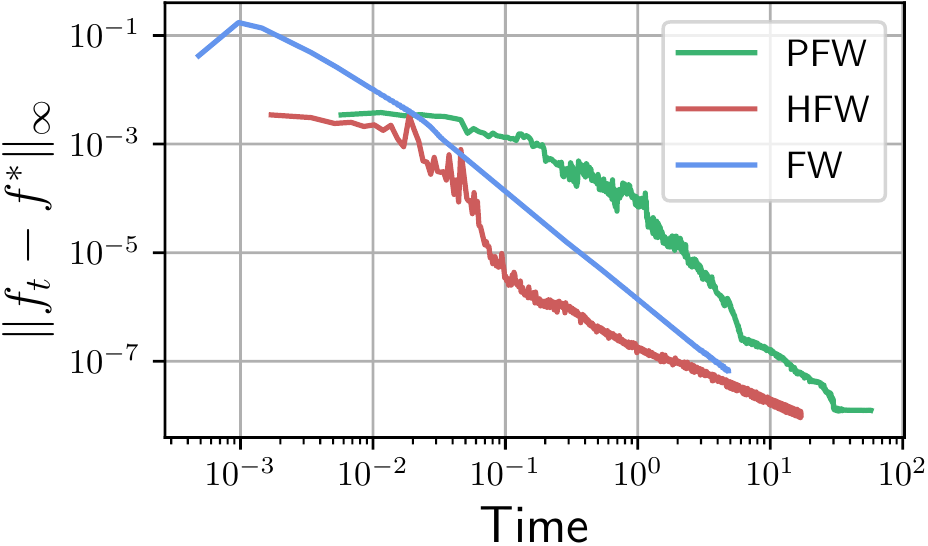}
	\caption{\textit{
		Comparison of FW with and without line-search and PFW during $10.000$ iterations. The computation time per iteration is averaged and reported. We display the error $\norm{\f_t-\f^\star}_\infty$.}
	}
	\label{fig-comparison-fw-ver}
\end{figure}

%
%
%
%
%
%
%
%
%

	\begin{algorithm}[t]
		\caption{-- \textbf{SolveUOT($x$, $\al$, $y$, $\be$, $\C$, $\rho_1$, $\rho_2$)} }\label{algo-fw}
		\small{
			\textbf{Input:} sorted $(x,y)$, histograms $(\al,\be)$, cost $\C$.  \\
			\textbf{Output:}~dual potentials $(\f_t,\g_t)$ \newline \vspace{-4mm}
			\begin{algorithmic}[1]
				\STATE Initialize  $(\Bf_0,\Bg_0)$, $t=0$
				\WHILE{$(\Bf_t,\Bg_t)$ has not converged}
				\STATE Compute $(\tal_t,\tbe_t)$ using~\eqref{eq:fw-updated-histo}.
				\STATE $(r_t,s_t) \leftarrow$ SolveOT($x$, $\tal_t$, $y$, $\tbe_t$, $\C$)
				\STATE $\gamma_t = \text{LineSearch} \Bf_t, \Bg_t, r_t, s_t) \text{ or }
				    \gamma_t=\tfrac{2}{2+t}$
				\STATE  $(\Bf_{t+1},\Bg_{t+1}) = (1-\ga_t) (\Bf_t,\Bg_t) + \ga_t (r_{t},s_{t})$, 
				    $t \leftarrow t+1$
				\ENDWHILE
				\STATE Return $(\f_t,\g_t) \triangleq (\Bf_t+\la^\star(\Bf_t,\Bg_t),  \Bg_t-\la^\star(\Bf_t,\Bg_t))$.
			\end{algorithmic}
		}
	\end{algorithm}
%

We showcase a comparison of FW (with and without linesearch on $\Hh_0$) and PFW on Figure~\ref{fig-comparison-fw-ver}.
We solve the $\UOT$ problem for $\rho=10^{-1}$ between the 1-D measures displayed Figure~\ref{fig-iter_fw}, each one having $5.000$ samples.
We run $10.000$ iterations and compare the potential $\f_t$ after we precomputed $\f^\star$.
We report the computation time to see whether or not the gain of a line-search is worth the extra computation time.
In this example we observe that FW with lineseach outperforms.
We also observe that the three variants have linear convergence.

\paragraph{Comparison of performance.}
We now compare our implementation with the Sinkhorn algorithm, which is the reference algorithm, and is especially tailored for GPU architectures.
%
%
We consider two histograms of size $N=M=200$, set $\rho=1$, compute the optimal potentials $(\f^\star,\g^\star)$ with CVXPY~\cite{diamond2016cvxpy},  run $5000$ iterations of FW without line-search and  $\Hh$-Sinkhorn and report $\norm{\f_t-\f^\star}_\infty$.
We consider Sinkhorn only on GPU and use stabilized soft-max operations since we wish to approximate the unregularized problem where $\epsilon$ should be small.
We also perform log-stable updates in FW when we compute the translation $\la^\star$.
%
%
We display the result on Figure~\ref{fig-sinkhorn-vs-fw}, where the horizontal axis refers to computation time.
Note that even for problems of such a small size, the $O(N^2)$ cost per iteration of Sinkhorn dominates the $O(N)$ cost of FW. FW provides a better estimate of $\f^\star$ during all the iterations for a wide range of $\epsilon$.
%
%
Additional results in Appendix show that that this behaviour is similar for other values of $\rho$.

\section{Barycenters}
\label{sec-barycenter}

\paragraph{UOT barycenters.}

To be able to derive an efficient F-W procedure for the computation of barycenters, we consider in this section asymmetric relaxations, where $\pi_1$ is penalized with $\rho\KL$ and we impose $\pi_2=\be$  (where $\be$ represents here the barycenter).
To emphasize the role of the positions of the support of the histograms, we denote the resulting UOT value as 
\begin{align*}
\UW( (\al,x), (\be,y) ) \triangleq \umin{\pi\geq 0, \pi_2 = \be} \dotp{\pi}{C} +\D_\phi(\pi_1 | \al),
\end{align*}
where the cost is $\C_{i,j} = c(x,y_j)$ for some ground cost~$c$.

%
%
We consider in this section the barycenter problem between $K$ measures $(\al_1,\ldots,\al_K)\in\RR_+^{N_1}\times\ldots\times\RR_+^{N_K}$, each measure being supported on a set of points $x_k = (x_{(k, 1)}, \ldots, x_{(k, N_k)})$.
It reads
\begin{align}\label{eq-bar}
\umin{\be,y} \sum_{k=1}^K \om_k \UW((\al_k,x_k),(\be,y)),
\end{align}
where $\om_k \geq 0$ and $\sum_k \om_k=1$.

\paragraph{Multi-marginal formulation.}

The main difficulty in computing such barycenters is that the support $y$ is unknown, and the problem is non-convex with respect to $y$.
Proposition~\ref{prop:barycentermulti} below states that this barycenter can be equivalently computed by solving the following convex unbalanced multi-marginal problem
\begin{align}\label{eq-multimarg}
    \umin{\ga\geq 0} \dotp{\ga}{\Cc} + \sum_{k=1}^K \om_k \D_{\phi}(\ga_k | \al_k),
\end{align}
where $\ga_k$ is the $k^{\text{th}}$ marginal of the tensor $\ga \in \RR^{N_1 \times \ldots \times N_K}$
obtained by summing over all indices but $k$. 
%
%
The cost of this multi-marginal problem is 
\begin{align}
	\Cc_{(i_1,\ldots,i_K)} &\triangleq \umin{b}\sum_k \om_k c(x_{(k,i_k)},b). \label{EqCostMultiMarginal}
\end{align}
For instance, when $c(x,y) = \norm{x-y}^2$, then up to additive constants, one has 
$\Cc_{(i_1,\ldots,i_K)} = - 2 \sum_{k \neq \ell} \dotp{x_{(k, i_k)}}{x_{(\ell, i_\ell)}}$.
We consider this setting in our experiments.
In the following, we make use of the barycentric map
\begin{equation*}
    B_\om(z_1,\ldots,z_k) \triangleq \uargmin{b} \sum_k \om_k c(z_k,b).
\end{equation*}
For instance, one has $B_\om(z_1,\ldots,z_k) = \sum_k \om_k z_k$ for $c(x,y) = \norm{x-y}^2$.

\begin{prop}\label{prop:barycentermulti}
	Problems~\eqref{eq-bar} and~\eqref{eq-multimarg} have equal value. 
	Furthermore, for any optimal multimarginal plan $\ga^\star$, with support $I = \{i : \ga_i^\star \neq 0\}$, an optimal barycenter for problem~\eqref{eq-bar} is supported on the set of points $y = ( B_\om(x_{(1, i_1)},\ldots,x_{(K, i_K)}) )_{i \in I}$ with associated weights $\be = (\ga^\star_i)_{i \in I} \in \RR^{|I|}$.
	%
\end{prop}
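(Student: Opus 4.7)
The plan is to prove the claimed equality of values by matching inequalities, the forward direction of which will directly exhibit the explicit optimal barycenter described in the statement.

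\textbf{Forward inequality.} Given any feasible $\gamma$ for~\eqref{eq-multimarg} with support $I$, I would construct a feasible candidate $(\beta, y)$ for~\eqref{eq-bar} with equal objective. Set $y_i \triangleq B_\omega(x_{(1, i_1)}, \ldots, x_{(K, i_K)})$ and $\beta_i \triangleq \gamma_i$ for $i \in I$, and for each $k$ define the coupling $\pi^{(k)}_{j, i} \triangleq \gamma_i \mathds{1}_{i_k = j}$. Routine summations yield $(\pi^{(k)})_2 = \beta$ and $(\pi^{(k)})_1 = \gamma_k$, so the divergence terms exactly reproduce $\sum_k \omega_k \D_\phi(\gamma_k | \alpha_k)$. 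By the defining property of $B_\omega$, one has $\sum_k \omega_k c(x_{(k, i_k)}, y_i) = \mathcal{C}_{(i_1, \ldots, i_K)}$ for every $i \in I$, so the total transport cost collapses to $\langle \gamma, \mathcal{C} \rangle$. Applied to an optimal $\gamma^\star$, this construction directly produces the explicit optimal barycenter in the statement.

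\textbf{Reverse inequality.} Given any feasible barycenter $(\beta, y)$ with associated plans $\pi^{(k)}$, I would glue the plans along their shared second marginal $\beta$: for each $n$ with $\beta_n > 0$ define conditionals $p^{(k)}_n(j) \triangleq \pi^{(k)}_{j, n} / \beta_n$ and set
\[
    \gamma_{(i_1, \ldots, i_K)} \triangleq \sum_{n : \beta_n > 0} \beta_n \prod_k p^{(k)}_n(i_k).
\]
A direct marginal computation, using that the $p^{(\ell)}_n$ are probability vectors that sum to $1$ in all but the $k$-th direction, gives $\gamma_k = (\pi^{(k)})_1$, so the divergence terms match. The cost inequality $\mathcal{C}_{(i_1, \ldots, i_K)} \leq \sum_k \omega_k c(x_{(k, i_k)}, y_n)$ follows by evaluating~\eqref{EqCostMultiMarginal} at $b = y_n$; integrating against $\gamma$ yields $\langle \gamma, \mathcal{C} \rangle \leq \sum_k \omega_k \langle \pi^{(k)}, C^{(k)} \rangle$.

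The main obstacle is the gluing step, specifically verifying the $k$-th marginal computation, which relies on the conditionals being genuine probability vectors and on correctly restricting the sum to $\{n : \beta_n > 0\}$. Once both inequalities are established, applying the forward construction to an optimal $\gamma^\star$ yields a feasible barycenter attaining the common optimal value, which must therefore be optimal, and whose support and weights match the explicit form in the proposition.
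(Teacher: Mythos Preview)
Your proposal is correct and follows essentially the same two-sided inequality argument as the paper's detailed proof in the appendix: the forward direction pushes an optimal multimarginal plan through the barycentric map to build a candidate barycenter, and the reverse direction glues the optimal plans $\pi^{(k)}$ along their common marginal $\beta$ and uses the suboptimality inequality $\mathcal{C}_{(i_1,\ldots,i_K)} \leq \sum_k \omega_k c(x_{(k,i_k)}, y_n)$. The only difference is cosmetic: the paper phrases the forward construction via pushforwards $(p_k, B_\omega)_\sharp \gamma$ and glues into a $(K{+}1)$-tensor before marginalizing out the barycenter coordinate, whereas you write the same objects explicitly with indicators and conditional probabilities; the paper also prefaces the argument with a short compactness step to guarantee that minimizers exist, which you implicitly assume.
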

\begin{proof}
    We can parameterize Program~\eqref{eq-bar} with variables $(\ga_1,\ldots,\ga_K)$ such that $\UW((\al_k,x_k),(\be,y))$ amounts to solve $\OT(\ga_k,\be)$ for an optimal choice of $(\ga_1,\ldots,\ga_K)$. Thus $\be$ is the solution of a balanced barycenter problem with inputs $(\ga_1,\ldots,\ga_K)$. We have from~\cite{agueh-2011} the equivalence with the multimarginal problem, hence the result.
\end{proof}

While convex, Problem~\eqref{eq-multimarg} is in general intractable because its size grows exponentially with $K$.
A noticeable exception, which we detail below, is the balanced case in 1-D, when $\D_{\phi} = \iota_{\{=\}}$ imposes $\ga_k = \al_k$, in which case it can be solved in linear time $O(\sum_k N_k)$, with an extension of the 1-D OT algorithm detailed in the previous section. This is the core of our F-W method to solve the initial barycenter problem.

%
%

\begin{figure}
	\centering
	\includegraphics[width=0.35\textwidth]{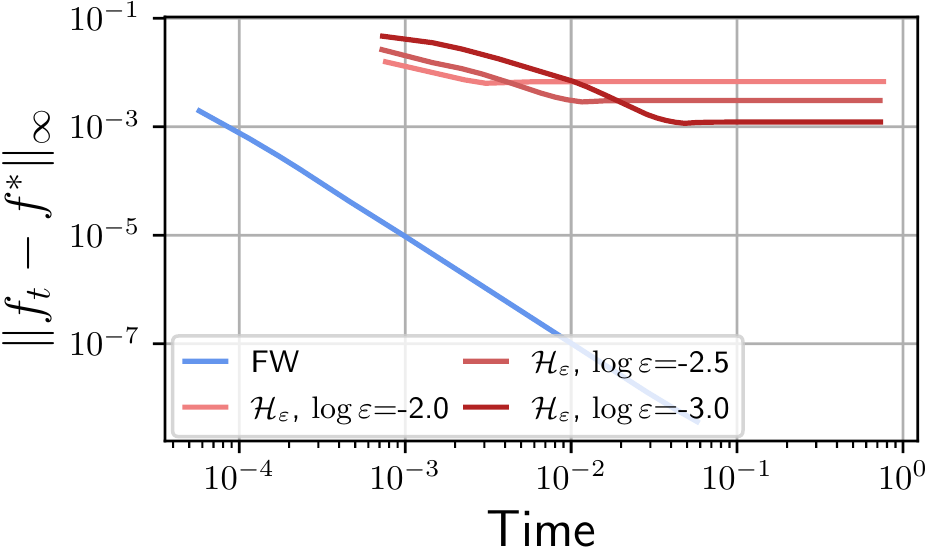}
	\caption{\textit{
		Comparison of $\norm{\f_t-\f^\star}_\infty$ depending on time for $5000$ iterations of Sinkhorn or FW without linesearch. The value of $\log_{10}(\epsilon)$ is reported in the legend.} 
	}
	\label{fig-sinkhorn-vs-fw}
\end{figure}

\begin{figure*}
	\centering
	\begin{tabular}{c@{}c@{}c@{}c@{}c}
		{\includegraphics[width=0.3\linewidth]{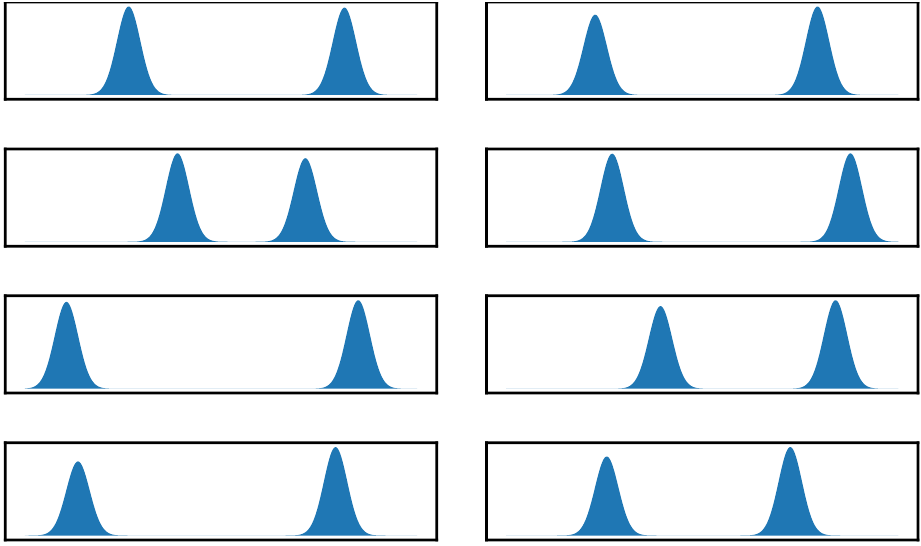}} & $\quad$ &
		{\includegraphics[width=0.3\linewidth]{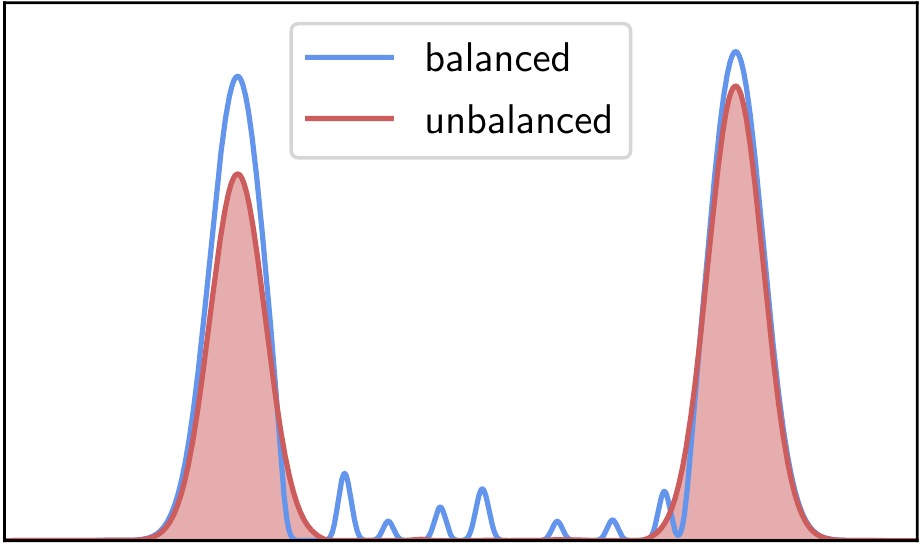}} & $\quad$ &
		{\includegraphics[width=0.3\linewidth]{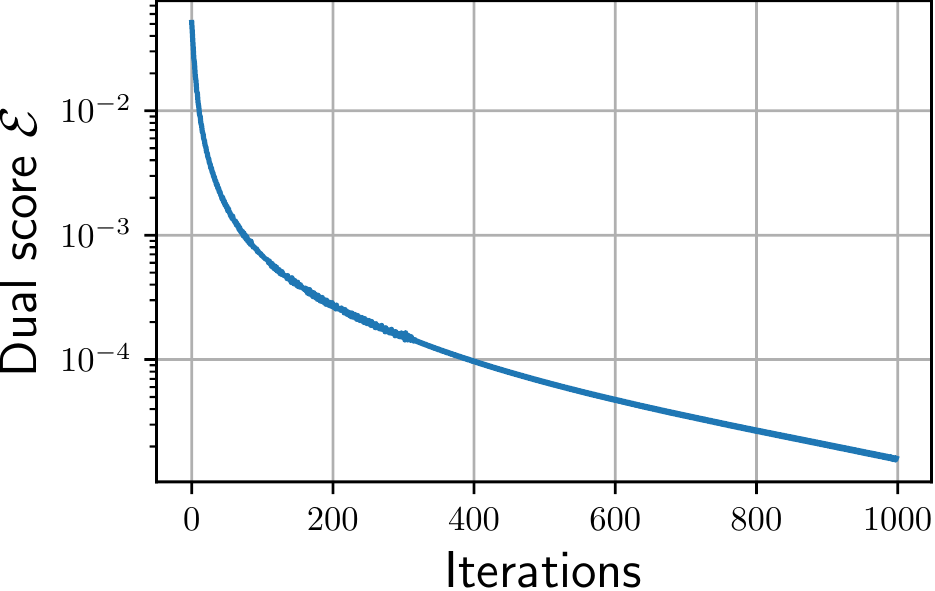}}
	\end{tabular}
	\caption{\textit{
			Left: plot of 8 random mixtures supported in $[0,1]$ used to compute their isobarycenter. Center: Barycenter for $\OT$ and $\UOT$. Right: Value of dual objective $\Hh((\Bf^\star)_k) - \Hh((\Bf_t)_k)$.}  
	}
	\label{fig-bar-fw}
\end{figure*}

\paragraph{Solving the 1-D balanced multimarginal problem.}

Algorithm \ref{algo-multimarg-ot} solves the multi-marginal problem~\eqref{eq-multimarg} in the balanced case $\D_{\phi} = \iota_{\{=\}}$.
It is valid in the 1-D case, when $c(x,y)=|x-y|^p$ for $p \geq 1$, and more generally when $\Cc$ satisfies some submodularity condition as defined in \cite{bach2019submodular, carlier2003class}.
We show the correctness of this algorithm in the Appendix.
Note that Algorithm~\ref{algo-multimarg-ot} differs from~\cite{cohen2021sliced, bach2019submodular} which solves 1D multimarginal OT by computing $\ell_2$ norms of inverse cumulative distribution functions in the barycenter setting.
While both approaches can be used to backpropagate through the multimarginal loss, Algorithm~\ref{algo-multimarg-ot} holds for more general costs, and allows to compute an explicit plan and thus the barycenter.

	\begin{algorithm}[t]
		\caption{-- \textbf{SolveMOT($(x_k)_k$, $(\al_k)_l$, $\om$, $\Cc$)} }\label{algo-multimarg-ot}
		\small{
			\textbf{Input:} $K$ measures $(x_k,\al_k,N_k)$, $K$ weights $\om_k$ and multimarginal cost $\Cc$\\
			\textbf{Output:}~primal-dual solutions $(\ga,\, \{\f_k\}_k)$\\
			\begin{algorithmic}[1]
				\STATE Set $\ga,\, \f_{k} \leftarrow 0,\, 0$
				\STATE Set $ \{a_k\}_k ,\, \{i_k\}_k \, \leftarrow \,\{\al_{(k, 1)}\}_k ,\, \{1\}_k$
				\STATE Set $\f_{(1, 1)} \leftarrow \Cc(x_{(1, 1)},\dots,x_{(K, 1)})$
				\WHILE{$\exists k,\, i_k < N_k$}
				\STATE $p\leftarrow\arg\min\{k\mapsto a_k \,\, \textrm{s.t.}\,\, i_k<N_k\}$
				\STATE $\ga_{(i_1,\ldots,i_K)},\,\{a_k\}_k\leftarrow a_p,\,   \{a_k - a_p\}_k$
				\STATE  $i_p\leftarrow  i_p + 1$
				\STATE $\f_{(p, i_p)}  \leftarrow  \Cc(x_{(1, i_1)},\dots,x_{(K, i_K)}) - \sum_{k\neq p}\f_{(k,i_k)}$
				\STATE $ a_p  \leftarrow \al_{(p, i_p)}$
				\ENDWHILE
				\STATE Return $(\ga,\, \{\f_k\}_k)$.
			\end{algorithmic}
		}
	\end{algorithm}

\paragraph{Translation invariant multi-marginal.}
The dual of the multi-marginal problem~\eqref{eq-multimarg} reads, for $\f = (\f_1,\ldots,\f_K)$,
\begin{align}
    \umax{\f_1\oplus\ldots\oplus\f_K\leq\Cc} \Ff(\f) \triangleq \sum_{k=1}^K \dotp{\al_k}{-\om_k\phi^*(-\tfrac{\f_k}{\om_k}) }.
	 \label{eq-dual-multimarg}
\end{align}
Similarly to Section~\ref{sec-trans-inv}, we define a translation invariant functional for $\Bf = (\Bf_1,\ldots,\Bf_K)$
%
\begin{align*}
	\Hh(\Bf) 
	\triangleq
	\sup_{\sum_k \la_k=0}\Ff(\Bf_1+\la_1,\ldots,\Bf_K+\la_K).
\end{align*}
The following proposition generalizes Proposition~\ref{prop-equiv-mass-trans}.

\begin{prop}\label{prop-optim-trans-multimarg}
	Assume that $\phi^*$ is smooth and strictly convex. Then there exists a unique 
	$\la^\star(\Bf) = (\la_1,\ldots,\la_K)$ s.t. $\sum_k \la_k=0$ in the definition of $\Hh$.
	Then $\nabla \Hh(\Bf) = \tal$ where 
	$\tal_k \triangleq \nabla\phi^*(-\Bf_k-\la_k)\al_k$ is such that for any $(i,j)$ one has $m(\tal_i)=m(\tal_j)$.
\end{prop}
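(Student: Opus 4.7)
The plan is to generalize Proposition~\ref{prop-equiv-mass-trans} by replacing the scalar translation $\la\in\RR$ with a vector $\la\in\RR^K$ constrained to the hyperplane $H \triangleq \{\la\in\RR^K : \sum_k\la_k = 0\}$. I would carry out three steps: (i) establish existence and uniqueness of $\la^\star(\Bf)$ on $H$; (ii) compute $\nabla\Hh$ via the envelope theorem; (iii) derive the equal-mass identity from the first-order condition for $\la^\star$.

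For (i), each summand of $\Ff(\Bf+\la)$ is concave in $\la_k$, and strictly concave by strict convexity of $\phi^*$. Using the growth property $\lim_{x\to+\infty}\phi^*(x)=+\infty$ recalled in the proof of Proposition~\ref{prop-equiv-mass-trans}, the $k$-th term tends to $-\infty$ as $\la_k$ escapes in the appropriate direction. On $H$, any sequence with $\|\la^{(n)}\|\to\infty$ must satisfy $\min_k\la_k^{(n)}\to-\infty$ (otherwise the sum $\sum_k\la_k^{(n)}=0$ would be violated), so at least one summand diverges to $-\infty$ while the others remain bounded above. This gives coercivity on $H$, hence existence of a maximizer; strict concavity then gives uniqueness.

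For (ii), since $\phi^*$ is smooth and $\la^\star(\Bf)$ is uniquely attained, the envelope theorem applies to $\Hh(\Bf)=\Ff(\Bf+\la^\star(\Bf))$ and yields $\nabla_{\Bf_k}\Hh(\Bf)=\nabla_{\Bf_k}\Ff(\Bf+\la^\star)=\nabla\phi^*(-\Bf_k-\la_k^\star)\,\al_k=\tal_k$, which is exactly the claimed formula.

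For (iii), I would introduce a Lagrange multiplier $\mu\in\RR$ for the affine constraint $\sum_k\la_k=0$ and form the Lagrangian $\mathcal{L}(\la,\mu)\triangleq\Ff(\Bf+\la)-\mu\sum_k\la_k$. Stationarity in each $\la_k$ reads $\dotp{\al_k}{\nabla\phi^*(-\Bf_k-\la_k^\star)}=\mu$, i.e.\ $m(\tal_k)=\mu$ for every $k$. Because $\mu$ is a single scalar independent of $k$, this immediately gives $m(\tal_i)=m(\tal_j)$ for all pairs $(i,j)$. The only genuinely new subtlety compared to Proposition~\ref{prop-equiv-mass-trans} is the coercivity step in (i): with a scalar translation it was immediate, whereas here one must handle $K-1$ free directions jointly via the constraint $\sum_k\la_k=0$; once this is in place, the envelope and Lagrangian arguments in (ii)--(iii) transpose verbatim.
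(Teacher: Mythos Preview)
Your proposal is correct and follows the same strategy as the paper: coercivity plus strict concavity for existence and uniqueness of $\la^\star$, then the first-order optimality condition to obtain the equal-mass identity (the envelope step for $\nabla\Hh$ is left implicit in the paper's proof and is exactly as you state it). The only cosmetic difference is that the paper reparameterizes via $\la_k\mapsto\la_k-\tfrac{1}{K}\sum_j\la_j$ to drop the constraint $\sum_k\la_k=0$ and then differentiates the unconstrained objective, whereas you keep the constraint and introduce a Lagrange multiplier; both routes yield the same stationarity condition $m(\tal_i)=m(\tal_j)$.
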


\begin{proof}
	For this proof we reparameterize $(\la_1,\ldots,\la_K)$ as $(\la_1-\La,\ldots,\la_K-\La)$ where $\La =\tfrac{1}{K}\sum_k \la_k$, such that the constraint $\sum_k \la_k=0$ can be dropped.
	If any $\la_i\rightarrow-\infty$ then $\Ff(\ldots)\rightarrow-\infty$.
	If any $\la_i\rightarrow+\infty$ then $\La\rightarrow+\infty$ and again $\Dd(\ldots)\rightarrow-\infty$.
	Thus we are in a coercive setting, and there is existence of a minimizers.
	Uniqueness is given by the strict convexity of $\phi^*$.
	Finally, the first order optimality condition w.r.t. $\la_i$ reads $\dotp{\al_i}{\nabla\phi^*(-\f_i -\la_i + \La)} = \sum_k \dotp{\al_k}{\nabla\phi^*(-\f_k - \la_k + \La)}$.
	The r.h.s. term is the same for any $i$, thus for any $(i,j)$ one has $\dotp{\al_i}{\nabla\phi^*(-\f_i -\la_i + \La)} = \dotp{\al_j}{\nabla\phi^*(-\f_j -\la_j + \La)}$, which reads $m(\tal_i)=m(\tal_j)$.
\end{proof}

The following proposition shows that for $\rho \KL$ divergences, one can compute the optimal translation in closed form.
%

\begin{prop}[$\KL$ setting]
	%
	When $\D_{\phi} = \rho \KL$, 
	then, denoting $q_k \triangleq \log\dotp{\al_k}{e^{-\f_k / (\om_k \rho)}}$, 
	\eql{\label{eq-optim-const-multimarg}
		\la^\star(\Bf)_i = \om_i \rho q_i - \frac{\om_i}{\sum_k \om_k} \sum_{k=1}^K \om_k \rho q_k.
	}
\end{prop}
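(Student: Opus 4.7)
The plan is to derive the formula as a direct consequence of the first-order optimality conditions for the constrained maximization $\sup_{\sum_k \la_k=0} \Ff(\Bf_1+\la_1,\ldots,\Bf_K+\la_K)$, whose unique solution is guaranteed by Proposition~\ref{prop-optim-trans-multimarg}. Introduce a Lagrange multiplier $\mu \in \RR$ for the scalar constraint $\sum_k \la_k = 0$ and consider $L(\la,\mu) \triangleq \Ff(\Bf+\la) + \mu \sum_k \la_k$.

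For the $\rho \KL$ setting, one has $\phi^*(y) = \rho(e^{y/\rho} - 1)$ and $\nabla\phi^*(y) = e^{y/\rho}$. The $k$-th summand of $\Ff$, namely $\dotp{\al_k}{-\om_k \phi^*(-(\Bf_k+\la_k)/\om_k)}$, depends on $\la_k$ only, and its partial derivative with respect to $\la_i$ simplifies (the $\om_i$ factors cancel) to $\dotp{\al_i}{e^{-(\Bf_i+\la_i)/(\om_i \rho)}}$. The stationarity condition $\partial_{\la_i} L = 0$ therefore reads
\begin{equation*}
\dotp{\al_i}{e^{-(\Bf_i+\la_i)/(\om_i \rho)}} = -\mu \qquad \text{for every } i,
\end{equation*}
so that this quantity is independent of $i$. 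Factoring the constant $e^{-\la_i/(\om_i \rho)}$ out of the inner product and taking logarithms gives, with $q_i \triangleq \log\dotp{\al_i}{e^{-\Bf_i/(\om_i \rho)}}$, the expression
\begin{equation*}
\la_i = \om_i \rho\, q_i - \om_i \rho\, \log(-\mu),
\end{equation*}
i.e.\ $\la_i$ is affine in the weight $\om_i$.

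It then remains to determine $\log(-\mu)$ from the constraint $\sum_i \la_i = 0$. Summing the previous identity yields $\sum_i \om_i \rho\, q_i = (\log(-\mu)) \sum_i \om_i \rho$, hence $\log(-\mu) = (\sum_k \om_k q_k)/(\sum_k \om_k)$. Substituting back gives the claimed closed form
\begin{equation*}
\la^\star(\Bf)_i = \om_i \rho\, q_i - \frac{\om_i}{\sum_k \om_k} \sum_{k=1}^K \om_k \rho\, q_k.
\end{equation*}
There is essentially no obstacle beyond keeping track of the factors $\om_k$ arising from the $-\om_k \phi^*(-\cdot/\om_k)$ rescaling; uniqueness of $\mu$ (and hence of $\la^\star$) follows from strict convexity already invoked in Proposition~\ref{prop-optim-trans-multimarg}, and numerical stability of $q_i$ is handled by a logsumexp reduction as in Proposition~\ref{prop-kl-opt-trans}.
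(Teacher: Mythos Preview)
Your proof is correct and follows essentially the same route as the paper: both arguments exploit that the first-order optimality condition forces $\dotp{\al_i}{e^{-(\Bf_i+\la_i)/(\om_i\rho)}}$ to take a common value for all $i$, and then use the constraint $\sum_k\la_k=0$ to identify that value. The only cosmetic difference is that you introduce a Lagrange multiplier $\mu$ for the linear constraint, whereas the paper reparameterizes $\la_k\mapsto\la_k-\tfrac{1}{K}\sum_j\la_j$ to drop the constraint; the resulting computations are identical.
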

Note that Equation~\eqref{eq-optim-const-multimarg} can be computed in $O(\sum_k N_k)$ time, and could be used in the multi-marginal Sinkhorn algorithm to potentially improve its convergence.

\paragraph{Unbalanced multi-marginal F-W.}

Similarly to Sections~\ref{sec-trans-inv} and~\ref{sec-fw}, we propose to optimize the multimarginal problem~\eqref{eq-multimarg} via F-W applied to the problem 
\begin{equation*}
    \umax{\Bf_1\oplus\ldots\oplus\Bf_K\leq\Cc} \Hh(\Bf).
\end{equation*}
Each F-W step has the form $\Bf^{(t+1)} = (1-\tau_t) \Bf^{(t)} + \tau_t r$ where, thanks to Proposition~\ref{prop-optim-trans-multimarg}, $r=(r_1,\ldots,r_K)$ solves the following LMO
\eq{
	\umax{r_1 \oplus\ldots\oplus r_K\leq\Cc} \sum_{k=1}^K \dotp{\tilde\al_k^{(t)}}{r_k}
	\text{ with }
	\tilde\al_k^{(t)} \triangleq \nabla\phi_k^*(-\Bf_k^{(t)} - \la_k).
}
Proposition~\ref{prop-optim-trans-multimarg} guarantees that all the $\tilde\al_k^{(t)}$, have the same mass, so that the F-W iterations are well defined. 
In 1-D, this LMO can thus be solved in linear time using the function SolveMOT($(x_k)_k, (\tilde\al_k^{(t)})_k, \om, \Cc$).


\paragraph{Numerical experiments.}
Figure~\ref{fig-bar-fw} displays an example of computation of the balanced OT (corresponding to using $\rho=+\infty$) and UW barycenters of Gaussian mixtures.
We consider the isobarycenter, $\om_k=\tfrac{1}{K}$.
The input measures are $K=8$ Gaussian mixtures $a\cdot\Nn(\mu_1,\sigma) + b\cdot\Nn(\mu_2,\sigma)$ where $\sigma=0.03$, $\mu_1\sim\Uu([0.1,\,0.4])$, $\mu_2\sim\Uu([0.6,\,0.9])$ and $(a,b)\sim\Uu([0.8,\,0.1])$.
The input measures and the barycenter are not densities, but $5.000$ samples smoothed on Figure~\ref{fig-bar-fw} using Gaussian kernel density estimation.
One can observe that both $\OT$ and $\UW$ retrieve two modes.
Note however that $\OT$ barycenter displays multiple undesirable minor modes between the two main modes.
This highlights the ability of the unbalanced barycenter to cope with mass variations in the modes of the input distribution, which create undesirable artifact in the balanced barycenter.


\section{Conclusion}

We presented in this paper a translation invariant reformulation of $\UOT$ problems.
While conceptually simple, this modification allows to make Sinkhorn's iterations as fast  in the unbalanced as in the balanced case.
This also allows to operate F-W steps, which turns out to be very efficient for 1-D problems.

\section*{Acknowledgements}

The work of Gabriel Peyr\'e was supported by the French government under management of Agence Nationale de la Recherche as part of the ``Investissements d'avenir'' program, reference ANR19-P3IA-0001 (PRAIRIE 3IA Institute) and by the European Research Council (ERC project NORIA). 

\bibliography{biblio.bib}
%
%
\onecolumn
\appendix

\section{Appendix of Section 2 - Translation Invariance}

\subsection{Detailed proof of Proposition 2}

\begin{proof}
	Recall the optimality condition in the $\KL$ setting which reads
	\begin{align*}
		&\dotp{\al}{e^{-\frac{\Bf +\la^\star}{\rho_1}}} = \dotp{\be}{e^{-\frac{\Bg -\la^\star}{\rho_2}}},\\
		&\Leftrightarrow  e^{-\frac{\la^\star}{\rho_1}}\dotp{\al}{e^{-\frac{\Bf}{\rho_1}}} = e^{+\frac{\la^\star}{\rho_2}}\dotp{\be}{e^{-\frac{\Bg}{\rho_2}}},\\
		&\Leftrightarrow -\frac{\la^\star}{\rho_1} + \log\dotp{\al}{e^{-\frac{\Bf}{\rho_1}}} = 
		\frac{\la^\star}{\rho_2} + \log\dotp{\be}{e^{-\frac{\Bg}{\rho_2}}},\\
		&\Leftrightarrow \la^\star\big(\frac{1}{\rho_1} + \frac{1}{\rho_2}\big) = 
		\log\Bigg[\frac{\dotp{\al}{e^{-\frac{\Bf}{\rho_1}}}}{\dotp{\al}{e^{-\frac{\Bg}{\rho_2}}}}\Bigg],\\
		&\Leftrightarrow \la^\star(\Bf,\Bg) = \frac{\rho_1\rho_2}{\rho_1 + \rho_2}
		\log\Bigg[\frac{\dotp{\al}{e^{-\frac{\Bf}{\rho_1}}}}{\dotp{\al}{e^{-\frac{\Bg}{\rho_2}}}}\Bigg].
	\end{align*}
	Hence the result of Proposition 2.
\end{proof}

\subsection{Proof of Proposition 3}

\begin{proof}
	Recall the expression of $\Ff_\epsilon$ which reads in the $\KL$ setting
	\begin{align*}
		\Ff_\epsilon(\f,\g) &= \dotp{\al}{-\rho_1(e^{-\f / \rho_1} - 1)} + \dotp{\be}{-\rho_2(e^{-\g / \rho_2} - 1)}\\
		&= \rho_1 m(\al) + \rho_2 m(\be) - \rho_1\dotp{\al}{e^{-\f / \rho_1}} - \rho_2\dotp{\be}{e^{-\g / \rho_2}}.
	\end{align*}
	We have that $\Hh_\epsilon(\Bf,\Bg) = \Ff_\epsilon(\Bf + \la^*(\Bf,\Bg), \Bg - \la^*(\Bf,\Bg))$.
	Applying Proposition 2, we have that
	\begin{align*}
		\dotp{\al}{e^{-\frac{\Bf + \la^\star(\Bf,\Bg)}{\rho_1}}} &= \dotp{\al}{e^{-\f / \rho_1}}\cdot\exp\Bigg(-\frac{\rho_2}{\rho_1 + \rho_2}
			\log\Bigg[\frac{\dotp{\al}{e^{-\frac{\Bf}{\rho_1}}}}{\dotp{\al}{e^{-\frac{\Bg}{\rho_2}}}}\Bigg]\Bigg)\\
			&= \dotp{\al}{e^{-\Bf / \rho_1}}\cdot\dotp{\al}{e^{-\Bf / \rho_1}}^{-\frac{\rho_2}{\rho_1 + \rho_2}}\cdot\dotp{\be}{e^{-\Bg / \rho_2}}^{\frac{\rho_2}{\rho_1 + \rho_2}}\\
			&=\dotp{\al}{e^{-\Bf / \rho_1}}^{\frac{\rho_1}{\rho_1 + \rho_2}}\cdot\dotp{\be}{e^{-\Bg / \rho_2}}^{\frac{\rho_2}{\rho_1 + \rho_2}}
	\end{align*}
	A similar calculation for $\dotp{\be}{e^{-(\Bg - \la^\star) / \rho_2}}$ yields
	\begin{align*}
		\dotp{\be}{e^{-\frac{\Bg - \la^\star(\Bf,\Bg)}{\rho_2}}} = \dotp{\al}{e^{-\frac{\Bf + \la^\star(\Bf,\Bg)}{\rho_1}}} =\dotp{\al}{e^{-\Bf / \rho_1}}^{\frac{\rho_1}{\rho_1 + \rho_2}}\cdot\dotp{\be}{e^{-\Bg / \rho_2}}^{\frac{\rho_2}{\rho_1 + \rho_2}}.
	\end{align*}
	Applying the above result in the definition of $\Hh_\epsilon$ yields the result of Proposition 3.
\end{proof}
\section{Appendix of Section 3 - Translation Invariant Sinkhorn algorithm}
\label{sec-supp-sinkhorn}

We focus in this appendix on detailing the properties of $\Hh$-Sinkhorn algorithm.
We recall belowe some notations:
\begin{align*}
	&\Psi_1: \Bf\mapsto \argmax_{\Bg} \Hh_\epsilon(\Bf,\Bg),\\
	&\Psi_2: \Bg\mapsto \argmax_{\Bf} \Hh_\epsilon(\Bf,\Bg),\\
	&\Phi: (\Bf,\Bg)\mapsto (\Bf + \la^\star(\Bf,\Bg), \Bg - \la^\star(\Bf,\Bg)),\\
	&\Upsilon_1: \Bf\mapsto (\Bf, \Psi_1(\Bf)),\\
	&\Upsilon_2: \Bg\mapsto (\Psi_2(\Bg), \Bg).
\end{align*}

In this section we focus on the properties on the map $\Psi_1$ (and $\Upsilon_1$) which represents the $\Hh$-Sinkhorn update of $\Bf$.
By analogy, those results hold for the map $\Psi_2$ which updates $\Bg$.

This section involves the use of several norms, namely the sup-norm $\norm{\cdot}_\infty$ and the Hilbert pseudo-norm $\norm{\f}_\star = \inf_{\la\in\RR} \norm{\f + \la}_\infty$ which is involved in the convergence study of the Balanced Sinkhorn algorithm~\cite{knight2008sinkhorn}.
The pseudo-norm $\norm{\cdot}_\infty$ is zero iff the functions are equal up to a constant
We also define a variant of the Hilbert norm defined for two functions $(\Bf,\Bg)$ which is definite up to the dual invariance $(\Bf + \la,\Bg-\la)$ of $\Hh$.
It reads $\norm{(\Bf,\Bg)}_{\star\star} \triangleq \min_{\la\in\RR} \norm{\Bf + \la}_\infty + \norm{\Bg-\la}_\infty$.

\subsection{Generic properties of $\Hh$-Sinkhorn updates}

\begin{prop}\label{prop-aprox-eq-psi}
	The map $\Psi_1(\Bf)$ satisfies the implicit equation
	\begin{align*}
	\Psi_1(\Bf) = -\aprox_{\phi^*_1}\Big(\; -\Smin{\al}{\epsilon}\big(\; \C - \Bf - \la^\star(\Bf, \Psi_1(\Bf)) \;\big) \;\Big) + \la^\star(\Bf, \Psi_1(\Bf)).
	\end{align*}
\end{prop}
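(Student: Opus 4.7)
The plan is to rewrite $\argmax_\Bg \Hh_\epsilon(\Bf, \Bg)$ as a joint supremum over $(\Bg, \la)$, exchange the order of optimization, and then recognize the resulting inner problem as one step of the standard Sinkhorn maximization. Using the appendix's convention $\Psi_1(\Bf) = \argmax_\Bg \Hh_\epsilon(\Bf, \Bg)$ together with the definition $\Hh_\epsilon(\Bf,\Bg) = \sup_\la \Ff_\epsilon(\Bf+\la, \Bg-\la)$, I would start from
\begin{align*}
    \max_\Bg \Hh_\epsilon(\Bf, \Bg) = \sup_{(\la, \Bg)} \Ff_\epsilon(\Bf+\la,\, \Bg-\la),
\end{align*}
and then note that, for fixed $\Bf$, the map $(\la, \Bg) \mapsto (\la, \Bg-\la)$ is a bijection of $\RR \times \RR^M$, so setting $g := \Bg - \la$ rewrites the right-hand side as $\sup_\la \sup_g \Ff_\epsilon(\Bf+\la,\, g)$ with $\la$ and $g$ now unconstrained and decoupled.

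For each fixed $\la$, the inner supremum is exactly the optimization underlying the standard $\Ff_\epsilon$-Sinkhorn $g$-update at $f = \Bf + \la$. Its unique critical point is given by the $\aprox$-formula recalled in Section~\ref{sec-sinkhorn}, namely
\begin{align*}
    g^\star(\la) = -\aprox_{\phi^*_1}\!\big(-\Smin{\al}{\epsilon}(\C - (\Bf + \la))\big).
\end{align*}
Let $\la^\bullet$ maximize $\la \mapsto \Ff_\epsilon(\Bf+\la,\, g^\star(\la))$ and set $\Bg^\bullet := g^\star(\la^\bullet) + \la^\bullet$. Then $(\la^\bullet, \Bg^\bullet)$ is a joint maximizer of the original problem, and swapping the order of suprema back identifies $\Bg^\bullet = \Psi_1(\Bf)$; moreover $\la^\bullet$ is a maximizer of $\la \mapsto \Gg_\epsilon(\Bf, \Bg^\bullet, \la)$, which is unique by Proposition~\ref{prop-equiv-mass-trans}, hence $\la^\bullet = \la^\star(\Bf, \Psi_1(\Bf))$. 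Substituting back into $\Bg^\bullet = g^\star(\la^\bullet) + \la^\bullet$ yields exactly the claimed identity.

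The main subtlety --- and the reason the statement is phrased implicitly --- is the self-referential dependence of the right-hand side on $\Psi_1(\Bf)$ through $\la^\star$. There is no separate fixed-point argument to carry out: the two-level structure of $\Hh_\epsilon$ is designed precisely so that the consistency between the outer optimization in $\Bg$ and the inner optimization in $\la$ forces this fixed-point relation automatically, rather than producing any extra side condition. Once the swap of suprema and the uniqueness statement from Proposition~\ref{prop-equiv-mass-trans} are in hand, the implicit formula is really just the standard Sinkhorn critical-point equation written in the translated coordinates $(f,g) = (\Bf+\la^\star,\Bg-\la^\star)$.
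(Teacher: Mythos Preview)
Your argument is correct and rests on the same core idea as the paper's: the change of variable $g=\Bg-\la$ (equivalently $\hg=\Bg-\la^\star$) turns the $\Hh_\epsilon$-update into the standard $\Ff_\epsilon$-Sinkhorn update evaluated at the translated potential $\Bf+\la^\star$. The only difference is presentational: the paper starts from the first-order optimality condition~\eqref{eq-optim-h-sink} for $\Hh_\epsilon$ and performs the change of variable directly on that equation to recognize the $\aprox$/softmin form, whereas you work one level up at the variational problem, swap the suprema over $\la$ and $\Bg$, and then identify the inner problem as a Sinkhorn step and the outer maximizer as $\la^\star$ via Proposition~\ref{prop-equiv-mass-trans}. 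Both routes are short and equivalent; yours is slightly more self-contained since it does not invoke~\eqref{eq-optim-h-sink}, while the paper's is marginally more direct once that equation is in hand.
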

\begin{proof}
	Recall the optimality Equation~\eqref{eq-optim-h-sink} transposed to optimality w.r.t. $\Bg$
	\begin{align}
	e^{\Bg/\epsilon}\dotp{\al}{e^{(\Bf-\C)/\epsilon}}=\nabla\phi_1^*(-\Bg+\la^\star(\Bf,\Bg)).
	\end{align}
	Perform a change of variable $\hg = \Bg - \la^\star(\Bf,\Bg)$, such that the above equation reads
	\begin{align}
	e^{\hg/\epsilon}\dotp{\be}{e^{(\Bf+\la^\star(\Bf,\Bg)-\C)/\epsilon}}=\nabla\phi_1^*(-\hg).
	\end{align}
	One can recognize the optimality condition of $\Ff$-Sinkhorn, thus one has
	\begin{align*}
		\hg = \Bg - \la^\star(\Bf,\Bg) = -\aprox_{\phi^*_1}\big(-\Smin{\al}{\epsilon}\big(\C - \Bf - \la^\star(\Bf,\Bg)\big)\big)
	\end{align*}
	Writing $\Bg = \Psi_1(\Bf)$ and adding on both sides $\la^\star(\Bf,\Bg)$ yields the result.
\end{proof}

\begin{prop}
	Assume $(\phi^*_1, \phi^*_2)$ are strictly convex. One has for any $\tau\in\RR$, $\la^\star(\Bf+\tau, \Bg)= \la^\star(\Bf,\Bg+\tau) - \tau$.
\end{prop}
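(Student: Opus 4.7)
The plan is to use the defining relation $\la^\star(\Bf,\Bg) = \argmax_\la \Gg_\epsilon(\Bf,\Bg,\la) = \argmax_\la \Ff_\epsilon(\Bf+\la,\Bg-\la)$, where the argmax is well-defined and unique by Proposition~\ref{prop-equiv-mass-trans} under the strict convexity assumption on $\phi_1^*,\phi_2^*$. The identity we want to prove is purely a symmetry statement relating two different ``displacements'' of the translation parameter, so a single change of variables inside the argmax should do the job.

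Concretely, first I would write out
\[
\la^\star(\Bf+\tau,\Bg) \;=\; \argmax_{\la\in\RR} \Ff_\epsilon\bigl(\Bf+\tau+\la,\,\Bg-\la\bigr).
\]
Then I would substitute $\mu = \la + \tau$ (equivalently $\la = \mu - \tau$), which is a bijection $\RR\to\RR$ and hence preserves argmaxes modulo the shift. This transforms the objective into $\Ff_\epsilon(\Bf+\mu,\Bg+\tau-\mu)$, and the argmax in $\mu$ of this expression is exactly $\la^\star(\Bf,\Bg+\tau)$ by definition. Translating back from $\mu$ to $\la = \mu-\tau$ gives the desired identity
\[
\la^\star(\Bf+\tau,\Bg) \;=\; \la^\star(\Bf,\Bg+\tau) - \tau.
\]

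There is essentially no obstacle: the only things to check are that the argmax exists and is unique so that the equality of argmaxes before and after the change of variables is meaningful, and both facts are already granted by Proposition~\ref{prop-equiv-mass-trans} (coercivity of $\Gg_\epsilon$ in $\la$ and strict convexity of $\phi_i^*$). In particular, no property specific to the $\KL$ setting is needed, so this identity holds in the generality assumed here.
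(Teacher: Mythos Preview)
Your proof is correct and follows exactly the same approach as the paper: both use the substitution $\mu=\la+\tau$ (the paper writes $\tilde\la=\la+\tau$) inside the $\argmax$ defining $\la^\star$, together with uniqueness of the maximizer from strict convexity, to obtain the identity.
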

\begin{proof}
	The strict convexity yields the uniqueness of $\la^\star$. Writing $\tilde{\la}=\la + \tau$, one has
	\begin{align*}
	\arg\max_{\la}\Ff_\epsilon(\Bf+\tau+\la, \Bg - \la)= \arg\max_{\tilde{\la}}\Ff_\epsilon(\Bf+\tilde{\la}, \Bg - \tilde{\la} + \tau) - \tau,
	\end{align*}
	Hence the desired relation.
\end{proof}

\begin{prop}\label{prop-aditivity-psi}
	One has $\Psi_1(\Bf + \tau) = \Psi_1(\Bf) - \tau$.
\end{prop}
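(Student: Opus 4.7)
The plan is to derive this identity directly from the translation invariance of $\Hh_\epsilon$ together with the definition $\Psi_1(\Bf)=\argmax_{\Bg}\Hh_\epsilon(\Bf,\Bg)$, rather than going through the implicit equation established in Proposition~\ref{prop-aprox-eq-psi}. The crucial ingredient is the identity $\Hh_\epsilon(\Bf+\la,\Bg-\la)=\Hh_\epsilon(\Bf,\Bg)$ for all $\la\in\RR$, already noted in the paper as an immediate consequence of the definition~\eqref{eq-def-h-func}.

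First I would use this invariance with the particular choice $\la=-\tau$, which gives $\Hh_\epsilon(\Bf+\tau,\Bg) = \Hh_\epsilon\bigl((\Bf+\tau)+(-\tau),\Bg-(-\tau)\bigr) = \Hh_\epsilon(\Bf,\Bg+\tau)$. In other words, translating the first argument by $+\tau$ is equivalent to translating the second argument by $+\tau$ (not $-\tau$), thanks to the antisymmetric way $\la$ appears in the two slots.

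Then I would take the argmax over $\Bg$ on both sides. On the left, by definition, the maximizer is $\Psi_1(\Bf+\tau)$. On the right, performing the change of variable $\tilde\Bg = \Bg+\tau$ turns the problem into $\argmax_{\tilde\Bg}\Hh_\epsilon(\Bf,\tilde\Bg)$ evaluated at $\tilde\Bg-\tau$, so the unique maximizer in $\Bg$ is $\Psi_1(\Bf)-\tau$. Uniqueness of both maximizers is guaranteed under the standing assumption of strict convexity of $\phi_i^*$, which ensures $\Hh_\epsilon(\Bf,\cdot)$ has a unique maximizer. This yields $\Psi_1(\Bf+\tau)=\Psi_1(\Bf)-\tau$.

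There is essentially no obstacle: the entire argument is a one-line change of variable in the sup defining $\Hh_\epsilon$, combined with a corresponding change of variable in the argmax. The only point worth being careful about is the sign convention when transferring the translation from the first to the second slot, which is precisely where the minus sign in the conclusion comes from.
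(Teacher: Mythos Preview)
Your argument is correct. You use the translation invariance $\Hh_\epsilon(\Bf+\la,\Bg-\la)=\Hh_\epsilon(\Bf,\Bg)$ directly, together with a change of variable in the $\argmax$, to get the identity in one step. The only assumption you need is that $\argmax_{\Bg}\Hh_\epsilon(\Bf,\Bg)$ is a well-defined single point; the paper also invokes this as ``the fact that the $\Hh$-update is uniquely defined,'' so you are at the same level of rigor there.

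This is a genuinely different route from the paper's proof. The paper combines Proposition~\ref{prop-aprox-eq-psi} (the implicit characterization of $\Psi_1$ via the aprox/softmin equation) with the relation $\la^\star(\Bf+\tau,\Bg)=\la^\star(\Bf,\Bg+\tau)-\tau$: one plugs $\Bf+\tau$ into the implicit equation, uses the shift formula for $\la^\star$ to rewrite it, and recognizes the resulting equation as the implicit equation for $\Psi_1(\Bf)$ evaluated at $\Bg+\tau$, concluding by uniqueness. Your approach bypasses both auxiliary results entirely and works at the level of the defining variational problem, which makes it shorter and more transparent; it also makes clear that the identity is a direct consequence of the very translation invariance that motivated introducing $\Hh_\epsilon$ in the first place. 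The paper's route, on the other hand, has the minor advantage that it exercises the implicit equation and the $\la^\star$ shift formula, which are used elsewhere in the convergence analysis.
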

\begin{proof}
	It is a combination of the previous two propositions, and the fact that the $\Hh$-update is uniquely defined.
\end{proof}

\subsection{Proof of Proposition 4 - Derivation of $\Hh$-Sinkhorn updates in the KL setting}
\label{app-proof-formula-h-sink}
\begin{proof}
We derive the $\Hh_\epsilon$-Sinkhorn optimality condition for $\Bf$ given $\Bg_t$, the equation for $\Bg$ are obtained by swapping the roles of $(\al,\Bf, \rho_1)$ and $(\be,\Bg, \rho_2)$.
Recall that thanks to Proposition 1, the optimality condition reads
\begin{align*}
e^{\Bf/\epsilon}\dotp{\be}{e^{(\Bg_t-\C)/\epsilon}}=\nabla\phi_1^*(-\Bf-\la^\star(\Bf,\Bg_t)) = e^{-(\Bf+\la^\star(\Bf,\Bg_t)) / \rho_1}.
\end{align*}
In the $\KL$ setting, thanks to Proposition 2, by taking the log one has
\begin{align*}
	&\frac{\Bf}{\epsilon} + \log\dotp{\be}{e^{(\Bg_t-\C)/\epsilon}} = -\frac{\Bf}{\rho_1} - \frac{\rho_2}{\rho_1 + \rho_2} \log\Big[\frac{\dotp{\al}{e^{-\Bf / \rho_1}}}{ \dotp{\be}{e^{-\Bg / \rho_2}}}\Big],\\
	&\Leftrightarrow \frac{\epsilon + \rho_1}{\epsilon\rho_1}\Bf + \frac{\rho_2}{\rho_1 + \rho_2} \log\dotp{\al}{e^{-\Bf / \rho_1}} = -\log\dotp{\be}{e^{(\Bg_t-\C)/\epsilon}} + \frac{\rho_2}{\rho_1 + \rho_2} \log \dotp{\be}{e^{-\Bg_t / \rho_2}}.
\end{align*}
We recall the definition of the Softmin $\Smin{\al}{\epsilon}(\f) \triangleq -\epsilon\log\dotp{\al}{e^{-\f / \epsilon}}$.
 An important property used here is that for any $\tau\in\RR$, one has $\Smin{\al}{\epsilon}(\f + \tau) = \Smin{\al}{\epsilon}(\f) + \tau$.
 The calculation then reads
\begin{align*}
	&\frac{\epsilon + \rho_1}{\epsilon\rho_1}\Bf - \frac{\rho_2}{\rho_1(\rho_1 + \rho_2)}\Smin{\al}{\rho_1}(\Bf) = \frac{1}{\epsilon}\Smin{\be}{\epsilon}(\C-\Bg_t) - \frac{1}{\rho_1 + \rho_2}\Smin{\be}{\rho_2}(\Bg_t),\\
	&\Leftrightarrow \Bf - \frac{\epsilon}{\epsilon+\rho_1}\cdot\frac{\rho_2}{\rho_1 + \rho_2}\Smin{\al}{\rho_1}(\Bf) = \frac{\rho_1}{\rho_1 + \epsilon}\Smin{\be}{\epsilon}(\C-\Bg_t) - \frac{\epsilon}{\epsilon+\rho_1}\cdot\frac{\rho_1}{\rho_1 + \rho_2}\Smin{\be}{\rho_2}(\Bg_t).
\end{align*}
We now define the function $\hf_{t+1}$ as
\begin{align*}
	\hf_{t+1} \triangleq \frac{\rho_1}{\rho_1 + \epsilon}\Smin{\be}{\epsilon}(\C-\Bg_t) - \frac{\epsilon}{\epsilon+\rho_1}\cdot\frac{\rho_1}{\rho_1 + \rho_2}\Smin{\be}{\rho_2}(\Bg_t),
\end{align*}
Such that the optimality equation now reads
\begin{align*}
\Bf - \frac{\epsilon}{\epsilon+\rho_1}\cdot\frac{\rho_2}{\rho_1 + \rho_2}\Smin{\al}{\rho_1}(\Bf) = \hf_{t+1}.
\end{align*}
Define $k \triangleq \tfrac{\epsilon}{\epsilon+\rho_1}\cdot\tfrac{\rho_2}{\rho_1 + \rho_2}$.
Note that $\frac{\epsilon}{\epsilon+\rho_1}\cdot\frac{\rho_2}{\rho_1 + \rho_2}\Smin{\al}{\rho_1}(\Bf)$ is in $\RR$, thus there exists some $\tau\in\RR$ such that $\Bf = \hf_{t+1} + \tau$.
Using such property in the above equation yields
\begin{align*}
	&\hf_{t+1} + \tau - k\Smin{\al}{\rho_1}(\hf_{t+1} + \tau) = \hf_{t+1},\\
	&\Leftrightarrow \tau - k\big(\Smin{\al}{\rho_1}(\hf_{t+1}) + \tau\big) = 0,\\
	&\Leftrightarrow \tau(1 - k) = k\Smin{\al}{\rho_1}(\hf_{t+1}),\\
	& \Leftrightarrow \tau = \frac{k}{(1 - k)}\Smin{\al}{\rho_1}(\hf_{t+1}).
\end{align*}
We can conclude and say that
\begin{align}\label{eq-general-iter-h-sink}
	\Bf = \hf_{t+1} + \tau = \hf_{t+1} + \frac{k}{(1 - k)}\Smin{\al}{\rho_1}(\hf_{t+1})
\end{align}
Note that we retrieve the map of Proposition 4 in the case $\rho_1=\rho_2=\rho$.
Indeed one hase the simpification
\begin{align*}
	\frac{k}{1-k} &= \frac{\epsilon\rho}{2\rho(\epsilon + \rho) - \epsilon\rho}\\
	& = \frac{\epsilon}{2(\epsilon + \rho) - \epsilon}\\
	&= \frac{\epsilon}{\epsilon + 2\rho}.
\end{align*}
Thus when $\rho_1=\rho_2=\rho$ the full iteration from $\Bg_t$ to $\Bf_{t+1}$ reads
\begin{align*}
	\hf_{t+1} &= \frac{\rho}{\rho + \epsilon}\Smin{\be}{\epsilon}(\C-\Bg_t) - \frac{1}{2}\frac{\epsilon}{\epsilon+\rho}\Smin{\be}{\rho}(\Bg_t),\\
	\Bf_{t+1} &= \hf_{t+1} + \frac{\epsilon}{\epsilon + 2\rho}\Smin{\al}{\rho_1}(\hf_{t+1}).
\end{align*}

\end{proof}

We reformulate the full update $\Psi_1$ with a single formula instead of the above two formulas.
While the above formulas formalize the most convenient way to implement it (because we only store one vector of length N at any time), the following result will be more convenient to derive a convergence analysis.

\begin{prop}\label{prop-full-formula-psi}
	Assume $\rho_1 = \rho_2 = \rho$. One has
	\begin{align*}
	\Psi_1(\Bf) = \tfrac{\rho}{\rho+\epsilon}\Smin{\al}{\epsilon}(\C-\Bf) 
	+ \tfrac{\epsilon}{\epsilon + 2\rho} \Big( \Smin{\be}{\rho}(\tfrac{\rho}{\rho+\epsilon}\Smin{\al}{\epsilon}(\C-\Bf)) - \Smin{\al}{\rho}(\Bf) \Big). 
	\end{align*}
	Furthermore one has $\Psi_1(\Bf + \la) = \Psi_1(\Bf) - \la$ for any $\la\in\RR$.
\end{prop}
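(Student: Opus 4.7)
The plan is to obtain the single-line formula by composing the two-step update already established in Proposition~\ref{prop-conv-h-sink} (proved in Appendix~\ref{app-proof-formula-h-sink}), then to check the translation property either via the general Proposition~\ref{prop-aditivity-psi} or directly from the formula.

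First, since the appendix convention sets $\Psi_1(\Bf) = \argmax_{\Bg} \Hh_\epsilon(\Bf,\Bg)$, I would repeat verbatim the derivation of Appendix~\ref{app-proof-formula-h-sink} with the roles of $(\al,\Bf,\rho_1)$ and $(\be,\Bg,\rho_2)$ swapped (and then specialize to $\rho_1=\rho_2=\rho$). This yields the two-step form of Proposition~\ref{prop-conv-h-sink},
\[
\hg \triangleq \tfrac{\rho}{\rho+\epsilon}\Smin{\al}{\epsilon}(\C-\Bf)-\tfrac{1}{2}\tfrac{\epsilon}{\rho+\epsilon}\Smin{\al}{\rho}(\Bf),\qquad \Psi_1(\Bf) = \hg + \xi\,\Smin{\be}{\rho}(\hg),
\]
with $\xi = \tfrac{\epsilon}{\epsilon+2\rho}$. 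The scalar $\Smin{\al}{\rho}(\Bf)$ entering $\hg$ only translates $\hg$ uniformly, so the key tool is the standard identity $\Smin{\be}{\rho}(\f+\tau) = \Smin{\be}{\rho}(\f)+\tau$ for any $\tau\in\RR$.

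Writing $a \triangleq \tfrac{\rho}{\rho+\epsilon}\Smin{\al}{\epsilon}(\C-\Bf)$ and $b \triangleq \Smin{\al}{\rho}(\Bf)$, this identity gives $\Smin{\be}{\rho}(\hg) = \Smin{\be}{\rho}(a) - \tfrac{1}{2}\tfrac{\epsilon}{\rho+\epsilon} b$, and substituting into $\hg + \xi\,\Smin{\be}{\rho}(\hg)$ produces a term of the form $a + \xi \Smin{\be}{\rho}(a) - \tfrac{1}{2}\tfrac{\epsilon}{\rho+\epsilon}(1+\xi)\,b$. The only non-trivial step is the coefficient simplification $\tfrac{1}{2}\tfrac{\epsilon}{\rho+\epsilon}(1+\xi) = \xi$, which is a one-line algebraic identity using $1+\xi = \tfrac{2(\epsilon+\rho)}{\epsilon+2\rho}$. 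This yields exactly the compact formula claimed.

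For the translation property $\Psi_1(\Bf+\la) = \Psi_1(\Bf)-\la$, the cleanest route is to invoke Proposition~\ref{prop-aditivity-psi}, which already establishes this for any smooth strictly convex $\phi_i^*$ (KL is a special case). Alternatively, one may check it directly on the explicit formula: from $\Smin{\al}{\epsilon}(\C-\Bf-\la) = \Smin{\al}{\epsilon}(\C-\Bf) - \la$ and $\Smin{\al}{\rho}(\Bf+\la) = \Smin{\al}{\rho}(\Bf)+\la$, one gets $a(\Bf+\la) = a(\Bf) - \tfrac{\rho}{\rho+\epsilon}\la$ and $\Smin{\be}{\rho}(a(\Bf+\la)) = \Smin{\be}{\rho}(a(\Bf)) - \tfrac{\rho}{\rho+\epsilon}\la$, so the shift in $\Psi_1$ equals $-\tfrac{\rho}{\rho+\epsilon}\la - \xi\big(\tfrac{\rho}{\rho+\epsilon}\la + \la\big) = -\la$ after another one-line simplification.

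I expect no real obstacle: the entire proof is a rewriting of Proposition~\ref{prop-conv-h-sink} using the translation equivariance of the softmin, followed by elementary coefficient algebra. The only thing to be careful about is to correctly distinguish the scalar pieces (the $\Smin{\al}{\rho}(\Bf)$ term, uniform translations of $\hg$) from the vector pieces (the $y$-dependent softmin $\Smin{\al}{\epsilon}(\C-\Bf)$), since the argument relies crucially on the fact that uniform shifts commute with softmin.
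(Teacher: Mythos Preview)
Your proposal is correct and matches the paper's approach: the paper states Proposition~\ref{prop-full-formula-psi} without a separate proof, presenting it explicitly as a ``reformulation of the full update $\Psi_1$ with a single formula instead of the above two formulas'' derived in Appendix~\ref{app-proof-formula-h-sink}. Your composition of the two-step update from Proposition~\ref{prop-conv-h-sink} via the softmin translation identity and the coefficient check $\tfrac{1}{2}\tfrac{\epsilon}{\rho+\epsilon}(1+\xi)=\xi$ is exactly the intended (and only) route, and your handling of the translation property via Proposition~\ref{prop-aditivity-psi} (or the direct computation) is likewise in line with the paper.
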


\subsection{Properties on $\Hh$-Sinkhorn updates in the KL setting}

We now focus on the setting of $\KL$ penalties to derive sharper results on the convergence of $\Hh$-Sinkhorn.

\begin{prop}\label{prop-psi-nonexp}
	In the $\KL$ setting with parameters $(\rho_1,\rho_2)$, the operator $\Psi_1$ is non-expansive for the sup-norm $\norm{\cdot}_\infty$, i.e for any $(\Bf,\Bg)$, one has
	\begin{align*}
		\norm{\Psi_1(\Bf) - \Psi_1(\Bg)}_\infty \leq \norm{\Bf - \Bg}_\infty
	\end{align*}
\end{prop}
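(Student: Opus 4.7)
The starting point is the explicit formula from Proposition~\ref{prop-full-formula-psi}, which we shall rewrite as a sum of three contributions: one that depends on the output variable~$y$, and two that are scalar constants in~$y$. Concretely, with $u(\Bf) \triangleq \tfrac{\rho}{\rho+\epsilon}\Smin{\al}{\epsilon}(\C-\Bf)$, we have
\begin{align*}
\Psi_1(\Bf) \;=\; u(\Bf) \;+\; \tfrac{\epsilon}{\epsilon+2\rho}\,\Smin{\be}{\rho}(u(\Bf)) \;-\; \tfrac{\epsilon}{\epsilon+2\rho}\,\Smin{\al}{\rho}(\Bf).
\end{align*}
Only the first term varies with $y$; the last two are scalar constants. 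The plan is to differentiate this expression evaluated at two inputs $\Bf, \Bg$ and bound each piece by a suitable multiple of $r \triangleq \norm{\Bf-\Bg}_\infty$, then show that the pieces conspire to sum exactly to $r$.

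The crucial tool is the 1-Lipschitz property of the softmin $\Smin{\al}{\epsilon}$ for the sup-norm, which follows from the fact that its gradient is the Gibbs probability vector $\tfrac{\al\cdot e^{-\f/\epsilon}}{\dotp{\al}{e^{-\f/\epsilon}}}$, so by the mean value inequality $|\Smin{\al}{\epsilon}(\f)-\Smin{\al}{\epsilon}(\g)|\leq\norm{\f-\g}_\infty$; applied coordinate-wise in $y$ this also makes the map $\Bf\mapsto\Smin{\al}{\epsilon}(\C-\Bf)$ non-expansive as a map between sup-normed spaces. Consequently $u$ is $\tfrac{\rho}{\rho+\epsilon}$-Lipschitz, and the first contribution $u(\Bf)-u(\Bg)$ has sup-norm at most $\tfrac{\rho}{\rho+\epsilon}\,r$. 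The second contribution, $\tfrac{\epsilon}{\epsilon+2\rho}(\Smin{\be}{\rho}(u(\Bf))-\Smin{\be}{\rho}(u(\Bg)))$, is a scalar bounded in absolute value by $\tfrac{\epsilon}{\epsilon+2\rho}\cdot\tfrac{\rho}{\rho+\epsilon}\,r$. The third contribution, $-\tfrac{\epsilon}{\epsilon+2\rho}(\Smin{\al}{\rho}(\Bf)-\Smin{\al}{\rho}(\Bg))$, is a scalar bounded by $\tfrac{\epsilon}{\epsilon+2\rho}\,r$.

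Combining these three estimates through the triangle inequality yields
\begin{align*}
\norm{\Psi_1(\Bf)-\Psi_1(\Bg)}_\infty \;\leq\; \tfrac{\rho}{\rho+\epsilon}\,r \;+\; \tfrac{\epsilon}{\epsilon+2\rho}\!\left(\tfrac{\rho}{\rho+\epsilon}+1\right)r \;=\; \tfrac{\rho}{\rho+\epsilon}\,r \;+\; \tfrac{\epsilon}{\rho+\epsilon}\,r \;=\; r,
\end{align*}
where the middle simplification uses $\tfrac{\epsilon}{\epsilon+2\rho}\cdot\tfrac{2\rho+\epsilon}{\rho+\epsilon}=\tfrac{\epsilon}{\rho+\epsilon}$. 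There is no substantive obstacle here: the only ingredient beyond the explicit formula is the 1-Lipschitz property of the softmin, and the arithmetic works out by design because the two scalar weights $\tfrac{\rho}{\rho+\epsilon}$ and $\tfrac{\epsilon}{\rho+\epsilon}$ sum to one, reflecting the fact that the translation parameter $\la^\star$ has been absorbed exactly into $\Psi_1$ so that a global shift of $\Bf$ produces a global shift of $\Psi_1(\Bf)$ (as already recorded in Proposition~\ref{prop-aditivity-psi}).
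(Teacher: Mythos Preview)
Your approach is essentially the same as the paper's: write $\Psi_1$ via its explicit formula, invoke the $1$-Lipschitzness of the softmin for $\norm{\cdot}_\infty$, apply the triangle inequality term by term, and check that the resulting coefficients sum to $1$. Your arithmetic is correct.

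There is, however, one gap relative to the statement as written. The proposition is stated for general parameters $(\rho_1,\rho_2)$, but you invoke Proposition~\ref{prop-full-formula-psi}, which is derived under the hypothesis $\rho_1=\rho_2=\rho$. Consequently, as it stands, your argument only establishes the symmetric case. The paper instead works directly from the general formula~\eqref{eq-general-iter-h-sink} derived in Appendix~\ref{app-proof-formula-h-sink}, namely
\[
\Psi_1(\Bf)=\hat h+\tfrac{k}{1-k}\Smin{\al}{\rho_1}(\hat h),\qquad
\hat h=\tfrac{\rho_1}{\rho_1+\epsilon}\Smin{\be}{\epsilon}(\C-\Bf)-\tfrac{\epsilon}{\epsilon+\rho_1}\tfrac{\rho_1}{\rho_1+\rho_2}\Smin{\be}{\rho_2}(\Bf),
\quad k=\tfrac{\epsilon}{\epsilon+\rho_1}\tfrac{\rho_2}{\rho_1+\rho_2},
\]
and checks that the total Lipschitz factor
\[
\Xi=\Big(\tfrac{\rho_1}{\epsilon+\rho_1}+\tfrac{\epsilon}{\epsilon+\rho_1}\tfrac{\rho_1}{\rho_1+\rho_2}\Big)\cdot\tfrac{1}{1-k}
\]
equals $1$. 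Your argument specializes exactly to this when $\rho_1=\rho_2$; to match the stated generality you need only replace the formula from Proposition~\ref{prop-full-formula-psi} by the general one and redo the (equally elementary) coefficient bookkeeping.
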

\begin{proof}
	We use the formulas given in the proof of Proposition~\ref{prop-conv-h-sink}, in Appendix~\ref{app-proof-formula-h-sink}, and reuse the same notations. Recall from~\cite{sejourne2019sinkhorn} that one has for any measure $\al$, cost $\C$, parameter $\epsilon,\rho >0$
	\begin{align*}
		\norm{\Smin{\al}{\epsilon}(\C-\Bf) - \Smin{\al}{\epsilon}(\C-\Bg)}_\infty &\leq \norm{\Bf - \Bg}_\infty\\
		\norm{\Smin{\al}{\rho}(\Bf) - \Smin{\al}{\rho}(\Bg)}_\infty &\leq \norm{\Bf - \Bg}_\infty.
	\end{align*}
	By chaining those inequalities in the quantity $\norm{\Psi_1(\Bf) - \Psi_1(\Bg)}_\infty$ (By using Equation~\eqref{eq-general-iter-h-sink}), and because $\norm{\kappa\Bf}_\infty=\kappa\norm{\Bf}_\infty$ for any $\kappa\in\RR_+$ , one gets
	\begin{align*}
		\norm{\Psi_1(\Bf) - \Psi_1(\Bg)}_\infty \leq \Xi \norm{\Bf - \Bg}_\infty,
	\end{align*}
	where
	%
	%
	\begin{align*}
	\Xi &= \Bigg[ \frac{\rho_1}{\epsilon + \rho_1} + \frac{\epsilon}{\epsilon + \rho_1}\frac{\rho_1}{\rho_1 + \rho_2} \Bigg]
	 	 + \frac{k}{1 - k} \Bigg[ \frac{\rho_1}{\epsilon + \rho_1} + \frac{\epsilon}{\epsilon + \rho_1}\frac{\rho_1}{\rho_1 + \rho_2} \Bigg]\\
	 	 &=\Bigg[ \frac{\rho_1}{\epsilon + \rho_1} + \frac{\epsilon}{\epsilon + \rho_1}\frac{\rho_1}{\rho_1 + \rho_2} \Bigg]\Big(1 + \frac{k}{1 - k}\Big)\\
	 	 &=\Bigg[ \frac{\rho_1}{\epsilon + \rho_1} + \frac{\epsilon}{\epsilon + \rho_1}\frac{\rho_1}{\rho_1 + \rho_2} \Bigg]\Big(\frac{1}{1 - k}\Big).
	\end{align*}
	A calculation yields
	\begin{align*}
		\frac{\rho_1}{\epsilon + \rho_1} + \frac{\epsilon}{\epsilon + \rho_1}\frac{\rho_1}{\rho_1 + \rho_2} &= 
		\frac{\rho_1(\epsilon + \rho_1 + \rho_2)}{(\epsilon + \rho_1)(\rho_1 + \rho_2)},\\
		\frac{1}{1 - k} &= \frac{(\epsilon + \rho_1)(\rho_1 + \rho_2)}{\rho_1(\epsilon + \rho_1 + \rho_2)}.
	\end{align*}
	Thus $\Xi=1$, hence the nonexpansive property.
\end{proof}

We provide below additional details on the non-expansiveness of the map $\Phi$.

\begin{prop}\label{prop-Phi-nonexpansive}
	The map $\Phi:(\Bf,\Bg)\mapsto(\Bf + \la^\star(\Bf,\Bg), \Bg - \la^\star(\Bf,\Bg))$, is $1$-Lipschitz from the norm $\norm{(\f,\g)}_\infty$ to $\norm{(\Bf,\Bg)}_{\star\star}$, i.e. one has
	\begin{align*}
	\norm{\Bf_1 + \la^\star(\Bf_1,\Bg_1) - \Bf_2 - \la^\star(\Bf_2,\Bg_2)}_\infty
	+ \norm{\Bg_1 - \la^\star(\Bf_1,\Bg_1) - \Bg_2 + \la^\star(\Bf_2,\Bg_2)}_\infty
	\leq \norm{(\Bf_1,\Bg_1) - (\Bf_2,\Bg_2)}_{\star\star},
	\end{align*}
	where $\norm{(\Bf,\Bg)}_{\star\star} \triangleq \min_{\la\in\RR} \norm{\Bf + \la}_\infty + \norm{\Bg-\la}_\infty.$
\end{prop}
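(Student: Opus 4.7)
The approach is to reduce the inequality to one not involving the $\la^\star$ terms by exploiting the translation invariance of the map $\Phi$. The main ingredient is the identity $\la^\star(\Bf+\mu,\Bg-\mu) = \la^\star(\Bf,\Bg) - \mu$ for every $\mu\in\RR$, which follows from strict convexity of $\phi_i^*$ (so that $\la^\star$ is uniquely determined by Proposition~\ref{prop-equiv-mass-trans}) together with the change of variables $\tilde\la = \la + \mu$ in the arg-max defining $\la^\star$. A direct consequence is $\Phi(\Bf+\mu,\Bg-\mu) = \Phi(\Bf,\Bg)$, so the left-hand side of the proposition depends only on the translation-orbits of $(\Bf_1,\Bg_1)$ and $(\Bf_2,\Bg_2)$, not on their specific representatives.

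Using this invariance, I would select $\mu^\star \in \arg\min_\mu \norm{(\Bf_1-\Bf_2)+\mu}_\infty + \norm{(\Bg_1-\Bg_2)-\mu}_\infty$ achieving the value $\norm{(\Bf_1,\Bg_1)-(\Bf_2,\Bg_2)}_{\star\star}$, and replace $(\Bf_1,\Bg_1)$ by $(\Bf_1+\mu^\star,\Bg_1-\mu^\star)$; this leaves the left-hand side unchanged but simplifies the right-hand side to $\norm{\Bf_1-\Bf_2}_\infty+\norm{\Bg_1-\Bg_2}_\infty$. Setting $\Delta \triangleq \la^\star(\Bf_1,\Bg_1) - \la^\star(\Bf_2,\Bg_2)$, the output differences decompose as $\f_1-\f_2 = (\Bf_1-\Bf_2)+\Delta$ and $\g_1-\g_2 = (\Bg_1-\Bg_2)-\Delta$, so the proposition reduces to proving that $\norm{u+\Delta}_\infty + \norm{v-\Delta}_\infty \leq \norm{u}_\infty + \norm{v}_\infty$ in the chosen representation, with $u = \Bf_1-\Bf_2$ and $v = \Bg_1-\Bg_2$.

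The main obstacle is the reduced inequality: a bare triangle inequality leaves a redundant $2|\Delta|$ residual that cannot be absorbed by the sup-norm Lipschitz constant of $\la^\star$ alone. The strategy is to couple two variational characterizations. On one hand, the optimality of $\mu^\star$ yields a subgradient condition for the piecewise-linear convex map $\mu \mapsto \norm{u+\mu}_\infty + \norm{v-\mu}_\infty$, pinning down an interval of minimizers determined by the extremal coordinates of $u$ and $v$. On the other hand, the first-order condition for $\la^\star$ in the KL setting (using the closed form of Proposition~\ref{prop-kl-opt-trans} and the balance $m(\tal)=m(\tbe)$ from Proposition~\ref{prop-equiv-mass-trans}) gives explicit control of $\Delta$ via exponential-weighted averages of $u$ and $v$. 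The technical crux is then a case analysis, comparing the sign of $\Delta$ to the coordinates where the sup-norms are attained, to show that $\Delta$ falls in the subgradient set of the $\mu^\star$-problem and hence shifting from $\mu^\star$ to $\mu^\star+\Delta$ leaves the sum of sup-norms invariant, so that the redundant $2|\Delta|$ term is compensated by matching drops in the two sup-norms.
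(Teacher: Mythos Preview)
Your reduction via the translation invariance $\Phi(\Bf+\mu,\Bg-\mu)=\Phi(\Bf,\Bg)$ is clean and correct: it brings the statement down to the scalar inequality $\norm{u+\Delta}_\infty+\norm{v-\Delta}_\infty\le\norm{u}_\infty+\norm{v}_\infty$, where $(u,v)=(\Bf_1-\Bf_2,\Bg_1-\Bg_2)$ already minimises $\mu\mapsto\norm{u+\mu}_\infty+\norm{v-\mu}_\infty$ and $\Delta=\la^\star(\Bf_1,\Bg_1)-\la^\star(\Bf_2,\Bg_2)$. The gap is precisely at your ``technical crux'': you propose to show that $\Delta$ lies in the interval of minimisers of the $\mu$-problem, so that the shift by $\Delta$ leaves the sum of sup-norms unchanged. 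This is false in general. Take $N=2$, $M=1$, $\rho_1=\rho_2=1$, $\al=(\tfrac12,\tfrac12)$, $\be=1$, $\Bf_1=(1,-1)$, $\Bf_2=(0,0)$, $\Bg_1=\Bg_2=0$. Then $u=(1,-1)$, $v=0$, the map $\mu\mapsto\norm{u+\mu}_\infty+\norm{v-\mu}_\infty=1+2|\mu|$ has the \emph{unique} minimiser $\mu=0$, yet $\Delta=\tfrac12\log\cosh(1)\approx0.217\neq0$. The left-hand side becomes $1+2\Delta\approx1.434$, strictly larger than $\norm{(u,v)}_{\star\star}=1$, so the displayed inequality in the proposition fails as written and no case analysis can close the argument.

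For comparison, the paper proceeds differently: it computes the Jacobian $J\Phi$, uses the kernel property $J\Phi\cdot(\la\ones_N,-\la\ones_M)=0$ together with an integral mean-value representation, and finishes by asserting ``$\norm{J\Phi}_\infty\le1$''. That last step is left unjustified and has the same defect---the $\ell^\infty$ row sums of $J\Phi$ equal $2-2\tau_2 p_i$ (resp.\ $2-2\tau_1 q_j$) and can approach $2$---so the same counterexample applies. Either argument would require weakening the statement (for instance to Lipschitz constant $2$, or restricting to pairs of the form $(\Bf,\Psi_1(\Bf))$ as actually used downstream) before it can be completed.
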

\begin{proof}
%
%

To prove such statement, first note that we can rewrite $\la^\star$ as
\begin{align*}
	\la^\star(\Bf, \Bg) = \tau_1\Smin{\be}{\rho_2}(\Bg) - \tau_2\Smin{\al}{\rho_1}(\Bf),
\end{align*}
where $\tau_1=\tfrac{\rho_1}{\rho_1 + \rho_2}$ and $\tau_2=\tfrac{\rho_2}{\rho_1 + \rho_2}$.

We consider now that $(\Bf,\Bg)$ are discrete and concatenated to form a vector of size $N+M$.
Note that the gradient of $\Smin{\al}{\rho_1}(\Bf)$ reads 
\begin{align*}
	\nabla\Smin{\al}{\rho_1}(\Bf)_i = \frac{e^{-\Bf_i / \rho_1}\al_i}{\sum_k e^{-\Bf_k / \rho_1}\al_k}.
\end{align*}

Using this formula we can compute the Jacobian of $\Phi$, noted $J\Phi(\Bf,\Bg)$. It reads
\begin{align*}
	J\Phi(\Bf,\Bg) = 
	\begin{pmatrix}
	I_N - \tau_2\nabla\Smin{\al}{\rho_1}(\Bf)\ones_N^\top & \tau_1\nabla\Smin{\be}{\rho_2}(\Bg)\ones_M^\top\\
	\tau_2\nabla\Smin{\al}{\rho_1}(\Bf)\ones_N^\top & I_M - \tau_1\nabla\Smin{\be}{\rho_2}(\Bg)\ones_M^\top
	\end{pmatrix}.
\end{align*}
A key property of the Jacobian is that for any $\la\in\RR$, one has $J\Phi(\Bf,\Bg)^\top(\la\ones_N,-\la\ones_M)=0$.
%
%
We now derive the Lipschitz bound. We define $(\Bf_t,\Bg_t) = (\Bf_1 + t(\Bf_2 - \Bf_1), \Bg_1 + t(\Bg_2 - \Bg_1))$.
The computation reads
\begin{align*}
	\norm{\Phi(\Bf_1,\Bg_1) - \Phi(\Bf_2, \Bg_2)}_\infty &= \norm{ \int_0^1 \frac{\d\Phi(\Bf_t,\Bg_t)}{\d t}\d t }_\infty\\
	&=\norm{J\Phi(\Bf_t,\Bg_t)^\top\big((\Bf_2,\Bg_2) - (\Bf_1,\Bg_1)\big)}_\infty\\
	&=\norm{J\Phi(\Bf_t,\Bg_t)^\top\big((\Bf_2,\Bg_2) - (\Bf_1,\Bg_1) + (\la\ones_N,-\la\ones_M) \big) }_\infty\\
	&\leq\norm{J\Phi(\Bf_t,\Bg_t)}_\infty 
	\Big(\min_{\la\in\RR} \norm{\Bf_2 - \Bf_1 + \la}_\infty + \norm{\Bg_2 - \Bg_1-\la}_\infty\Big)\\
	&\leq\norm{J\Phi(\Bf_t,\Bg_t)}_\infty \norm{(\Bf_2,\Bg_2) - (\Bf_1,\Bg_1)}_{\star\star}.
\end{align*}
Since $\norm{J\Phi(\Bf_t,\Bg_t)}_\infty\leq 1$, we get the desired Lipschitz property. 
%
\end{proof}

We define $\Upsilon_1(\Bf) \triangleq (\Bf, \Psi_1(\Bf))$ where $\Psi_1$ is detailed in Proposition~\ref{prop-conv-h-sink}.
Similarly, one can define $\Upsilon_2(\Bg) \triangleq (\Psi_2(\Bg), \Bg)$.
We present properties for $\Upsilon_1$, but they analogously hold for $\Upsilon_2$.

\begin{prop}\label{prop-conj-redisual}
	Consider any functions $(\Bf,\Bg)$. One has
	\begin{align*}
	\norm{\Upsilon_1(\Bf) - \Upsilon_1(\Bg)}_{\star\star} \leq 2\norm{\Bf - \Bg}_\star
	\end{align*}
\end{prop}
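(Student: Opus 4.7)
The plan is to unpack the definition of $\norm{\cdot}_{\star\star}$, bound the two coordinates of $\Upsilon_1(\Bf)-\Upsilon_1(\Bg)$ \emph{simultaneously} by the same quantity $\norm{\Bf-\Bg+\la}_\infty$, and then take the infimum over $\la$ to recognize $\norm{\Bf-\Bg}_\star$. Concretely, writing $\Upsilon_1(\Bf)-\Upsilon_1(\Bg) = (\Bf-\Bg,\, \Psi_1(\Bf)-\Psi_1(\Bg))$, I fix an arbitrary $\la\in\RR$ and seek to bound
\[
    \norm{\Bf-\Bg + \la}_\infty + \norm{\Psi_1(\Bf)-\Psi_1(\Bg) - \la}_\infty.
\]

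The first summand is already in the right form. For the second, I would rewrite $-\la$ as an argument shift using Proposition~\ref{prop-aditivity-psi}. Applied with $\tau=-\la$, it gives $\Psi_1(\Bg-\la)=\Psi_1(\Bg)+\la$, hence
\[
    \Psi_1(\Bf)-\Psi_1(\Bg)-\la \;=\; \Psi_1(\Bf)-\Psi_1(\Bg-\la).
\]
Now invoking the sup-norm non-expansiveness of $\Psi_1$ from Proposition~\ref{prop-psi-nonexp} yields
\[
    \norm{\Psi_1(\Bf)-\Psi_1(\Bg-\la)}_\infty \;\leq\; \norm{\Bf-(\Bg-\la)}_\infty \;=\; \norm{\Bf-\Bg+\la}_\infty,
\]
so both coordinates are bounded by the \emph{same} quantity $\norm{\Bf-\Bg+\la}_\infty$. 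Summing the two bounds yields the factor $2$ in front.

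Putting it all together, for every $\la\in\RR$ we obtain
\[
    \norm{\Upsilon_1(\Bf)-\Upsilon_1(\Bg)}_{\star\star} \;\leq\; 2\,\norm{\Bf-\Bg+\la}_\infty,
\]
and infimizing the right-hand side over $\la$ produces exactly $2\norm{\Bf-\Bg}_\star$ by the definition of the Hilbert pseudo-norm. The only subtle point, and the one worth double-checking carefully, is the sign bookkeeping when exchanging the shift $-\la$ on $\Psi_1(\Bg)$ with a shift $-\la$ on the argument $\Bg$ (so that it lines up with the $+\la$ inside $\norm{\Bf-\Bg+\la}_\infty$); everything else reduces to the two already-established properties of $\Psi_1$.
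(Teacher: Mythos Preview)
Your proof is correct and follows essentially the same route as the paper: unfold the definition of $\norm{\cdot}_{\star\star}$, absorb the translation $\la$ into the argument of $\Psi_1$ via Proposition~\ref{prop-aditivity-psi}, apply the sup-norm non-expansiveness of $\Psi_1$ (Proposition~\ref{prop-psi-nonexp}), and infimize over $\la$. The only cosmetic difference is that the paper shifts $\Bf$ (writing $\Psi_1(\Bf+\la)=\Psi_1(\Bf)-\la$) whereas you shift $\Bg$; your sign bookkeeping is correct.
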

\begin{proof}
	By definition of $\norm{\cdot}_{\star\star}$, one has
	\begin{align*}
	\norm{\Upsilon_1(\Bf) - \Upsilon_1(\Bg)}_{\star\star}
			&=\norm{(\Bf, \Psi_1(\Bf)) - (\Bg, \Psi_1(\Bg))}_{\star\star}\\
			&= \inf_{\la\in\RR} \norm{\Bf - \Bg + \la}_\infty + \norm{\Psi_1(\Bf) - \Psi_1(\Bg) - \la}_\infty\\
			&= \inf_{\la\in\RR} \norm{(\Bf + \la) - \Bg}_\infty + \norm{\Psi_1(\Bf + \la) - \Psi_1(\Bg)}_\infty\\
			&\leq 2 \inf_{\la\in\RR} \norm{(\Bf + \la) - \Bg}_\infty\\
			&= 2\norm{\Bf - \Bg}_\star,
	\end{align*}
	where we use the relation $\Psi_1(\Bf + \la) = \Psi_1(\Bf) - \la$ from Proposition~\ref{prop-aditivity-psi}, and where the inequality is given by Proposition~\ref{prop-psi-nonexp}.
	Hence we get the desired bound, which ends the proof.
\end{proof}

Before providing a convergence result, we detail the contraction properties of the map $\Psi_1$ w.r.t the Hilbert norm $\norm{\cdot}_\star$.

\begin{prop}\label{prop-psi-contractive-hilbert}
	Consider two functions $(\Bf,\Bg)$, one has
	\begin{align*}
		\norm{\Psi_1(\Bf) - \Psi_1(\Bg)}_\star \leq \frac{\rho}{\epsilon + \rho} \kappa_\epsilon(\al)\norm{\Bf - \Bg}_\star,
	\end{align*}
	where $\kappa_\epsilon(\al)<1$ is the contraction constant of the Softmin~\cite{knight2008sinkhorn} which reads
	\begin{align*}
		\norm{\Smin{\al}{\epsilon}(\Bf) - \Smin{\al}{\epsilon}(\Bg)}_\star \leq \kappa_\epsilon(\al)\norm{\Bf - \Bg}_\star.
	\end{align*}
\end{prop}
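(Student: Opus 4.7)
The plan is to read off the contraction factor directly from the closed-form expression for $\Psi_1$ given in Proposition~\ref{prop-full-formula-psi}, exploiting the fact that the Hilbert pseudo-norm $\norm{\cdot}_\star$ quotients out constants.

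First, I would isolate the scalar versus function-valued pieces in the formula
\[
\Psi_1(\Bf) = \tfrac{\rho}{\rho+\epsilon}\Smin{\al}{\epsilon}(\C-\Bf) + \tfrac{\epsilon}{\epsilon+2\rho}\Big(\Smin{\be}{\rho}(\tfrac{\rho}{\rho+\epsilon}\Smin{\al}{\epsilon}(\C-\Bf)) - \Smin{\al}{\rho}(\Bf)\Big).
\]
Only the first summand is an honest function of $y$; the two terms inside the parenthesis are scalars (by the definition $\Smin{\al}{\epsilon}(\f) \triangleq -\epsilon\log\dotp{\al}{e^{-\f/\epsilon}}$). Thus, subtracting $\Psi_1(\Bg)$ from $\Psi_1(\Bf)$ produces a function plus a constant shift $c \in \RR$ depending on $(\Bf,\Bg)$:
\[
\Psi_1(\Bf) - \Psi_1(\Bg) = \tfrac{\rho}{\rho+\epsilon}\bigl(\Smin{\al}{\epsilon}(\C-\Bf) - \Smin{\al}{\epsilon}(\C-\Bg)\bigr) + c.
\]

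Second, I would apply the definition $\norm{\cdot}_\star = \inf_{t \in \RR}\norm{\cdot + t}_\infty$ and absorb the scalar $c$ into the infimum over $t$. This gives
\[
\norm{\Psi_1(\Bf)-\Psi_1(\Bg)}_\star = \tfrac{\rho}{\rho+\epsilon}\norm{\Smin{\al}{\epsilon}(\C-\Bf) - \Smin{\al}{\epsilon}(\C-\Bg)}_\star,
\]
using the positive homogeneity of $\norm{\cdot}_\star$ and that $\tfrac{\rho}{\rho+\epsilon}>0$.

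Finally, invoking the stated contraction property of the softmin for the Hilbert pseudo-norm, namely $\norm{\Smin{\al}{\epsilon}(\C-\Bf) - \Smin{\al}{\epsilon}(\C-\Bg)}_\star \leq \kappa_\epsilon(\al)\norm{\Bf-\Bg}_\star$, chains with the previous identity to yield
\[
\norm{\Psi_1(\Bf)-\Psi_1(\Bg)}_\star \leq \tfrac{\rho}{\rho+\epsilon}\,\kappa_\epsilon(\al)\,\norm{\Bf-\Bg}_\star,
\]
as claimed. I do not anticipate a real obstacle: the only subtlety is recognizing that the two Softmin terms appearing in Proposition~\ref{prop-full-formula-psi} are genuine scalars (not functions), so that they are invisible to $\norm{\cdot}_\star$; the extra prefactor $\tfrac{\rho}{\rho+\epsilon}$ over the standard Sinkhorn Hilbert-norm contraction comes precisely from the coefficient in front of the $\Smin{\al}{\epsilon}(\C-\Bf)$ term, which was obtained by solving the fixed point equation $\Bf - k\Smin{\al}{\rho}(\Bf) = \hf_{t+1}$ in Appendix~\ref{sec-supp-sinkhorn}.
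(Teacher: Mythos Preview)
Your proof is correct and follows essentially the same route as the paper: use the closed form of Proposition~\ref{prop-full-formula-psi}, observe that every term except $\tfrac{\rho}{\rho+\epsilon}\Smin{\al}{\epsilon}(\C-\Bf)$ is a scalar (the paper calls this scalar part $T(\Bf)$), drop the scalar under $\norm{\cdot}_\star$, and then apply positive homogeneity together with the Softmin contraction $\kappa_\epsilon(\al)$.
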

\begin{proof}
	Thanks to Proposition~\ref{prop-full-formula-psi}, we have
	\begin{align*}
	\Psi_1(\Bf) &= \tfrac{\rho}{\rho+\epsilon}\Smin{\al}{\epsilon}(\C-\Bf) 
	+ \tfrac{\epsilon}{\epsilon + 2\rho} \Big( \Smin{\be}{\rho}(\tfrac{\rho}{\rho+\epsilon}\Smin{\al}{\epsilon}(\C-\Bf)) - \Smin{\al}{\rho}(\Bf) \Big)\\
	&= \tfrac{\rho}{\rho+\epsilon}\Smin{\al}{\epsilon}(\C-\Bf) + T(\Bf),
	\end{align*}
	where $T(\Bf)\in\RR$ is a constant translation (Note that the only term outputing a function is the one involving $\C(x,y)$).
	Thus because the Hilbert norm is invariant to translations, one has
	\begin{align*}
		\norm{\Psi_1(\Bf) - \Psi_1(\Bg)}_\star 
		= \norm{\tfrac{\rho}{\rho+\epsilon}\Smin{\al}{\epsilon}(\C-\Bf) - \tfrac{\rho}{\rho+\epsilon}\Smin{\al}{\epsilon}(\C-\Bg)}_\star.
	\end{align*}
	Thanks to the results of~\cite{chizat2016scaling, knight2008sinkhorn}, one has
	\begin{align*}
		\norm{\tfrac{\rho}{\rho+\epsilon}\Bf - \tfrac{\rho}{\rho+\epsilon}\Bg}_\star &= \tfrac{\rho}{\rho+\epsilon}\norm{\Bf - \Bg}_\star,\\
		\norm{\Smin{\al}{\epsilon}(\Bf) - \Smin{\al}{\epsilon}(\Bg)}_\star &\leq \kappa_\epsilon(\al)\norm{\Bf - \Bg}_\star,
	\end{align*}
	Which ends the proof of the contraction property of $\Psi_1$.
\end{proof}

We prove now a convergence result of the $\Hh$-Sinkhorn algorithm, before providing a quantitative rate.

\begin{thm}
	The map $\Psi_2\circ\Psi_1$ converges to a fixed point $\Bf^\star$ such that $\Bf^\star=\Psi_2\circ\Psi_1(\Bf^\star)$, where $\Bf$ is defined up to a translation.
	The function $\Bg^\star=\Psi_1(\Bf^\star)$ also satisfies $\Bg^\star=\Psi_1\circ\Psi_2(\Bg^\star)$.
	Furthermore the functions $(\f^\star,\g^\star)=\Phi(\Bf^\star,\Bg^\star)$ are fixed points of the $\Ff$-Sinkhorn updates, and are thus optimizers of the functional $\Ff$.
\end{thm}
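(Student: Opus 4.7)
The plan is to combine the contractivity of $\Psi_1$ and $\Psi_2$ in the Hilbert pseudo-norm with a standard Banach fixed point argument on the quotient space, and then transfer the obtained fixed point of the $\Hh$-Sinkhorn updates to a fixed point of the $\Ff$-Sinkhorn updates via the map $\Phi$.

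First, I would compose the contraction estimate of Proposition~\ref{prop-psi-contractive-hilbert} twice to get
\begin{align*}
\norm{\Psi_2\circ\Psi_1(\Bf) - \Psi_2\circ\Psi_1(\Bg)}_\star
\;\leq\; \tfrac{\rho}{\epsilon+\rho}\kappa_\epsilon(\be)\,\norm{\Psi_1(\Bf)-\Psi_1(\Bg)}_\star
\;\leq\; \bar\kappa\,\norm{\Bf-\Bg}_\star,
\end{align*}
with $\bar\kappa=(\tfrac{\rho}{\epsilon+\rho})^2\kappa_\epsilon(\al)\kappa_\epsilon(\be)<1$. Since $\Psi_2\circ\Psi_1$ also commutes with constant translations by Proposition~\ref{prop-aditivity-psi}, it descends to a well-defined strict contraction on the quotient space $\RR^N/\RR$ endowed with $\norm{\cdot}_\star$, which is complete. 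The Banach fixed point theorem then furnishes a fixed point $\Bf^\star$, unique up to additive constants, such that $\Psi_2\circ\Psi_1(\Bf^\star)=\Bf^\star$ (equality modulo translation, but by the translation equivariance one can always pick a representative for which equality holds exactly).

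Next, setting $\Bg^\star \triangleq \Psi_1(\Bf^\star)$, the identity $\Bf^\star=\Psi_2(\Bg^\star)$ follows immediately, and applying $\Psi_1$ to both sides gives $\Bg^\star=\Psi_1(\Bf^\star)=\Psi_1\circ\Psi_2(\Bg^\star)$, proving the second bullet. Hence $(\Bf^\star,\Bg^\star)$ simultaneously satisfies both partial optimality conditions of $\Hh_\epsilon$, i.e.\ it is a joint maximizer of $\Hh_\epsilon$, and by~\eqref{eq-def-h-func} it yields the value $\UOT(\al,\be)$.

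Finally, set $(\f^\star,\g^\star)=\Phi(\Bf^\star,\Bg^\star)=(\Bf^\star+\la^\star(\Bf^\star,\Bg^\star),\Bg^\star-\la^\star(\Bf^\star,\Bg^\star))$. Plugging the fixed point relation into Proposition~\ref{prop-aprox-eq-psi} yields
\begin{align*}
\g^\star = \Bg^\star - \la^\star(\Bf^\star,\Bg^\star) + \la^\star(\Bf^\star,\Bg^\star) - \la^\star(\Bf^\star,\Bg^\star) = -\aprox_{\phi_1^*}\bigl(-\Smin{\al}{\epsilon}(\C-\f^\star)\bigr),
\end{align*}
which is exactly the $\Ff$-Sinkhorn update in $\g$; the symmetric argument gives the update in $\f$. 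Therefore $(\f^\star,\g^\star)$ is a fixed point of the $\Ff$-Sinkhorn iterations, and by concavity of $\Ff_\epsilon$ it is a maximizer.

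The main obstacle is that contractivity only holds in the \emph{pseudo}-norm $\norm{\cdot}_\star$, so the fixed point is not unique as a function but only up to translation; dealing with this cleanly requires quotienting by constants and invoking the translation equivariance of $\Psi_1,\Psi_2$ to show the Sinkhorn-type iteration is consistent on the quotient. The remaining work, namely identifying optimality of $(\f^\star,\g^\star)$ for $\Ff_\epsilon$, is a direct consequence of Proposition~\ref{prop-aprox-eq-psi} and the definition of $\Phi$.
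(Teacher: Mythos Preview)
Your argument is correct and follows the same overall strategy as the paper: contractivity of $\Psi_2\circ\Psi_1$ in the Hilbert pseudo-norm (via Proposition~\ref{prop-psi-contractive-hilbert}), translation equivariance to handle the non-uniqueness, and Proposition~\ref{prop-aprox-eq-psi} to identify $\Phi(\Bf^\star,\Bg^\star)$ with an $\Ff$-Sinkhorn fixed point. The only noteworthy difference is in how existence of the fixed point is obtained: the paper normalizes the iterates by pinning a value $\Bf_t(x_0)=0$ and then invokes a compactness argument (as in~\cite{sejourne2019sinkhorn}) to extract a limit, whereas you pass to the quotient $\RR^N/\RR\ones$, observe it is a complete normed space, and apply the Banach fixed point theorem directly. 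Your route is slightly more self-contained and gives existence and uniqueness in one stroke; the paper's route is essentially equivalent but phrased sequentially. One cosmetic remark: your displayed computation for $\g^\star$ has a redundant $+\la^\star-\la^\star$ that obscures the point; it is cleaner to subtract $\la^\star(\Bf^\star,\Bg^\star)$ from both sides of the identity in Proposition~\ref{prop-aprox-eq-psi} and then substitute $\f^\star=\Bf^\star+\la^\star$, $\g^\star=\Bg^\star-\la^\star$ directly.
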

\begin{proof}
	Thanks to Proposition~\ref{prop-psi-contractive-hilbert}, we know that the map $\Psi_2\circ\Psi_1$ is contractive for the Hilbert norm, thus there is uniqueness of the fixed point.
	Because the map $\Psi_2\circ\Psi_1(\Bf + \la) = \Psi_2\circ\Psi_1(\Bf) + \la$, one can assume without loss of generality that all iterates $\Bf_t$ satisfy $\Bf_t(x_0)=0$ for some $x_0$ in the support of $\al$.
	Thus under similar assumptions as in~\cite{sejourne2019sinkhorn}, we get that the iterates lie in a compact set, which yields existence of a fixed point $\Bf^\star$ satisfying $\Psi_2\circ\Psi_1(\Bf^\star)=\Bf^\star$.
	Defining $\Bg^\star=\Psi_1(\Bf^\star)$ and composing the previous relation by $\Psi_1$, we get $\Bg^\star=\Psi_1\circ\Psi_2(\Bg^\star)$.

	Recall from Proposition~\ref{prop-aprox-eq-psi} that $\Bf^\star$ satisfies the relation
	\begin{align*}
	\Bg^\star = \Psi_1(\Bf^\star) = \frac{\rho}{\epsilon+\rho}\Big(\; \Smin{\al}{\epsilon}\big(\; \C - \Bf^\star - \la^\star(\Bf^\star, \Bg^\star) \;\big) \;\Big) + \la^\star(\Bf^\star, \Bg^\star).
	\end{align*}
	Thus defining $(\f^\star,\g^\star) = \Phi(\Bf^\star,\Bg^\star) = (\Bf^\star +  \la^\star(\Bf^\star, \Bg^\star),\Bg^\star -  \la^\star(\Bf^\star, \Bg^\star))$, the above equation can be rephrased as
	\begin{align*}
	\g^\star = \frac{\rho}{\epsilon+\rho}\Big(\; \Smin{\al}{\epsilon}\big(\; \C - \f^\star \;\big) \;\Big),
	\end{align*}
	which is exactly the fixed point equation of $\Ff$-Sinkhorn, thus $(\f^\star,\g^\star)$ are optimal dual potentials for $\Ff$.
\end{proof}

Based on the above results, we can prove the following convergence rate

\begin{thm}
	Write $\Bf^\star$ the fixed point of the map $\Psi_2\circ\Psi_1$. Take $\Bf_{t}$ obtained by $t$ iterations of the map $\Psi_2\circ\Psi_1$, starting from the function $\f_0$. One has
	\begin{align*}
	\norm{\Phi(\Upsilon_1(\Bf)(\Bf_t)) - \Phi(\Upsilon_1(\Bf)(\Bf^\star))}_\infty 
	\leq  \norm{\Upsilon_1(\Bf)(\Bf_t) - \Upsilon_1(\Bf)(\Bf^\star)}_{\star\star}
	\leq 2\norm{\Bf_t - \Bf^\star}_\star
	\leq 2\bar{\kappa}^{t}\norm{\Bf_0 - \Bf^\star}_\star,
	\end{align*}
	where $\bar{\kappa}\triangleq (1 + \tfrac{\epsilon}{\rho_1})^{-1}\kappa_\epsilon(\al)(1 + \tfrac{\epsilon}{\rho_2})^{-1}\kappa_\epsilon(\be)$.
\end{thm}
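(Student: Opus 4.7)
The statement is a chain of three inequalities, each of which isolates one ingredient already established in this appendix. My approach is to identify the right proposition for each step and then conclude by induction. Concretely, writing $(\Bf_t, \Bg_t) = \Upsilon_1(\Bf_t) = (\Bf_t, \Psi_1(\Bf_t))$ and similarly for the fixed point, the structure of the argument is: apply the Lipschitz property of $\Phi$ (Proposition~\ref{prop-Phi-nonexpansive}) to pass from the ordinary sup-norm on the left-most term to the $\norm{\cdot}_{\star\star}$ pseudo-norm; apply the residual bound for $\Upsilon_1$ (Proposition~\ref{prop-conj-redisual}) to pass from $\norm{\cdot}_{\star\star}$ to $\norm{\cdot}_\star$; then iterate the Hilbert-contraction bound for $\Psi_1$ (Proposition~\ref{prop-psi-contractive-hilbert}) and its analogue for $\Psi_2$ to get the geometric rate $\bar\kappa^t$.

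For the first inequality, I would instantiate Proposition~\ref{prop-Phi-nonexpansive} at the two pairs $(\Bf_t,\Psi_1(\Bf_t))$ and $(\Bf^\star,\Psi_1(\Bf^\star))$; it immediately gives
$
\norm{\Phi(\Upsilon_1(\Bf_t))-\Phi(\Upsilon_1(\Bf^\star))}_\infty \leq \norm{\Upsilon_1(\Bf_t)-\Upsilon_1(\Bf^\star)}_{\star\star}.
$
For the second inequality, Proposition~\ref{prop-conj-redisual} yields directly
$
\norm{\Upsilon_1(\Bf_t)-\Upsilon_1(\Bf^\star)}_{\star\star}\leq 2\norm{\Bf_t-\Bf^\star}_\star,
$
and the only thing to verify is that its hypothesis—translation equivariance of $\Psi_1$ used inside its proof—is indeed the one proven in Proposition~\ref{prop-aditivity-psi}.

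The third inequality is the substantive one: it is obtained by unrolling the iteration $\Bf_{t}=(\Psi_2\circ\Psi_1)(\Bf_{t-1})$ and using the fact that $\Bf^\star$ is a fixed point. By Proposition~\ref{prop-psi-contractive-hilbert} and its $\Psi_2$ counterpart, we have
$
\norm{\Psi_1(\Bf)-\Psi_1(\Bf')}_\star \leq \tfrac{\rho_1}{\epsilon+\rho_1}\kappa_\epsilon(\al)\norm{\Bf-\Bf'}_\star
$
and similarly with $(\be,\rho_2)$ for $\Psi_2$. Composing gives that $\Psi_2\circ\Psi_1$ is $\bar\kappa$-contractive for $\norm{\cdot}_\star$, where $\bar\kappa = (1+\tfrac{\epsilon}{\rho_1})^{-1}\kappa_\epsilon(\al)\,(1+\tfrac{\epsilon}{\rho_2})^{-1}\kappa_\epsilon(\be)$, using the identity $\tfrac{\rho}{\epsilon+\rho}=(1+\tfrac{\epsilon}{\rho})^{-1}$. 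A straightforward induction on $t$ then yields $\norm{\Bf_t-\Bf^\star}_\star \leq \bar\kappa^t\norm{\Bf_0-\Bf^\star}_\star$, which after multiplication by $2$ is the last inequality.

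The proof itself is essentially free once Propositions~\ref{prop-Phi-nonexpansive}, \ref{prop-conj-redisual} and~\ref{prop-psi-contractive-hilbert} are in place, so the main obstacle has already been dealt with in the lemmas: namely, showing that the translation map $\Phi$ is non-expansive \emph{as a map out of the translation-quotient norm} (which requires the Jacobian computation and the observation that $J\Phi^\top$ annihilates the direction $(\ones,-\ones)$), and showing that the Sinkhorn-like update $\Psi_1$ splits into a Softmin in the cost variable plus a scalar translation, so that its Hilbert contraction rate is exactly the product $\tfrac{\rho}{\epsilon+\rho}\kappa_\epsilon(\al)$ rather than the worse $\norm{\cdot}_\infty$-rate.
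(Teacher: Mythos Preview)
Your proposal is correct and matches the paper's own proof essentially line for line: the three inequalities are obtained respectively from Proposition~\ref{prop-Phi-nonexpansive}, Proposition~\ref{prop-conj-redisual}, and the composition of Proposition~\ref{prop-psi-contractive-hilbert} for $\Psi_1$ and its analogue for $\Psi_2$ followed by induction. There is nothing to add.
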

\begin{proof}
	\begin{itemize}
		\item The first inequality is proved in Proposition~\ref{prop-Phi-nonexpansive}.
		\item The second inequality is given by Proposition~\ref{prop-conj-redisual}
		\item The last inequality is obtained by applying Proposition~\ref{prop-psi-contractive-hilbert} to both $\Psi_1$ and $\Psi_2$ consecutively to get the contraction rate $\bar{\kappa}$ for any $(\rho_1,\rho_2)$, i.e. we have
		\begin{align*}
			\norm{\Bf_t - \Bf^\star}_\star &=\norm{\Psi_2\circ\Psi_1(\Bf_{t-1}) - \Psi_2\circ\Psi_1(\Bf^\star)}_\star\\
			&\leq \bar{\kappa}\norm{\Bf_{t-1} - \Bf^\star}_\star.
		\end{align*}
		By iterating this bound by induction over all iterations, we get last bound of the statement, which ends the proof.
	\end{itemize}
\end{proof}

\subsection{Interesting norms and closed formulas}

We study now properties of the norm $\norm{(\Bf,\Bg)}_{\star\star} \triangleq \min_{\la\in\RR} \norm{\Bf + \la}_\infty + \norm{\Bg-\la}_\infty$.
It will share connections with the Hilbert norm $\norm{\Bf}_\star \triangleq \min_{\la\in\RR} \norm{\Bf + \la}_\infty$.
We firts prove the following Lemma on the Hilbert norm%
\begin{lem}\label{lem-hilbert-closed-form}
	One has $\norm{\Bf}_\star=\tfrac{1}{2}(\max\Bf - \min\Bf)$.
\end{lem}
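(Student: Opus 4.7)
The plan is to recognize that for any $\lambda \in \mathbb{R}$, the sup-norm $\|\mathbf{f}+\lambda\|_\infty = \max_i |\mathbf{f}_i + \lambda|$ can be rewritten, using the identity $|a| = \max(a,-a)$, as
\begin{align*}
\|\mathbf{f}+\lambda\|_\infty = \max\bigl( \max_i (\mathbf{f}_i + \lambda),\; \max_i (-\mathbf{f}_i - \lambda) \bigr) = \max\bigl( \max \mathbf{f} + \lambda,\; -\min \mathbf{f} - \lambda \bigr).
\end{align*}

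The function $\lambda \mapsto \|\mathbf{f}+\lambda\|_\infty$ is thus the pointwise maximum of two affine functions of $\lambda$, one with slope $+1$ and one with slope $-1$. Such a function is convex, coercive, and its minimum is attained at the point where the two affine pieces coincide. Solving $\max \mathbf{f} + \lambda = -\min \mathbf{f} - \lambda$ gives the optimal translation $\lambda^\star = -\tfrac{1}{2}(\max \mathbf{f} + \min \mathbf{f})$.

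Plugging this value back, both affine pieces evaluate to $\tfrac{1}{2}(\max \mathbf{f} - \min \mathbf{f})$, which therefore equals $\inf_\lambda \|\mathbf{f}+\lambda\|_\infty = \|\mathbf{f}\|_\star$, yielding the claimed identity. There is no real obstacle here: the argument is a one-line convex analysis of the max of two affine functions of opposite slope, so the "hard part" is only to notice the rewriting via $|a|=\max(a,-a)$ in order to expose the affine structure in $\lambda$.
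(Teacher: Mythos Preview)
Your proof is correct and follows essentially the same approach as the paper: both rewrite $\norm{\Bf+\la}_\infty = \max(\max\Bf+\la,\,-\min\Bf-\la)$ and minimize over $\la$ to find $\la^\star=-\tfrac{1}{2}(\max\Bf+\min\Bf)$. The only cosmetic difference is that the paper invokes the identity $\max(x,y)=\tfrac{1}{2}(x+y+|x-y|)$ to expose the absolute value $|\max\Bf+\min\Bf+2\la|$, whereas you argue directly via the intersection of two affine pieces; the content is identical.
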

\begin{proof}
	We will use the relations $\norm{\Bf}_\infty = \max(\max\Bf, -\min\Bf)$ and $\max(x,y)=\tfrac{1}{2}(x + y + |x-y|)$.%
	Applying those relations to the setting of the Hilbert norm yields for any $\la\in\RR$
	\begin{align*}
		\norm{\Bf + \la}_\infty &=  \max(\max\Bf + \la, -\min\Bf - \la)\\
		&= \tfrac{1}{2}(\max\Bf - \min\Bf + |\max\Bf + \min\Bf + 2\la|).
	\end{align*}
	Since the Hilbert norm is obtained by minimizing over $\la\in\RR$, we see that the minimum is attained at the unique value $\la^\star=\tfrac{1}{2}(\max\Bf + \min\Bf)$.
	Thus the absolute value cancels out and it yields $\norm{\Bf}_\star=\tfrac{1}{2}(\max\Bf - \min\Bf)$.
\end{proof}
We focus now on $\norm{(\Bf,\Bg)}_{\star\star}$.
\begin{lem}\label{lem-starstar-closed-form}
	One has $\norm{(\Bf,\Bg)}_{\star\star} = \norm{\Bf}_\star + \norm{\Bg}_\star + \tfrac{1}{2}|\max\Bf + \min\Bf + \max\Bg + \min\Bg| = \norm{\Bf\oplus\Bg}_\infty$.
\end{lem}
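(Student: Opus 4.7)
The plan is to prove the two equalities separately. For the first equality, I would start from the definition $\norm{(\Bf,\Bg)}_{\star\star} = \min_{\la\in\RR} \norm{\Bf + \la}_\infty + \norm{\Bg-\la}_\infty$ and apply the intermediate identity obtained in the proof of Lemma~\ref{lem-hilbert-closed-form}, namely
\[
\norm{\Bf + \la}_\infty = \tfrac{1}{2}(\max\Bf - \min\Bf) + \tfrac{1}{2}|\max\Bf + \min\Bf + 2\la|,
\]
and its analogue for $\norm{\Bg - \la}_\infty$. Setting $A \triangleq \max\Bf + \min\Bf$, $B \triangleq \max\Bg + \min\Bg$, and $u \triangleq 2\la$, the minimization reduces to $\min_{u\in\RR} |A + u| + |B - u|$.

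The key step here is evaluating this univariate minimization. By the triangle inequality, $|A+u| + |B-u| \geq |(A+u) + (B-u)| = |A+B|$, and equality is attained at $u = -A$ (which makes the first term vanish and the second equal to $|A+B|$). Adding back the $\norm{\Bf}_\star + \norm{\Bg}_\star$ terms yields the first claimed equality.

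For the second equality, I would compute $\norm{\Bf\oplus\Bg}_\infty$ directly. Since $(\Bf_i + \Bg_j)_{i,j}$ ranges over the interval $[\min\Bf + \min\Bg,\,\max\Bf + \max\Bg]$, its sup-norm is
\[
\norm{\Bf\oplus\Bg}_\infty = \max\bigl(\max\Bf + \max\Bg,\, -(\min\Bf + \min\Bg)\bigr).
\]
Applying the identity $\max(x,y) = \tfrac{1}{2}(x + y + |x-y|)$ with $x = \max\Bf + \max\Bg$ and $y = -(\min\Bf + \min\Bg)$, the symmetric part gives $\tfrac{1}{2}(\max\Bf - \min\Bf) + \tfrac{1}{2}(\max\Bg - \min\Bg) = \norm{\Bf}_\star + \norm{\Bg}_\star$, and the absolute-value part gives exactly $\tfrac{1}{2}|A + B|$, matching the middle expression.

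No step looks genuinely hard; the only subtlety is to make sure the minimizer $\la^\star = -A/2$ in the first part is indeed realizable (it is, since $\la$ ranges freely over $\RR$) and to correctly identify the extremal values in the sum $\Bf\oplus\Bg$. Both parts are direct consequences of Lemma~\ref{lem-hilbert-closed-form}'s computation together with the elementary identity $\max(x,y) = \tfrac{1}{2}(x+y+|x-y|)$.
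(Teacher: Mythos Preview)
Your proposal is correct and follows essentially the same route as the paper: both start from the identity $\norm{\Bf+\la}_\infty = \tfrac12(\max\Bf-\min\Bf) + \tfrac12|\max\Bf+\min\Bf+2\la|$ and reduce the first equality to a scalar minimization, and both obtain the second equality by applying $\max(x,y)=\tfrac12(x+y+|x-y|)$ to $\norm{\Bf\oplus\Bg}_\infty$. The one minor difference is that for $\min_u |A+u|+|B-u|=|A+B|$ the paper invokes a separate lemma proved by case analysis, whereas you dispatch it directly with the triangle inequality; your argument is slightly cleaner but otherwise equivalent.
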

\begin{proof}
	The proof reuses the same relations used in the previous lemma. One has for any $\la\in\RR$
	\begin{align*}
		\norm{\Bf + \la}_\infty + \norm{\Bg-\la}_\infty &= \max(\max\Bf + \la,-\min\Bf - \la) + \max(\max\Bg-\la,-\min\Bg + \la)\\
		&= \tfrac{1}{2}(\max\Bf - \min\Bf + |\max\Bf + \min\Bf + 2 \la|) + \tfrac{1}{2}(\max\Bg - \min\Bg + |\max\Bg + \min\Bg - 2 \la|).
	\end{align*}
	Thanks to Lemma~\ref{lem-scalar-solve-starstar}, we have that the minimization in $\la$ attains the following value
	\begin{align*}
		\norm{(\Bf,\Bg)}_{\star\star} &= \tfrac{1}{2}(\max\Bf - \min\Bf) + \tfrac{1}{2}(\max\Bg - \min\Bg) + \tfrac{1}{2}|\max\Bf + \min\Bf + \max\Bg + \min\Bg|.
	\end{align*}
	Reusing Lemma~\ref{lem-hilbert-closed-form} allows to rewrite the first two terms as Hilbert norms.
	To get the last equality with $\norm{\Bf\oplus\Bg}_\infty$, note that $\max(\Bg\oplus\Bg) = \max\Bf + \max\Bg$, and that the above relation reads $max(x,y)$ with $x= \max\Bf + \max\Bg$ and $y=-\min\Bf-\min\Bg$.
\end{proof}
We end with the proof of the Lemma we used in the previous demonstration.
\begin{lem}\label{lem-scalar-solve-starstar}
	For any $(a,b)\in\RR$, one has $\min_{\la\in\RR} |a + \la| + |b - \la| = |a + b|$, which is attained for any $\la\in[\min(-a,b), \max(-a,b)]$.
\end{lem}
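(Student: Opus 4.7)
The plan is to establish the lower bound via the triangle inequality and then characterize the equality set. First, for any $\la\in\RR$, one has
\begin{align*}
    |a+\la| + |b-\la| \geq |(a+\la) + (b-\la)| = |a+b|,
\end{align*}
which gives the inequality $\min_\la |a+\la|+|b-\la| \geq |a+b|$ for free.

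Next, I would identify the set on which equality is attained. Recall that $|x|+|y| = |x+y|$ iff $x$ and $y$ have the same sign (weakly), i.e.\ iff $xy \geq 0$. Applying this with $x = a+\la$ and $y = b-\la$, the equality $|a+\la| + |b-\la| = |a+b|$ holds iff $(a+\la)(b-\la) \geq 0$, i.e.\ iff $\la \geq -a$ and $\la \leq b$, or $\la \leq -a$ and $\la \geq b$. Writing $I \triangleq [\min(-a,b),\,\max(-a,b)]$, this condition is exactly $\la \in I$, which is nonempty as it contains both $-a$ and $b$. Since $I$ is nonempty, the lower bound $|a+b|$ is attained, proving the claim.

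Alternatively, and perhaps more elementarily, one can observe that $h(\la) \triangleq |a+\la| + |b-\la|$ is a convex piecewise linear function with breakpoints at $\la = -a$ and $\la = b$. Outside the interval $I$, both summands have slopes of the same sign (either both $+1$ and $+1$, giving slope $+2$, or both $-1$ and $-1$, giving slope $-2$), whereas on $I$ the two slopes cancel and $h$ is constant. Evaluating $h$ at any endpoint of $I$, say $\la=-a$, gives $h(-a) = 0 + |a+b| = |a+b|$, confirming both the minimum value and the set of minimizers.

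The argument involves no real obstacle; the only care needed is a clean statement of the equality condition for the triangle inequality and the verification that the endpoints $-a,b$ of $I$ do lie in the feasible set (which is automatic since $I$ is defined by the min/max of $-a$ and $b$).
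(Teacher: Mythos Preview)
Your proof is correct. It is, however, a genuinely different route from the paper's. The paper defines $V(\la)=|a+\la|+|b-\la|$ and performs an exhaustive four-case analysis according to the signs of $a+\la$ and $b-\la$, computing $V$ explicitly on each region and checking that the minimum value $|a+b|$ is reached precisely on $[\min(-a,b),\max(-a,b)]$. Your argument short-circuits all of this: the triangle inequality gives the lower bound in one line, and the well-known equality criterion $|x|+|y|=|x+y|\Leftrightarrow xy\geq 0$ immediately yields the set of minimizers. Your alternative piecewise-linear description is essentially a compressed version of the paper's case split. What your approach buys is brevity and clarity; what the paper's approach buys is complete self-containment (no appeal to the equality case of the triangle inequality), at the cost of being more tedious.
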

\begin{proof}
	Given that $a\leq b$, we study all cases depending on the sign of the absolute value.
	We define $V(\la)\triangleq |a + \la| + |b - \la|$.
	\paragraph{Case 1: $(a+\la\geq 0)$ and $(b-\la\geq 0)$.}
	It is equivalent to $\la\in[-a, b]$, which is non-empty when $\min(-a,b) = -a$ and $\max(-a,b) = b$.
	One has $V(\la) = a + \la + b - \la = a + b = |a + b|$ because in this case $-a\leq b$.
	\paragraph{Case 2: $(a+\la\leq 0)$ and $(b-\la\leq 0)$.}
	It is equivalent to $\la\in[b, -a]$, which is non-empty when $\min(-a,b) = b$ and $\max(-a,b) = -a$.
	One has $V(\la) = -a - \la - b + \la = -(a + b)= |a + b|$ because in this case $-a\geq b$.
	\paragraph{Case 3: $(a+\la\geq 0)$ and $(b-\la\leq 0)$.}
	It is equivalent to $\la\in[\max(-a, b), +\infty)$.
	One has $V(\la) = a + \la - b + \la = a - b + 2\la$, which is minimized for $\la=\max(-a, b)$.
	It yields $$V(\la)=a - b + 2\max(-a, b) = a - b + (b - a + |a + b|) = |a + b|.$$
	\paragraph{Case 4: $(a+\la\leq 0)$ and $(b-\la\geq 0)$.}
	It is equivalent to $\la\in](-\infty, \min(-a, b)]$.
	One has $V(\la) = -a - \la + b - \la = b - a - 2\la$, which is minimized at $\la=\min(-a,b)$.
	It yields $$V(\la)=b - a - 2\min(-a, b) = b - a - (b - a - |a + b|) = |a + b|.$$
	
	\paragraph{Conclusion.}
	All in all, we have from all cases altogether that the $\min_{\la\in\RR}V(\la) = |a + b|$, and that any $\la\in[\min(-a,b), \max(-a,b)]$ attains this minimum.
\end{proof}

\subsection{Experiments - Combining Sinkhorn with Anderson acceleration}

Anderson acceleration~\cite{anderson1965iterative} applies to any iterative map of the form $x_{t+1} = T(x_t)$ for $x_t\in\RR^d$.
At a given iterate $x_t$ consists in storing $K$ residuals of the form $u_k=T(x_{t+k}) - x_{t+k}$ for $k=0..K-1$ as a matrix $U \triangleq [u_0, \ldots, u_{K-1}]$, and to find the best interpolation of the residuals which satisfy the following criteria
\begin{align*}
	c^\star\in\arg\min_{\ones^\top c = 1} \norm{Uc}_2,
\end{align*}
where $c\in\RR^K$
Then one defines the next iterate as $x_{t+1} = \sum_{k=0}^{K-1} c_k x_{t+k}$.
Such procedure is known to converge faster to a fixed point than the standard iterations~\cite{scieur2016regularized}.
To ensure the convergence and the well-posedness of $c^\star$, it is common to regularize the problem as
\begin{align*}
	c^\star\in\arg\min_{\ones^\top c = 1} c^\top(U^\top U + r I)c,
\end{align*}
where $r\geq 0$ is the regularization parameter.
In this case we have the following closed form
\begin{align*}
	c^\star_r \triangleq \frac{(U^\top U + r I)^{-1}\ones}{\ones^\top(U^\top U + r I)^{-1}\ones}.
\end{align*}
The interest of Anderson acceleration is that the above extrapolation amounts to invert a small matrix of size $K\times K$.

We provide below an experiment on the estimation of the contraction rate similar to Figure 1.
We take $K=4$ and $r=10^{-7}$.
We test the acceleration on each version of Sinkhorn, and we observe that it yields a faster convergence for all three versions $(\Ff_\epsilon,\Gg_\epsilon,\Hh_\epsilon)$- Sinkhorn.

\begin{figure}[H]
	\centering
	\includegraphics[width=0.4\textwidth]{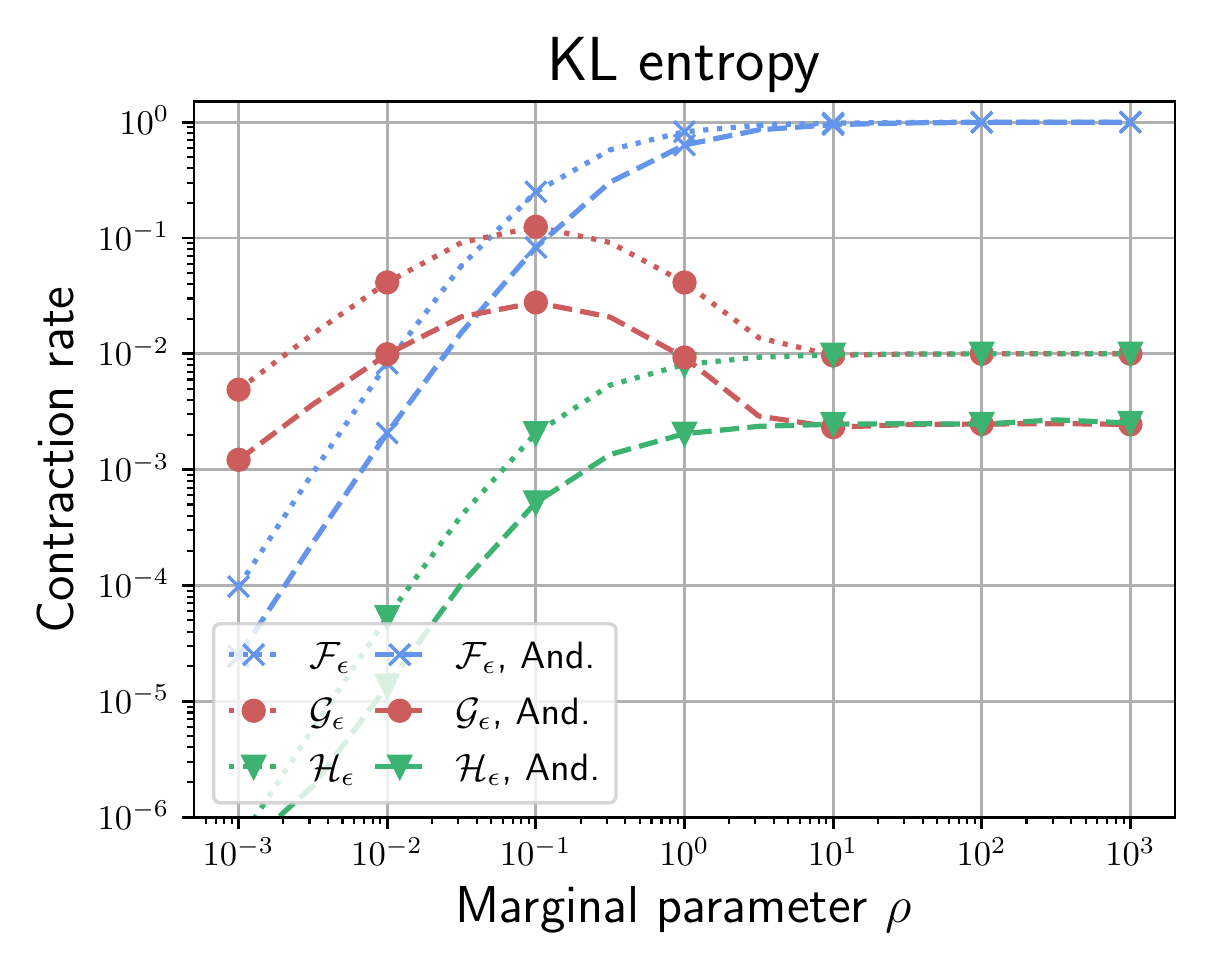}
	\caption{\textit{
			Estimation of the contraction for the Anderson acceleration applied to $(\Ff_\epsilon,\Gg_\epsilon,\Hh_\epsilon)$- Sinkhorn compared with the contraction rate of $(\Ff_\epsilon,\Gg_\epsilon,\Hh_\epsilon)$-Sinkhorn. Dashed lines represent the accelerated version and dotted lines the 'standard' algorithm. We take $K=4$ iterations for Anderson extrapolation, and we regularize with $r=10^{-7}$.}
	}
\end{figure}

\if 0

\subsection{Remarks on translations}

\paragraph{Translation $(\f+\mu, \g+\mu)$.}
In the setting $\epsilon>0$, another transformation is the translation $(\f+\mu, \g+\mu)$ with $\mu\in\RR$, to optimize $\Ff_\epsilon(\f+\mu,\g+\mu)$.
It is not possible in the unregularized setting $\epsilon=0$ because of the dual constraint $\f\oplus\g\leq\C$.
Indeed, satisfying $\f(x)+\g(y)=\C(x,y)$ for at least one pair $(x,y)$ imposes $\mu=0$.
This transformation admits a closed form when $\rho_1=\rho_2=\rho$.
Otherwise a Newton algorithm allows to approximate $\mu$.
\begin{prop}
	When $\rho_1=\rho_2=\rho$ the optimum $\mu^*\in\arg\max_{\mu\in\RR}\Ff_\epsilon(\f+\mu,\g+\mu)$ satisfies
	\begin{align}
	\mu^*(\f,\g)\triangleq \frac{\epsilon\rho}{\epsilon + 2\rho}\log\Bigg[
	\frac{ \dotp{\al}{e^{-\frac{\f}{\rho}}} + \dotp{\be}{e^{-\frac{\g}{\rho}}} }{ 2\dotp{\al\otimes\be}{e^{\tfrac{\f\oplus\g - \C}{\epsilon}}} }
	\Bigg].
	\end{align}
\end{prop}
\todo{Add translation $(\f+ \mu, \g +\nu)$}.

\paragraph{Remark on optimality.}
\todo{Leave 1st phrase + appendix}
It is important to mention that this procedure optimizes $\Gg_\epsilon(\f,\g,\la)$, but differs from optimizing $\Hh_\epsilon(\f,\g)$.
Indeed, when $\rho_1=\rho_2=\rho$, the first order condition of $\Hh_\epsilon(\f,\g_t)$ w.r.t. $\f$ for fixed $\g_t$ reads
\begin{align}
e^{\frac{\f}{\epsilon}}\dotp{\be}{e^{\tfrac{\g_t - \C}{\epsilon}}} = e^{-\frac{\f}{\rho}}\sqrt{\frac{ \dotp{\be}{e^{-\g_t / \rho}} }{ \dotp{\al}{e^{-\f / \rho}} }}.\label{eq-optim-sink-v1}
\end{align}
Meanwhile, the optimality of $\Gg_\epsilon(\f,\g_t,\la_t)$ w.r.t. $\f$ for fixed $(\g_t,\la_t)$ with $\la_t=\la^*(\f_t,\g_t)$ reads
\begin{align}
e^{\frac{\f}{\epsilon}}\dotp{\be}{e^{\tfrac{\g_t - \C}{\epsilon}}} = e^{\tfrac{-\f-\la_t}{\rho}} = e^{-\frac{\f}{\rho}}\sqrt{\frac{ \dotp{\be}{e^{-\g_t / \rho}} }{ \dotp{\al}{e^{-\f_t / \rho}} }}.\label{eq-optim-sink-v2}
\end{align}
Note that Equation~\eqref{eq-optim-sink-v1} does not admit a closed form due to the dependence in $\f$ of the integral $\dotp{\al}{e^{-\f / \rho}}$.
Contrary to this, Equation~\eqref{eq-optim-sink-v2} admits a closed form for $\f$ because it uses the previously computed potential $\f_t$.
It is for this reason that we seek to optimize $\Gg_\epsilon$.

\paragraph{Considering other translations.}
\todo{Remove this paragraph according to Pierre.}
\todo{Numeric, look other translations in regime $\epsilon\geq\rho$}
Without a quantitative proof of convergence, we considered other translations of potentials to accelerate Sinkhorn.
One can optimize $\Ff_\epsilon(\f+\nu,\g+\nu)$ for $\nu\in\RR$, which admits a closed form when $\rho_1=\rho_2$.
Another option is to perform both translations by optimizing $\Ff_\epsilon(\f + \mu + \nu, \g - \mu + \nu)$ for $(\mu,\nu)\in\RR^2$.
We compared those three translations to Sinkhorn algorithm in the regime $\epsilon\leq\rho$, and observed that translations $(\f+\nu,\g+\nu)$ yield a slower convergence than $(\f+\mu,\g-\mu)$.
Doing both translations yields no acceleration compared to $(\f+\mu,\g-\mu)$.
Thus we emphasize that our proposal seems to be the most relevant idea, see Appendix for formulas and experimental details.

\fi
\section{Appendix of Section 4 - Frank-Wolfe solver in 1-D}
\label{seq-supp-fw}

\subsection{Details on the Pairwise Frank-Wolfe algorithm}

Frank-Wolfe or conditional gradient methods~\cite{frank1956algorithm} aims at minimizing $\min_{x\in conv(\Aa)} \Ff(x)$, where $\Aa$ is called the set of atoms.
To do so one can minimize a linear minimization oracle (LMO) at the current iterate $x_t$, which reads $v_t\in\arg\min_{v\in\Aa} \dotp{\nabla\Ff(x_t)}{v}$.
The next iterate $x_{t+1}$ is updated as a convex combination of $(x_t,v_t)$ via line-search $\gamma_t\in\arg\min_{\gamma\in[0,1]} \Ff(x_t + \gamma d_t)$, where $d_t=v_t-x_t$ is the descent direction.
It is also possible to skip the line-search and set $\gamma_t=\tfrac{2}{2+t}$, which gives a $O(\tfrac{1}{t})$ approximation rate of the optimizer for gradient-Lipschitz functions.

We also consider in this paper the paiwise FW (PFW) variant~\cite{lacoste2015global}.
We store at each iterate the atom $v_t$ and its weight $w_t\geq 0$ in the convex combination as a dictionnary $\Vv_t$, i.e. at time $t$ one has $x_t = \sum_{k=1}^t w_k v_k$, and $\sum_{k=1}^t w_k = 1$.
What changes is the descent direction in the linesearch $d_t=v_t-s_{t^\star}$ where $s_{t^\star}\in\arg\max_{s\in\Vv_t}\dotp{\nabla\Ff(x_t)}{s}$.
The linesearch seeks $\gamma\in[0,w_{t^\star}]$ instead of $[0,1]$ to ensure that $x_{t+1}=x_t + \gamma d_t$ remains a convex combination.
One can interpret this variant as removing previous atoms which became irrelevant to replace them with more optimal ones.
There is an affine-invariant analysis of this variant in~\cite{lacoste2015global} which ensures linear convergence under milder assumptions than the conditions of FW with step $\gamma_t=\tfrac{2}{2+t}$.

\subsection{Additional figures on the comparison FW v.s. Sinkhorn}

We provide below a Figure which is in the same setting as Figure 4, except that the value is set to $\rho=10^{-1}$ and $\rho=10$.
The results are represented in the Figure below.
\begin{figure*}[h!]
	\centering
	\begin{tabular}{c@{}c@{}c}
		{\includegraphics[width=0.3\linewidth]{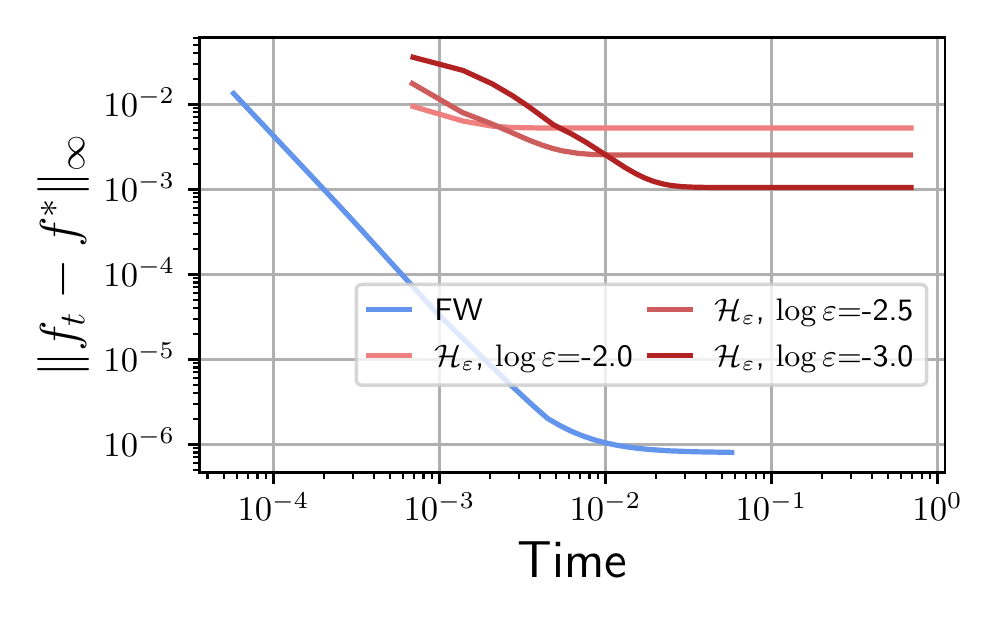}} &
		{\includegraphics[width=0.3\linewidth]{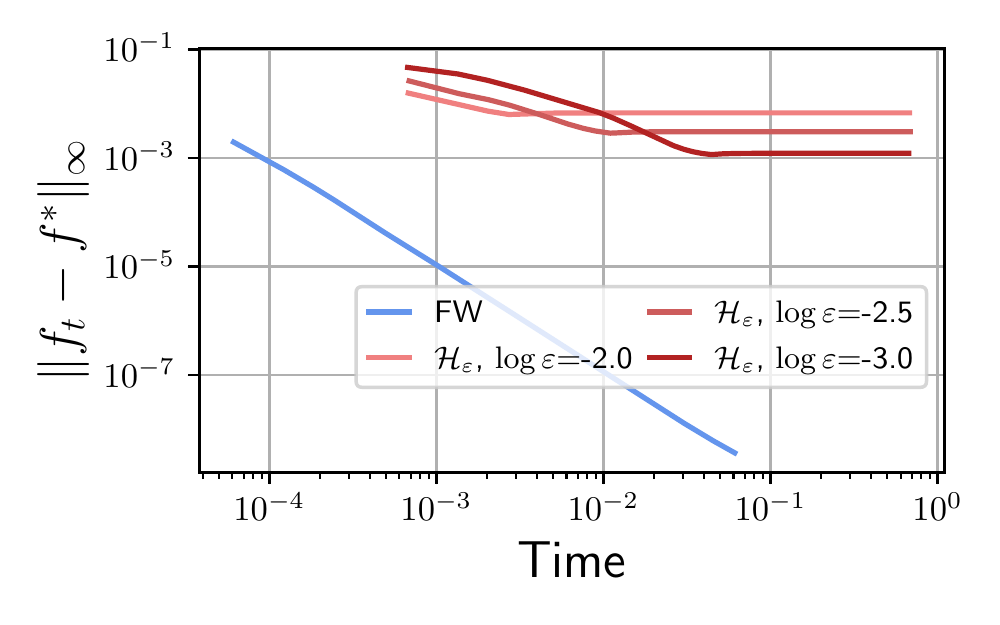}} &
		{\includegraphics[width=0.3\linewidth]{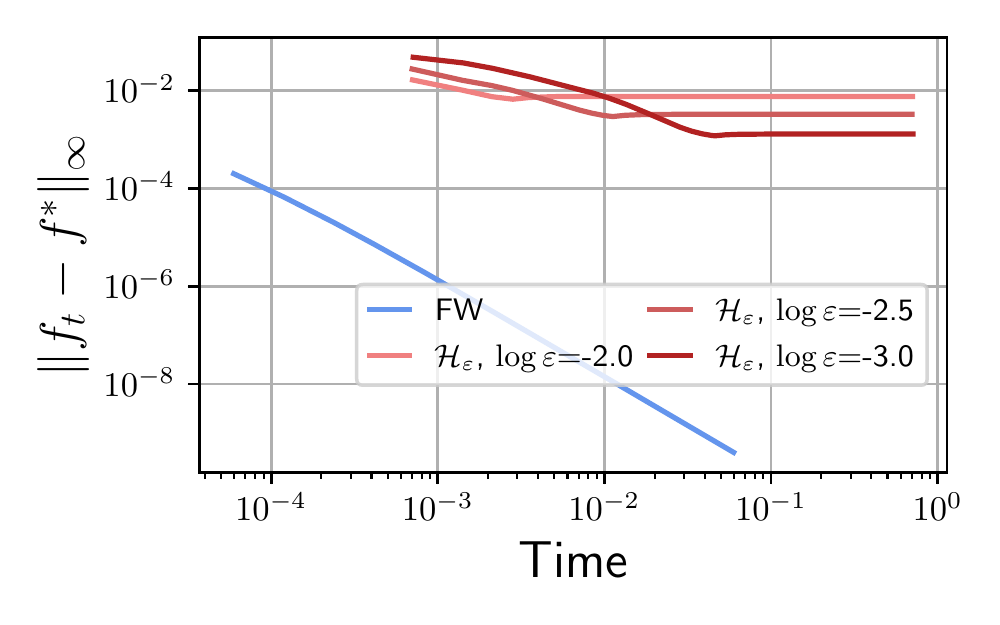}}
	\end{tabular}
	\caption{
		\textit{
			Same experiments as in Figure 4. The center plot is Figure 4 when $\rho=1$, while the left figure sets $\rho=10^{-1}$ and the right one sets $\rho=10$.
		} 
	}
\end{figure*}

\section{Appendix of Section 5 - Barycenters}
\label{seq-supp-barycenter}

\subsection{Proof of Proposition 6}
We provide in this section a different proof than that of the paper. It consists in completely rederiving the proof of~\cite{agueh-2011} for our functional $\UW$ which has an extra $\KL$ penalty term.

\begin{proof}
	First note that both problems have minima which are attained. Indeed, both problems admit finite values since respectively $\be=\al_1$ and $\ga=\al_1\otimes\ldots\otimes\al_K$ are feasible. 
	We can assume that the optimal plans have bounded mass.
	Assume for instance that it is not the case for the barycenter problem
	 Then there exists a sequence $\be_t$ approaching the infimum and such that $m(\be_t)\rightarrow\infty$, which would contradict the finiteness of the functional value. 
	Thus, we consider without loss of generality that $m(\be)<M_1$ and $m(\ga)< M_2$.
	By Banach-Alaoglu theorem, $\be$ and $\ga$ are in compact sets. Taking a sequence approaching the infimum, one can extract a converging subsequence which attains the minimum, hence the existence of minimizers.

	We prove now that the multimarginal problem upper-bounds the barycenter problem. 
	Take $\ga$ optimal for the multimarginal problem.
	Define the canonical projection $p_k$ such that $p_k(x_1,\ldots,x_K) \triangleq x_k$, and $\ga^{(k)} \triangleq (p_k, B_\la)_\sharp\ga$.
	Note that all $\ga^{(k)}$ have the same second marginal $\tbe\triangleq B_{\la\sharp}\ga$, thus they are feasible for $\UW(\al_k,\tbe)$, and we get
	\begin{align*}
	\UW(\al_k,\tbe)&\leq \dotp{\ga^{(k)}}{\C} + \rho\KL(\ga^{(k)}_1 | \al_k)\\
	&= \dotp{\ga}{\C[x_k, B_\la(x_1,\ldots,x_K)]} + \rho\KL(\ga_k | \al_k)
	\end{align*}
	Thus by summing over $k$, we get
	\begin{align*}
	\sum_k \la_k \UW(\al_k,\tbe) &\leq \sum_k \la_k\Bigg[  \dotp{\ga}{\C[x_k, B_\la(x_1,\ldots,x_K)]} + \rho\KL(\ga_k | \al_k)  \Bigg]\\
	&= \dotp{\ga}{\sum_k \la_k\C[x_k,  B_\la(x_1,\ldots,x_K)]} + \sum_k \la_k\rho\KL(\ga_k | \al_k)\\
	&= \dotp{\ga}{\Cc} + \sum_k \la_k\rho\KL(\ga_k | \al_k).
	\end{align*}
	Hence the upper-bound on the barycenter problem. 
	
	Now prove the converse inequality. Consider the optimal barycenter $\be^*$, and write $\pi^{(k)}(x_k, z)$ the optimal plan for $\UW(\al_k,\be^*)$.
	Note that all $(\pi^{(k)})$ have the same second marginal $\be^*$. 
	It allows to define the gluing of all plans along $\be^*$ which is a $(K+1)$-dimensional tensor, noted $\eta(x_1,\ldots,x_K,z)$. 
	Write $\teta$ its marginal/summation over the variable $z$. the plan $\teta(x_1,\ldots,x_K)$ is feasible for the multimarginal problem. 
	It yields
	\begin{align*}
	(2) &\leq \dotp{\teta}{\Cc} + \sum_k \la_k \rho\KL(\teta_k | \al_k)\\
	&= \dotp{\eta}{\Cc} + \sum_k \la_k \rho\KL(\eta_k | \al_k)\\
	&\leq \dotp{\eta(x_1,\ldots,x_K,z)}{\sum_k\la_k\C(x_k, z)} + \sum_k \la_k \rho\KL(\eta_k | \al_k)\\
	&= \sum_k \la_k \Bigg[  \dotp{\pi^{(k)}(x_k, z)}{\C(x_k, z)} +  \rho\KL(\pi^{(k)}_1 | \al_k) \Bigg].
	\end{align*}
	The last equality holds by construction of the gluing plan $\eta$, whose marginals with variables $(x_k,z)$ is $\pi^{(k)}$, which implies $\teta_k=\eta_k=\pi^{(k)}_1$. 
	The last line is exactly the value of the barycenter problem for the barycenter $\be^*$, which shows that it upper-bounds the multimarginal problem.
	
	Eventually, we have that both formulation yield the same value. 
	Reusing the first part of the proof, we have that the measure $\tbe = B_{\la\sharp}\ga$ yields the same value for both problems, thus it is an optimizer for the barycenter problem, which ends the proof.
\end{proof}

\subsection{Correctness of the OT multimarginal algorithm}
As explained, solving the 1D barycenter problem is equivalent to solving a balanced, 1D multimarginal transport problem with respect to the barycentric cost. 
For the Euclidean distance squared, it is well-known in the case of $K=2$ marginals that the barycenter is characterized by its generalized inverse cumulative distribution function (icdf) equal to the mean of the icdf of the corresponding marginals. This property can be extended in 1D to multimarginal costs satisfying 
a submodularity condition as defined in \cite{bach2019submodular, carlier2003class}.  Under this assumption, it is shown in these papers that the optimal plan $\gamma$, as defined in the corresponding multimarginal problem, is given by the distribution of $(F_{\alpha_1}^{-1}(U),\ldots,F_{\alpha_k}^{-1}(U))$ where $U$ is a uniform random variable on $[0,1]$ and $F^{-1}_{\mu}$ denotes the icdf of $\mu$ (see \cite{bach2019submodular} for the definition). 
Thus, the optimal primal variable $\gamma$ is explicitly parametrized by $t \in [0,1]$. 
It is direct to prove that the algorithm computes the optimal plan. The optimal dual variables are obtained by applying the primal-dual constraint which reads
\begin{equation}
\sum_{i = 1}^K f_i(x_{k_i}^i) = \Cc(x_{k_1}^1,\ldots,x_{k_K}^K)\, ,\label{EqPrimalDual}
\end{equation} 
for every $(x_{k_i}^i)_{i =1,\ldots,K}$ in the support of the optimal plan. To initialize the dual variables, one has to remark that the multimarginal OT problem is invariant by the following translations $f_i \to f_i + \lambda_i$ with $\sum_{i = 1}^K \lambda_i = 0$. This implies that one can set the value of the last $K-1$ potentials to $0$ and initialize $f_1(x_1^1) $  with the primal dual constraint~\eqref{EqPrimalDual} which gives $f_k(x_1^K) = \Cc(x_1^1,\ldots,x_1^K)$.
The standard iteration of the algorithm consists in updating the current point in the optimal plan (indexed by $t$) to the next point in the support of the plan $\ga$ and update the corresponding dual potential accordingly to the primal-dual constraint. 
Note that the primal-dual equality is satisfied on the support of the plan $\ga$ and on the support of $\al_1\otimes\ldots\otimes\al_K$ the inequality constraint
\begin{equation}
\sum_{i = 1}^K f_i(x_i) \leq \Cc(x_1,\ldots,x_K)\,
\end{equation} 
is satisfied, in other terms the potentials $(\f_1,\ldots,\f_K)$ is dual-feasible. This fact is guaranteed by a submodularity condition on the cost $\Cc$, and it is not satisfied for a general cost (see~\cite[Proposition 4]{bach2019submodular}).

\subsection{Proof of Proposition 8}
We provide here a more general proof where each marginal $\al_k$ is penalized by $\rho_k\KL$. We retrieve the result of the paper for the particular setting $\rho_k = \omega_k\rho$ and $\sum_k \omega_k = 1$.
We define $\rho_{tot}\triangleq \sum_k \rho_k$.

\begin{proof}
	First note that $\sum_k \la_k = 0$ implies that $(\f_1+\la_1)\oplus\ldots\oplus(\f_K + \la_K)\leq\Cc$. 
	In what follows we replace the parameterization $(\la_1,\ldots,\la_K)$ by $(\la_1 -\La,\ldots,\la_K - \La)$ where $\La(\la_1,\ldots,\la_K) \triangleq \tfrac{1}{K}\sum_k \la_k$, such that the constraint $\sum_k \la_k=0$ is always satisfied.
	The first order optimality condition on $\Ff(\f_1 + \la_1 - \La,\ldots,\f_K + \la_K -\La)$ w.r.t the coordinate $\la_i$ reads
	\begin{align*}
	&\dotp{\al_i}{e^{-(\f_i + \la_i  -\La) / \rho_i}} - \sum_{k=1}^K \tfrac{1}{K}\dotp{\al_k}{e^{-(\f_k +\la_k - \La)/ \rho_k}} = 0\\
	\Leftrightarrow & \dotp{\al_i}{e^{-(\f_i + \la_i  -\La) / \rho_i}} = \sum_{k=1}^K \tfrac{1}{K}\dotp{\al_k}{e^{-(\f_k +\la_k - \La)/ \rho_k}}\\
	\Leftrightarrow & \frac{-\la_i + \La}{\rho_i} + \log\dotp{\al_i}{e^{-\f_i  / \rho_i}} = \log\Bigg[\sum_{k=1}^K \tfrac{1}{K}\dotp{\al_k}{e^{-(\f_k +\la_k - \La)/ \rho_k}}\Bigg]\triangleq v\\
	\Leftrightarrow &-\la_i + \La + \rho_i \log\dotp{\al_i}{e^{-\f_i  / \rho_i}} = \rho_i\cdot v.
	\end{align*}
	Summing those optimiality equations for all $i$, one has $\sum_k (\la_k - \La) = 0$, thus yielding
	\eq{
		\rho_{tot}\cdot v = \sum_k \rho_k \log\dotp{\al_k}{e^{-\f_k / \rho_k}}.
	}
	Hence we get
	\eq{
		v = \frac{1}{\rho_{tot}} \sum_k \rho_k \log\dotp{\al_k}{e^{-\f_k / \rho_k}}.
	}
	Reusing the optimality condition, we set
	\eq{
		\la_i = \rho_i\log\dotp{\al_i}{e^{-\f_i / \rho_i}} - \frac{\rho_i}{\rho_{tot}} \sum_{k=1}^K \rho_k\log\dotp{\al_k}{e^{-\f_k / \rho_k}}.
	}
	Note that this formula verifies $K\La = \sum_k \la_k = 0$. Setting $\tal_k = e^{-(f_k + \la_k) / \rho_k}\al_k$, one has
	\eq{
		m(\tal_k) = \exp\Big(- \frac{1}{\rho_{tot}} \sum_k \rho_k \log\dotp{\al_k}{e^{-\f_k / \rho_k}}\Big).
	}
	Hence the equality of masses $m(\tal_i) = m(\tal_j)$ for any $(i,j)$.
\end{proof}

\end{document}